\documentclass[11pt]{amsart}
\usepackage{amssymb,amsmath, amsthm, amsfonts, tikz-cd}

\usepackage{graphicx}
\usepackage{listings}
\usepackage[margin=1in]{geometry}
\usepackage{lstautogobble}
\usepackage{enumerate}
\usepackage[shortlabels]{enumitem}
\usepackage{thmtools}
\usepackage{thm-restate}
\usepackage{amsthm}
\usepackage{verbatim}

\usepackage{mathtools}
\usepackage{physics}
\usepackage{color}
\usepackage{hyperref}
\usepackage[capitalise]{cleveref}
\usepackage[bottom]{footmisc}
\crefformat{equation}{(#2#1#3)}

\usepackage{float}
\restylefloat{table}

\usepackage{mathrsfs}
\setlist{  
  listparindent=\parindent,
  parsep=0pt,
}

\theoremstyle{plain}
\newtheorem{thm}{Theorem}[section]
\newtheorem{prop}[thm]{Proposition}
\newtheorem{lemma}[thm]{Lemma}
\newtheorem{cor}[thm]{Corollary}

\theoremstyle{definition}

\newtheorem{remark}[thm]{Remark}

\newtheorem{ques}{Question}[section]

%Cleverref
\Crefname{thm}{Theorem}{Theorems}
\Crefname{prop}{Proposition}{Propositions}

\numberwithin{equation}{section} %Equation numbering

    % Physical Review Letters

\DeclarePairedDelimiter{\paren}{\lparen}{\rparen}

\DeclarePairedDelimiter{\jp}{\langle}{\rangle}

\DeclareMathOperator{\supp}{supp}

\newcommand{\M}{{\mathcal{M}}}

\newcommand{\p}{{\partial}}

%Colors

%Variational derivative
\renewcommand{\d}{\delta}

%Lie algebra operators

%Algebraic Structures
\newcommand{\R}{{\mathbb{R}}}
\newcommand{\C}{{\mathbb{C}}}
\newcommand{\N}{{\mathbb{N}}}

\newcommand{\T}{{\mathbb{T}}}
\newcommand{\g}{{\mathsf{g}}}
\newcommand{\G}{{\mathsf{G}}}

\newcommand{\Sc}{{\mathcal{S}}}

\renewcommand{\M}{{\mathbb{M}}}
\newcommand{\I}{\mathbb{I}}

\newcommand{\ga}{\gamma}
\newcommand{\X}{\mathsf{X}}

\newcommand{\tl}{\tilde}

\newcommand{\D}{\Delta}

\newcommand{\ph}{\phantom{=}}
\newcommand{\nn}{\nonumber}

\newcommand{\ol}{\overline}

\newcommand{\ep}{\epsilon}
\newcommand{\vep}{\varepsilon}
\newcommand{\al}{\alpha}
\newcommand{\be}{\beta}
\newcommand{\ka}{\kappa}
\newcommand{\la}{\lambda}

\newcommand{\Tc}{\mathcal{T}}

\newcommand{\indic}{\mathbf{1}}

%Operators

%Symmetrization

%Fourier Transform

\newcommand{\F}{{\mathcal{F}}}
\newcommand{\Dm}{|\nabla|}

\renewcommand{\P}{\mathcal{P}}

\newcommand{\om}{\omega}

\let\div\relax
\DeclareMathOperator{\div}{\mathrm{div}}

%Term

\def\XXint#1#2#3{{\setbox0=\hbox{$#1{#2#3}{\int}$ }
\vcenter{\hbox{$#2#3$ }}\kern-.6\wd0}}

\setcounter{tocdepth}{3}% to get subsubsections in toc
 
\let\oldtocsection=\tocsection
 
\let\oldtocsubsection=\tocsubsection
 
\let\oldtocsubsubsection=\tocsubsubsection
 
\renewcommand{\tocsection}[2]{\hspace{0em}\oldtocsection{#1}{#2}}
\renewcommand{\tocsubsection}[2]{\hspace{1em}\oldtocsubsection{#1}{#2}}
\renewcommand{\tocsubsubsection}[2]{\hspace{2em}\oldtocsubsubsection{#1}{#2}}

\title[Global flows with random diffusion]{Global solutions of aggregation equations and other flows with random diffusion}

\author[M. Rosenzweig]{Matthew Rosenzweig}
\email{mrosenzw@mit.edu}
\author[G. Staffilani]{Gigliola Staffilani}
\email{gigliola@math.mit.edu}
\thanks{M.R. and G.S. are supported by the Simons Foundation through the Simons Collaboration on Wave Turbulence and by NSF grant DMS-2052651.}

\begin{document}
\begin{abstract}
Aggregation equations, such as the parabolic-elliptic Patlak-Keller-Segel model, are known to have an optimal threshold for global existence vs. finite-time blow-up. In particular, if the diffusion is absent, then all smooth solutions with finite second moment can exist only locally in time. Nevertheless, one can ask whether global existence can be restored by adding a suitable noise to the equation, so that the dynamics are now stochastic. Inspired by the work of Buckmaster et al. \cite{BNSW2020} showing that, with high probability, the inviscid SQG equation with random diffusion has global classical solutions, we investigate whether suitable random diffusion can restore global existence for a large class of active scalar equations in arbitrary dimension with possibly singular velocity fields. This class includes Hamiltonian flows, such as the SQG equation and its generalizations, and gradient flows, such as those arising in aggregation models. For this class, we show global existence of solutions in Gevrey-type Fourier-Lebesgue spaces with quantifiable high probability.
\end{abstract}

\maketitle

\section{Introduction}\label{sec:intro}
\subsection{Motivation}\label{ssec:intromot}
To motivate the problem addressed in this article, let us consider the two-dimensional aggregation-diffusion equation
\begin{equation}\label{eq:KS}
\begin{cases}
\p_t\theta = \div(\theta\nabla\g\ast\theta) + \nu\D\theta \\
\theta|_{t=0} = \theta^0
\end{cases}
\qquad (t,x) \in \R_+\times\R^2.
\end{equation}
Here, $\g(x) = \frac{1}{2\pi}\ln|x|$ is the Newtonian potential on $\R^2$ and $\nu\geq 0$ is the diffusion strength. It is natural to assume that $\theta^0\geq 0$ and consider solutions $\theta\geq 0$, as $\theta$ is supposed to represent a density. If $\nu >0$, then equation \eqref{eq:KS} is known as the parabolic-elliptic Patlak-Keller-Segel (PKS) equation, which is a model for the aggregation of cells by chemotaxis \cite{Patlak1953, KS1970, Nanjundiah1973}. If $\nu=0$, then the equation is the gradient flow of the Newtonian energy with respect to the 2-Wasserstein metric. The equation has been studied as a model for the evolution of vortex densities in superconductors \cite{E1994, CRS1996} and as a model for adhesion dynamics \cite{NPS2001, Poupaud2002}.  

It is a straightforward calculation that any smooth solution to \eqref{eq:KS} conserves mass, so we can unambiguously write $M=\int_{\R^2}\theta(x)dx = \int_{\R^2}\theta^0(x)dx$. Suppose that $\theta$ is a solution to \eqref{eq:KS} with finite second moment $\int_{\R^2}|x|^2\theta^t(x)dx$. Evidently this quantity is strictly positive if $\theta^t$ is not identically zero. Using integration by parts, one computes
\begin{equation}
\frac{d}{dt}\int_{\R^2}|x|^2\theta^t(x)dx = -\frac{M^2}{2\pi} + 4\nu M = \frac{M}{2\pi}\paren*{8\pi\nu-M}.
\end{equation}
Thus, if $M>8\pi\nu$, then the second moment is strictly decreasing at a linear rate. Since the second moment is nonnegative, this implies that the maximal time of existence for $\theta$ is finite. In particular, we see that if $\nu=0$, so there is no diffusion, then any nonzero, sufficiently localized classical solution to \eqref{eq:KS} must have finite lifespan \cite{JL1992}. In fact, for initial datum in $L^1$, one has a unique, global mild solution to \eqref{eq:KS} if and only if $M\leq 8\pi\nu$ \cite{Wei2018}. For the asymptotic behavior of solutions, we refer to \cite{BDP2006, CD2014} ($M<8\pi\nu$), \cite{BCM2008, GM2018} ($M=8\pi\nu$), and \cite{Velazquez2002, Velazquez2004i, Velazquez2004ii} ($M>8\pi\nu$), and references therein. In the case $\nu=0$, one has a sharp bound for the time of existence for compactly supported $L^\infty$ weak solutions to \eqref{eq:KS}, which are necessarily unique, as well as exact solutions that provide an explicit example of finite-time collapse to a nontrivial measure \cite{BLL2012}.

For the deterministic dynamics of equation \eqref{eq:KS}, we see that global existence is a nonstarter for classical solutions if the diffusion is too weak relative to the size of the initial data. But in the past two decades, there has been intense research activity on understanding how adding some noise structure (in varying forms) to deterministic equations can impact the behavior of solutions. A small, non-exhaustive sample of this research is given by \cite{dBD2002, dBD2005, FGP2010, FGP2011, DT2011, Flandoli2011, GhV2014, CG2015, BFGM2019, BNSW2020, FL2021} and references therein. Concerning equations of the form \eqref{eq:KS}, Flandoli et al. \cite{FGL2021} have shown that blow-up is \emph{delayed} in a 3D version of \eqref{eq:KS} with positive $\nu$ on $\T^d$ by adding a suitable multiplicative noise of transport type. Misiats et al. \cite{MST2021} have shown that some choices of random perturbations of equation \eqref{eq:KS} with $\nu>0$ lead to global solutions for small-mass initial data, while other choices lead to finite-time blow-up with positive probability for all initial data .

To the best of our knowledge, prior works have not shown that noise prevents finite-time blow-up, in particular for the case $\nu=0$ when all smooth, sufficiently localized solutions necessarily blow up in finite time.\footnote{In the interests of completeness, we also mention that several works (e.g. \cite{KX2016, BH2017, IXZ2021}) have investigated the suppression of finite-time blow-up in the Patlak-Keller-Segel equation by \emph{deterministic} perturbations of convective type.} Recently, the second author together with Buckmaster et al. \cite{BNSW2020} showed that adding \emph{random diffusion} leads to global solutions with positive probability for the invsicid surface quasi-geostrophic (SQG) equation with Gevrey-type initial data. Unlike the equation \eqref{eq:KS}, inviscid SQG is a Hamiltonian flow, and the long-time dynamics of classical solutions is still unresolved. In light of this result, it is natural to ask if random diffusion may somehow improve the existence theory for the equation \eqref{eq:KS}, for which solutions a priori behave very differently. Thus, we pose the following question, which the present article seeks to answer.

\begin{ques}\label{ques}
Can one restore global existence of sufficiently regular solutions to \eqref{eq:KS} by adding a suitable \emph{random diffusion}?
\end{ques}

\subsection{Problem formulation}\label{ssec:intropf}
In order to investigate the regularizing effect of random diffusion and answer \cref{ques}, let us start from more general deterministic equations of the form
\begin{equation}\label{eq:pde}
\begin{cases}
\p_t\theta + \div\paren*{\theta\M\nabla\g\ast\theta} = 0 \\
\theta|_{t=0} = \theta^0
\end{cases}
\qquad (t,x) \in \R\times\R^d.
\end{equation}
One could also include a diffusion term $-\Dm^\la\theta$, for $\la>0$, in the right-hand side (see \cref{rem:diffu} below), but we will not do so here. Above, $\M$ is a $d\times d$ constant matrix. There are several meaningful choices for $\M$. For instance, if we choose $\M$ to be $-\I$, then we get gradient flows. While if we choose $\M$ to be antisymmetric,\footnote{This case is limited to dimensions $d\geq 2$, since there is no antisymmetric matrix (i.e. scalar) in dimension 1.} then we obtain conservative/Hamiltonian flows. We assume that $\g\in\Sc'(\R^d)$ is a tempered distribution, such that the Fourier transform of $\nabla\g$ is locally integrable and satisfies the bound $|\xi\hat{\g}(\xi)| \lesssim |\xi|^{1-\ga}$ for some $0<\ga <d+1$. The model case is when $\g$ is a log or Riesz potential according to the rule
\begin{equation}\label{eq:gmodel}
\pm\begin{cases}
-\log |x|, & {\ga = d} \\
|x|^{\ga-d}, & {\ga \in (0,d+1)\setminus\{d\}}.
\end{cases}
\end{equation}
The choice of sign determines whether the potential is repulsive ($+$) or attractive ($-$). When $\gamma=2$, \eqref{eq:gmodel} is a constant multiple of the Coulomb/Newtonian potential. We refer to the ranges $\ga>2$ and $\ga<2$ as sub-Coulombic and super-Coulombic, respectively.

The general equation \eqref{eq:pde} encompasses a wide class of physical models. Focusing first on the conservative case, in which $\M$ is antisymmetric, the most notable examples are in dimension 2. If $\M$ is rotation by $\frac{\pi}{2}$ and $\g(x)=-\frac{1}{2\pi}\log|x|$ is the Coulomb potential, then \eqref{eq:pde} becomes the incompressible Euler vorticity equation (for instance, see \cite[Section 1.2] {MP2012book} or \cite[Chapter 2]{MB2002}). If $\g(x)=C|x|^{-1}$, then equation \eqref{eq:pde} becomes the inviscid SQG equation, which models the motion of a rotating stratified fluid with small Rosby and Froude numbers in which potential vorticity is conserved \cite{Pedlosky2013, CMT1994, HPGS1995, Resnick1995}. More generally, choosing $\g(x) = C_\ga |x|^{\ga-2}$, for $0<\ga<2$, leads to the generalized SQG (gSQG) family of equations \cite{PHS1994spec, CCCGW2012}, for which the Euler vorticity equation is the $\ga\rightarrow 2^-$ limit. While global-well posedness is known for classical \cite{Wolibner1933, Holder1933} and weak \cite{Yudovich1963} solutions to the Euler case, the global existence of smooth solutions to the gSQG equation is a major open problem---it is only known if one adds suitably strong diffusion to \eqref{eq:pde} (e.g. see \cite{CW1999QG, KNV2007gwp, CV2010, CV2012nmp}). We refer the reader to \cite{CMT1994, Resnick1995, CF2002, Gancedo2008, BSV2019, CGI2019, BCCK2020, GP2021gsqg, HK2021} and references therein for more information on the well-posedness and long-time dynamics of the gSQG equation.

In the gradient-flow case, in which $\M=-\I$, equation \eqref{eq:pde} has been studied for several applications in addition to the aforementioned ones of adhesion dynamics, chemotaxis, and vortices in superconductors. To name a few: materials science \cite{HP2006}, cooperative control \cite{GP2003}, granular flow \cite{BCP1997, BCCP1998, Toscani2000, CMV2006}, phase segregation in lattice matter models \cite{GL1997, GL1998, GLM2000}, and swarming models \cite{MEk1999, MEkBS2003, TB2004, TBL2006}. Several works have focused on the well-posedness and long-time dynamics. We recount some of the results for the model interaction \eqref{eq:gmodel}, which is sometimes called a fractional porous medium equation. In particular, in the repulsive case $\ga=2$, global existence, uniqueness, and asymptotic behavior of nonnegative classical and $L^\infty$ weak solutions are known \cite{LZ2000, AS2008, BLL2012, SV2014}. The case $2<\ga < d+1$ is easier and follows by the same arguments \cite[Section 4]{CCH2014} (see also \cite{BLR2011} for an $L^p$ well-posedness result). For $0<\ga<2$, local well-posedness of nonnegative classical solutions is known \cite{CJ2021} and global existence, regularity, and asymptotic behavior of certain nonnegative weak solutions are known \cite{CV2011, CV2011asy, CSV2013, CV2015, BIK2015, CHSV2015, LMS2018}. To our knowledge, these weak solutions are only known to be unique if $d=1$ \cite{BKM2010}. It is also an open problem whether classical solutions are global if $0<\ga<2$. If one allows for mixed-sign solutions, then the repulsive and attractive equations are equivalent by multiplication by $-1$. \cite{BLL2012} has established well-posedness, in particular maximal time of existence, for  compactly supported classical and $L^\infty$ weak solutions in the $\ga=2$ case. In particular, nonnegative classical and $L^\infty$ weak solutions in the $\ga=2$ case are known to blow up in finite time, as remarked at the beginning of the introduction. \cite{MZ2005} has shown the existence of global renormalized solutions in the sense of DiPerna-Lions \cite{DL1989ode, DL1989b}. We also mention the works \cite{MZ2005, AMS2011, Mainini2012} for an equation arising in vortex superconductivity, which reduces to the repulsive $\ga=2$ case of equation \eqref{eq:pde} when one considers nonnegative solutions.

As a unifying perspective, the equation \eqref{eq:pde} may be viewed as an effective description of first-order mean-field dynamics of the form
\begin{equation}
\begin{cases}
\dot{x}_i^t = \displaystyle \frac{1}{N}\sum_{1\leq j\leq N : j\neq i} \M\nabla\g(x_i^t-x_j^t) \\
x_i^t|_{t=0} = x_i^0
\end{cases}
\qquad i\in\{1,\ldots,N\}
\end{equation}
in the limit as the number of particles $N\rightarrow\infty$. The mathematical validity of this description has been actively studied over the years \cite{Dobrushin1979, Sznitman1991, Hauray2009, CCH2014, Jab2014, Golse2016ln, Duerinckx2016, JW2018, BO2019, BJW2019edp, Serfaty2020, NRS2021}.

Inspired by the aforementioned work of Buckmaster et al. \cite{BNSW2020}, which in turn was inspired by earlier work of Glatt-Holtz and Vicol \cite{GhV2014}, we propose adding a random diffusion term to \eqref{eq:pde} by considering the stochastic partial differential equation
\begin{equation}\label{eq:spde}
\begin{cases}
\p_t\theta +\div(\theta\M\nabla\g\ast\theta)  = \nu(1+\Dm^s)\theta\dot{W}^t \\
\theta|_{t=0} = \theta^0
\end{cases}
\qquad (t,x)\in\R\times\R^d.
\end{equation}
Here, $s\geq 0$ (we will determine further restrictions later), $W$ is a standard real Brownian motion, and the stochastic differential should be interpreted in the It\^o sense. The $\dot{}$ superscript formally denotes differentiation with respect to time. We note that our choice of random diffusion differs from that of \cite{BNSW2020}, which used the fractional Laplacian $\Dm^s = (-\D)^{\frac{s}{2}}$. In that article, the authors work in the periodic setting of $\T^2$, and after modding out by the mass of the solution, which is conserved, homogeneous and inhomogeneous Sobolev spaces are equivalent. This equivalence fails on $\R^d$, and therefore the fractional Laplacian creates problems at low frequency, as will become clear to the reader in \Cref{sec:lwp,sec:glob} (see \cref{rem:per} for further comments). Accordingly, we opt to add an inhomogeneity to rectify this issue. We emphasize that our choice of random perturbation differs from the aforementioned prior works \cite{FGL2021, MST2021} on stochastic PKS equations, which did \emph{not} consider random diffusion as in \eqref{eq:spde}.

A priori, it is not clear how to interpret the SPDE \eqref{eq:spde}. Moreover, it is not clear that the stochastic term in the right-hand side is regularizing since $W^t$ does not have definite sign. Formally, suppose that we have a solution $\theta$ to \eqref{eq:spde}, and let us set $\mu^t\coloneqq e^{-\nu W^t(1+\Dm^s)}\theta^t$, where for each realization of $W$, $\Gamma^t\coloneqq e^{-\nu W^t(1+\Dm^s)}$ is the Fourier multiplier with symbol $e^{-\nu W^t(1+|\xi|^s)}$. As in \cite{BNSW2020}, to compute the equation satisfied by $\mu$, we formally use the Fourier transform together with It\^o's lemma to obtain
\begin{align}
\p_t\mu &= \Gamma\p_t\theta + \p_t\Gamma\theta + \p_t\comm{\Gamma}{\theta} \nn\\
&= \Gamma\paren*{-\div(\theta\M\nabla\g\ast\theta) + \nu(1+\Dm^s)\theta\dot{W}} + \paren*{-\nu(1+\Dm^s)\Gamma\dot{W} + \frac{\nu^2}{2}(1+\Dm^{s})^2\Gamma}\theta \nn\\
&\quad -\nu^2(1+\Dm^{s})^2\Gamma\theta \nn\\
&= - \div\Gamma\paren*{\Gamma^{-1}\mu\M\nabla\g\ast\Gamma^{-1}\mu}- \frac{\nu^2}{2}(1+\Dm^{s})^2\mu. \label{eq:rpde}
\end{align}
Above, $[\Gamma,\theta]$ denotes the quadratic covariation of the processes $\Gamma$ and $\theta$. Also, we have implicitly used that $\Gamma^t$ and $\Dm^s$ commute, both being Fourier multipliers. Observe that \eqref{eq:rpde} is a random PDE which may be interpreted pathwise (i.e. for fixed realization of $W$, which almost surely is a locally continuous path on $[0,\infty)$). Additionally, thanks to the nontrivial quadratic variation of Brownian motion, we have gained a diffusion term in this equation. Rather than deal with the original equation \eqref{eq:spde}, we shall base our mathematical interpretation on \eqref{eq:rpde}.

\begin{remark}\label{rem:Strat}
One may wonder why we choose the It\^o formulation in \eqref{eq:spde} as opposed to the Stratonovich formulation
\begin{equation}
\p_t\theta +\div(\theta\M\nabla\g\ast\theta)  = \nu(1+\Dm^s)\theta\circ\dot{W}^t
\end{equation}
which is formally equivalent to the It\^o equation
\begin{equation}\label{eq:strat}
\p_t\theta + \div(\theta\M\nabla\g\ast\theta) = \nu(1+\Dm^s)\theta\dot{W}^t + \frac{\nu^2}{2}(1+\Dm^{s})^2\theta.
\end{equation}
Suppose we define $\mu^t \coloneqq e^{-\nu W^t(1+\Dm^s)}\theta^t$ as before. Then again using It\^o's lemma, we find
\begin{align}
\p_t\mu &= \Gamma\p_t\theta + \p_t\Gamma\theta + \p_t\comm{\Gamma}{\theta} \nn\\
&=\Gamma\paren*{-\div(\theta\M\nabla\g\ast\theta) + \nu(1+\Dm^s)\theta\dot{W}^t + \frac{\nu^2}{2}(1+\Dm^{s})^2\theta} \nn \\
&\ph + \paren*{-\nu(1+\Dm^s)\Gamma\dot{W} + \frac{\nu^2}{2}(1+\Dm^{s})^2\Gamma}\theta  -\nu^2(1+\Dm^{s})^2\Gamma\theta \nn\\
&= - \Gamma\div(\Gamma^{-1}\mu\M\nabla\g\ast\Gamma^{-1}\mu).
\end{align}
No longer do we gain a fractional diffusion term, which, as we shall see below, is fatal to our arguments. The preceding conclusion is to be expected. Indeed, if $W$ is a $C^1$ path, then by ordinary calculus, $\p_t\mu = -\nu(1+\Dm^s)\Gamma\theta\dot{W} + \Gamma\p_t\theta$; and the Stratonovich formulation is precisely chosen to preserve the ordinary rules of calculus.
\end{remark}

\subsection{Statement of main results}\label{ssec:intromr}
We now present our main theorem. We assume that we have a standard real Brownian motion $\{W^t\}_{t\geq 0}$ defined on a filtered probability space $(\Omega, \F, \{\F^t\}_{t\geq 0},\P)$ satisfying all the usual assumptions. Given $\al,\be,\nu>0$, consider the event
\begin{equation}\label{eq:Omsetdef}
\Omega_{\al,\be,\nu} \coloneqq \{\omega\in\Omega : \al+\beta t - \nu W^t(\omega) \geq 0 \quad \forall t\in [0,\infty)\} \subset\Omega.
\end{equation}
It is well-known (see \cite[Proposition 6.8.1]{Resnick1992}) that $\P(\Omega_{\al,\be,\nu}) = 1 - e^{-\frac{2\al\be}{\nu^2}}$. We recall the Fourier-Lebesgue norms $\|\cdot\|_{\hat{W}^{\ka,r}}$ (see \cref{ssec:preSob}).

\begin{thm}\label{thm:main}
Let $d\geq 1$, $0<\ga<d+1$, $\frac{1}{2}<s\leq 1$. If $\ga>1$, also suppose that we are given $1<q<\frac{d}{\ga-1}$. Given $\al,\be,\nu>0$, suppose that $\beta<\frac{\nu^2}{2}$.

Suppose first that $\ga\leq 1$. If $s$ is sufficiently large depending on $\ga$, then there exists an $r_0\geq 1$ depending on $d,\ga,s$, such that the following holds. For any $1\leq r \leq r_0$ and any $\sigma>0$ sufficiently large depending on $d,\ga,r,s$, there is a constant $C>0$ depending only on $d,\ga,r,s,\sigma$, such that for initial datum $\mu^0$ satisfying
\begin{equation}\label{eq:idcon}
\|e^{(\al+\ep)(1+\Dm^{s})}\mu^0\|_{\hat{W}^{\sigma s,r}} < \frac{\nu^2-2\be}{C|\M|}
\end{equation}
and any path in $\Omega_{\al,\be,\nu}$, there exists a unique global solution $\mu\in C([0,\infty); \hat{W}^{\sigma s,r})$ to equation \eqref{eq:rpde} with initial datum $\mu^0$. Moreover, for $\phi^t\coloneqq \al+\be t$, the function
\begin{equation}\label{eq:Gevfunc}
t\mapsto \|e^{(\phi^t+\ep)(1+\Dm^s)}\mu^t\|_{\hat{W}^{\sigma s,r}}
\end{equation}
is strictly decreasing on $[0,\infty)$.

Now suppose that $\ga>1$. For $\sigma_r,\sigma_q>0$ sufficiently large depending on $d,\ga,s,r,q$, there is a constant $C>0$ depending only on $d,\ga,r,q,s,\sigma_r,\sigma_q$, such that for initial datum $\mu^0$ satisfying
\begin{equation}\label{eq:qidcon}
\|e^{(\al+\ep)(1+\Dm^{s})}\mu^0\|_{\hat{W}^{\sigma_r s,r}} + \|e^{(\al+\ep)(1+\Dm^{s})}\mu^0\|_{\hat{W}^{\sigma_q s,\frac{2q}{q-1}}} < \frac{\nu^2-2\be}{C|\M|}
\end{equation}
and any path in $\Omega_{\al,\be,\nu}$, there exists a unique global solution $\mu\in C([0,\infty); \hat{W}^{\sigma_r s,r} \cap \hat{W}^{\sigma_q s, q})$ to equation \eqref{eq:rpde} with initial datum $\mu^0$. Moreover, the function
\begin{equation}\label{eq:qGevfunc}
t\mapsto \|e^{(\phi^t+\ep)(1+\Dm^s)}\mu^t\|_{\hat{W}^{\sigma_r s,r}} + \|e^{(\phi^t+\ep)(1+\Dm^s)}\mu^t\|_{\hat{W}^{\sigma_q s,\frac{2q}{q-1}}}
\end{equation}
is strictly decreasing on $[0,\infty)$.
\end{thm}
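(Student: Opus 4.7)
The strategy is to derive a coercive dissipative inequality for a Gevrey-conjugated unknown and then close it via the smallness of the initial data. Introduce $F^t := e^{(\phi^t+\ep)(1+\Dm^s)}\mu^t$ and write $A(t):=(\phi^t+\ep)(1+\Dm^s)$, $B(t):=A(t)-\nu W^t(1+\Dm^s)$, so that $e^A\Gamma=e^B$ and $\Gamma^{-1}\mu=e^{-B}F$. Differentiating in $t$ using \eqref{eq:rpde} gives
\[
\p_t F + \tfrac{\nu^2}{2}(1+\Dm^s)^2 F - \beta(1+\Dm^s)F = -\div e^{B}\paren*{(e^{-B}F)\,\M\nabla\g\ast(e^{-B}F)}.
\]
On $\Omega_{\al,\be,\nu}$ one has $\phi^t+\ep-\nu W^t\geq\ep>0$, so $e^{-B}$ is a bounded and in fact smoothing Fourier multiplier, making the right-hand side well-defined pathwise.

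\textbf{Coercive linear bound.} Working pointwise in frequency, $\p_t|\hat F(\xi)|\leq\Re(\overline{\hat F/|\hat F|}\,\p_t\hat F)$ yields
\[
\p_t|\hat F(\xi)| \leq -\paren*{\tfrac{\nu^2}{2}(1+|\xi|^s)^2 - \beta(1+|\xi|^s)}|\hat F(\xi)| + |\widehat{\mathrm{NL}}(\xi)|.
\]
Since $1+|\xi|^s\geq 1$ and $\beta<\nu^2/2$, the elementary bound $\tfrac{\nu^2}{2}a^2-\beta a\geq\tfrac{\nu^2-2\beta}{2}a^2$ for $a\geq 1$ applies. Multiplying by $\langle\xi\rangle^{\sigma s}$ and taking the Fourier-Lebesgue norm (via Minkowski in the appropriate exponent),
\[
\tfrac{d}{dt}\|F\|_{\hat W^{\sigma s,r}} + \tfrac{\nu^2-2\beta}{2}\|(1+\Dm^s)^2 F\|_{\hat W^{\sigma s,r}} \leq \|\mathrm{NL}\|_{\hat W^{\sigma s,r}},
\]
and similarly for $\hat W^{\sigma_q s,\frac{2q}{q-1}}$ when $\ga>1$.

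\textbf{Nonlinear estimate.} In Fourier,
\[
\widehat{\mathrm{NL}}(\xi) = -i\xi\int e^{B(\xi)-B(\xi-\eta)-B(\eta)}\,\hat F(\xi-\eta)\,\M(i\eta)\hat\g(\eta)\,\hat F(\eta)\,d\eta.
\]
Subadditivity $|\xi|^s\leq|\xi-\eta|^s+|\eta|^s$ (valid since $s\in(\tfrac12,1]$) combined with $\phi^t+\ep-\nu W^t\geq 0$ forces $B(\xi)-B(\xi-\eta)-B(\eta)\leq 0$, so the exponential weights cancel completely. Together with the standing hypothesis $|\xi\hat\g(\xi)|\lesssim|\xi|^{1-\ga}$, this yields
\[
|\widehat{\mathrm{NL}}(\xi)| \lesssim |\M|\,|\xi|\int |\hat F(\xi-\eta)|\,|\eta|^{1-\ga}\,|\hat F(\eta)|\,d\eta.
\]
The target is a bound $\|\mathrm{NL}\|_X\leq C|\M|\,\|F\|_X\,\|(1+\Dm^s)^2 F\|_X$, where $X$ is either $\hat W^{\sigma s,r}$ (if $\ga\leq 1$) or the intersection in \eqref{eq:qidcon} (if $\ga>1$). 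For $\ga\leq 1$, $|\eta|^{1-\ga}$ is locally bounded and grows polynomially, so splitting $\langle\xi\rangle^{\sigma s}\lesssim\langle\xi-\eta\rangle^{\sigma s}+\langle\eta\rangle^{\sigma s}$ and invoking Young's convolution inequality in $L^{r'}$ suffices, provided $\sigma$ is large enough and $s$ large enough that the dissipation $(1+\Dm^s)^2$ absorbs the derivative loss $|\xi|\cdot|\eta|^{1-\ga}$ (this is where both the lower bound on $s$ and the upper bound $r\leq r_0$ arise). For $\ga>1$, $|\eta|^{1-\ga}$ lies in weak $L^{d/(\ga-1)}_{\mathrm{loc}}$ but not in $L^{d/(\ga-1)}_{\mathrm{loc}}$, so one splits the convolution at $|\eta|\sim 1$ and dualizes the singular portion via H\"older/Young against the higher-integrability norm $\hat W^{\sigma_q s,\frac{2q}{q-1}}$ for any $q<d/(\ga-1)$, yielding a coupled inequality for the sum of the two norms.

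\textbf{Conclusion and main obstacle.} Combining the preceding steps,
\[
\tfrac{d}{dt}\|F^t\|_X + \paren*{\tfrac{\nu^2-2\beta}{2} - C|\M|\,\|F^t\|_X}\|(1+\Dm^s)^2 F^t\|_X \leq 0.
\]
With $C$ as in the statement, \eqref{eq:idcon} or \eqref{eq:qidcon} forces the parenthesized factor to be strictly positive at $t=0$; a standard continuity/bootstrap argument, grafted onto the local well-posedness from Section~\ref{sec:lwp}, then propagates this strict positivity to all $t\in[0,\infty)$, simultaneously ruling out blow-up and proving the strict monotonicity of \eqref{eq:Gevfunc} and \eqref{eq:qGevfunc}. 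The principal difficulty is the nonlinear estimate in the super-Coulombic regime $\ga>1$: the non-local-integrability of $|\eta|^{1-\ga}$ at the origin is what necessitates the auxiliary Fourier-Lebesgue norm of exponent $\frac{2q}{q-1}$, and balancing the derivative loss $|\xi|\cdot|\eta|^{1-\ga}$ against the gain from $(1+\Dm^s)^2$ is what pins down the admissible ranges of $r$, $q$, $s$, $\sigma_r$, $\sigma_q$.
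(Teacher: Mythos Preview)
Your energy-estimate approach captures the monotonicity argument (the paper's Proposition~\ref{prop:mon}) correctly: the exponential weight cancellation, the coercive linear bound, and the nonlinear estimate are all essentially as in the paper. However, there is a genuine gap in the passage from the differential inequality to global existence.

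The inequality you derive,
\[
\tfrac{d}{dt}\|F^t\|_X + \paren*{\tfrac{\nu^2-2\beta}{2} - C|\M|\,\|F^t\|_X}\|(1+\Dm^s)^2 F^t\|_X \leq 0,
\]
requires the solution to lie in the \emph{strictly stronger} space carrying $(1+\Dm^s)^2$ (Sobolev index $\sigma+\tfrac{2}{r}$) just to be meaningful. The local well-posedness in Section~\ref{sec:lwp}, however, only constructs solutions with Sobolev index $\sigma_{lwp}<\tfrac{2s-1}{s}<1$, far below the $\sigma$ needed here. A ``standard continuity/bootstrap'' does not close this loop, because the a priori estimate controls $\|F\|_X$ in terms of a higher norm you do not yet control. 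The paper bridges this gap through a mechanism you have omitted: the Gevrey-index trading of Lemma~\ref{lem:Gemb}, which shows that a solution in $\G_{\phi+\ep_1}^{\sigma,r}$ automatically lies in $\G_{\phi+\ep_2}^{\sigma',r}$ for any $\sigma'\geq\sigma$ whenever $\ep_2<\ep_1$. This lets one apply the monotonicity at index $\ep_2<\ep_1$, conclude the solution stays bounded there, re-run the local theory, and gain a \emph{fixed} increment of lifetime (Lemmas~\ref{lem:lspan} and~\ref{lem:qlspan}). Iterating over a sequence $\ep=\ep_0>\ep_1>\cdots>\ep'$ yields $T_{\max,\sigma,\ep'}=\infty$ for every $\ep'<\ep$, and a Fatou argument recovers $\ep'=\ep$. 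Without this iterative exchange of analyticity radius for Sobolev regularity, your argument is formal: you have not explained why $\|(1+\Dm^s)^2 F^t\|_X$ stays finite on any interval beyond the (low-regularity) local lifespan.
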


To the best our knowledge, our result is the first demonstration that a random diffusion term can lead to global solutions for equations which, without any diffusion, necessarily blow up in finite time. This provides an affirmative answer to \cref{ques}. Furthermore, \cref{thm:main} substantially generalizes the prior work of Buckmaster et al. \cite[Theorem 1.1]{BNSW2020}, which was limited to the SQG case $\M$ equals rotation by $\frac{\pi}{2}$ and $\hat{\g}(\xi) = |\xi|^{-1}$, corresponding to a conservative/Hamiltonian flow. In particular, our result covers the full range of interactions in the model case \eqref{eq:gmodel} and also allows for interactions (e.g. $d<\ga<d+1$) which may not be singular in physical space near the origin but have very slow decay or even growth at $\infty$.

We do not say anything here about the asymptotic behavior of the solutions we construct, only that they are global. It would be interesting to give an asymptotic description of the solution as $t\rightarrow\infty$, valid at least with positive probability. Indeed, the reader will recall from the beginning of the introduction that such a description is known for the deterministic PKS equation. We hope to address this question in future work.

Before transitioning to discuss the proof of \cref{thm:main}, let us record a few remarks on the statement of and assumptions behind the theorem.

\begin{remark}\label{rem:path}
The solutions in \cref{thm:main} are \emph{pathwise}. More precisely, there is a good set $\Omega_{\al,\be,\nu}$, defined above, of realizations of the Brownian motion, such that for any $\omega\in\Omega_{\al,\be,\nu}$ and with $W(\omega): [0,\infty)\rightarrow \R$, we have a unique global solution $\mu(\omega): [0,\infty) \rightarrow \R$ to equation \eqref{eq:rpde}. We can then define a notion of solution to the original equation \eqref{eq:spde} by setting $\theta^t(\omega) \coloneqq e^{\nu W^t(\omega)(1+\Dm^s)}\mu^t(\omega)$. Since for any $\om\in\Omega_{\al,\be,\nu}$, we have $\phi^t - \nu W^t\geq 0$, it follows from the definition of the Fourier-Lebesgue norm that
\begin{equation}
\|\theta^t(\omega)\|_{\hat{W}^{\sigma s,p}} \leq \|e^{\phi^t(1+\Dm^s)}\mu^t(\omega)\|_{\hat{W}^{\sigma s,p}}.
\end{equation}
for any $1\leq p\leq \infty$.

One may interpret \cref{thm:main} as follows (cf. \cite[Remark 5.3]{BNSW2020}). Fixing $\al,\ep,\nu$ and given an initial condition $\mu^0$ such that
\begin{equation}
E\coloneqq \|e^{(\al+\ep)(1+\Dm^s)}\mu^0\|_{\hat{W}^{\sigma_r s,r}} + \|e^{(\al+\ep)(1+\Dm^s)}\mu^0\|_{\hat{W}^{\sigma_q s,\frac{2q}{q-1}}}\indic_{\ga>1} < \infty,
\end{equation}
we can choose $\beta< \frac{\nu^2-CE|\M|}{2}$, where $C$ is the constant from condition \eqref{eq:idcon} or \eqref{eq:qidcon}. Then with probability at least
\begin{equation}
P = 1- \exp(-\frac{\al(\nu^2-CE|\M|)}{\nu^2}),
\end{equation}
there is a pathwise unique global solution $\mu \in C([0,\infty); \hat{W}^{\sigma_r s, r} \cap \hat{W}^{\sigma_q s, \frac{2q}{q-1}})$ to equation \eqref{eq:rpde} with initial datum $\mu^0$, such that the function \eqref{eq:qGevfunc} is strictly decreasing on $[0,\infty)$. 
\end{remark}

\begin{remark}\label{rem:size}
So as to make the result as accessible as possible, we have opted not to include in the statement of \cref{thm:main} the explicit relations the parameters, such as $d,\ga,s,r_0,\sigma$, have to satisfy in order for the theorem to apply. These relations are explicitly worked out in \Cref{sec:lwp,sec:glob} during the proofs of \cref{prop:lwp,prop:mon}. Here and throughout this article, the reader should keep in mind that the most favorable choices for $s,r$ are $s=1$ and $r=1$.
\end{remark}

\begin{remark}\label{rem:IDcon}
The conditions \eqref{eq:idcon}, \eqref{eq:qidcon} allows for initial data of arbitrarily large mass. Indeed, focusing on the $\ga\leq 1$ case, suppose that $\hat{\mu}^0\in C_c^\infty$ and $\hat{\mu}^0(0) = M$, for given $M$. Let $L= \sup_{\xi\in \supp(\hat{\mu}^0)} |\xi|$. Then
\begin{equation}
\|e^{(\al+\ep)(1+\Dm^s)}\mu^0\|_{\hat{W}^{\sigma s,r }} \leq C_d\paren*{1+L}^{\sigma s} L^{\frac{d}{r}}e^{(\al+\ep)(1+L^s)}\|\hat{\mu}^0\|_{L^\infty}.
\end{equation}
Taking $\beta = \vep\frac{\nu^2}{2}$, for given $\vep\in (0,1)$, and requiring
\begin{equation}\label{eq:nucon}
\frac{C_dC|\M|\paren*{1+L}^{\sigma s} L^{\frac{d}{r}}e^{(\al+\ep)(1+L^s)}\|\hat{\mu}^0\|_{L^\infty}}{(1-\vep)} < \nu^2,
\end{equation}
we see that \eqref{eq:idcon} holds. For fixed $\nu$, we can make the left-hand side of the preceding inequality arbitrarily small by letting $L\rightarrow 0^+$. While for given $L$, we can take $\nu$ arbitrarily large so that \eqref{eq:nucon} holds. The latter case is reminiscent of the mass-diffusion threshold we saw earlier for the PKS equation. 
\end{remark}

\begin{remark}\label{rem:nu}
Although only the ratio $\frac{2\beta}{\nu^2}$ appears in the value of $\P(\Omega_{\al,\be,\nu})$, which might suggest to the reader that one can send $\beta,\nu\rightarrow 0^+$ while fixing $\frac{2\beta}{\nu^2}$, we emphasize that the assumption $0<\beta<\frac{\nu^2}{2}$ is crucial. Indeed, our argument for showing local well-posedness fails if $\beta\geq \frac{\nu^2}{2}$, and the requirement $\beta<\frac{\nu^2}{2}$ appears when showing the functions \eqref{eq:Gevfunc}, \eqref{eq:qGevfunc} are strictly decreasing. Furthermore, if $\nu,\beta\rightarrow 0^+$, then the right-hand sides of the initial datum conditions \eqref{eq:idcon}, \eqref{eq:qidcon} are tending to zero, which implies that only $\mu^0\equiv 0$ would satisfy these conditions in the limit.

Additionally, one might think that by increasing $\nu$, the diffusion becomes stronger and therefore one should get a ``better'' result. But $\P(\Omega_{\al,\be,\nu})$ evidently decreases to zero as $\nu\rightarrow\infty$, assuming $\beta$ is fixed. The reason has to deal with the resulting growing variance of $\nu W^t$ appearing in the definition of $\Gamma^t$, which requires a large value of $\beta$ to be absorbed by the exponential weight in our function spaces.
\end{remark}

\begin{remark}\label{rem:per}
\cref{thm:main} is also valid if $\R^d$ is replaced by $\T^d$. In fact, since Fourier space is discrete on the torus, we do not have the same issues at low frequency as in the setting of $\R^d$, and therefore one can replace equation \eqref{eq:spde} with
\begin{equation}\label{eq:homspde}
\p_t\theta + \div(\theta\M\nabla\g\ast\theta) = \nu\Dm^s\theta \dot{W}^t.
\end{equation}
An elementary computation reveals that solutions conserve mass and therefore one may quotient out the mass by assuming it is zero. As a result, the zero Fourier mode vanishes and one has an equivalence of homogeneous and inhomogeneous Sobolev norms. Working with \eqref{eq:homspde} simplifies the proof greatly, as the two-tiered norm for $\ga>1$ becomes unnecessary.
\end{remark}

\begin{remark}\label{rem:diffu}
\cref{thm:main} is still valid if one adds a deterministic diffusion term $-\chi\Dm^{\la}\theta$ to the right-hand side of \eqref{eq:spde}, for $\chi,\la>0$, which leads to \eqref{eq:rpde} being replaced by
\begin{equation}
\p_t\mu + \div\Gamma\paren*{\Gamma^{-1}\mu\M\nabla\g\ast\Gamma^{-1}\mu} = -\paren*{\frac{\nu^2}{2}(1+\Dm^{s})^2 + \chi\Dm^{\la}}\mu.
\end{equation}
Since a deterministic diffusion term only makes the circumstances for global existence more favorable, we have opted not to include this term.
\end{remark}

\subsection{Comments on proof}\label{ssec:introprf}
We briefly comment on the proof of \cref{thm:main}. In light of the success of \cite{BNSW2020} in showing that adding random diffusion to the inviscid SQG equation leads, with high probability, to global solutions, and that the SQG equation is a special case of \eqref{eq:pde}, we are guided by the approach of the cited work. There are two main steps:
\begin{enumerate}[(1)]
\item\label{item:StepLWP}
Local well-posedness via contraction mapping argument,
\item\label{item:StepMon}
Monotonicity of the Gevrey norm via energy estimate.
\end{enumerate}
As discussed below, repeating the proof of \cite{BNSW2020} in our more general context would fail due to issues at low frequency related to working on $\R^d$, as opposed to $\T^d$, and issues at high frequency stemming from the singularity of our interactions. Several new ideas are consequently needed.

\medskip
Step \ref{item:StepLWP}, carried out in \cref{sec:lwp}, proceeds by rewriting the equation \eqref{eq:rpde} in mild form (see \eqref{eq:mild}) which is amenable to a contraction mapping argument for short times. The main difficulty is estimating the nonlinear term in the scale of Gevrey-type spaces defined in \cref{ssec:lwpGev}---the exponential weights of which are used to absorb the $\Gamma$ operators---in which we want to construct solutions. In particular, the velocity field $\M\nabla\g\ast\mu$ can be singular compared to the regularity of the scalar $\mu$, as opposed to of the same order in the SQG case of \cite{BNSW2020}, which requires carefully balancing the derivatives in the nonlinearity against the diffusion.

It turns out that using $L^2$-based function spaces, as in \cite{BNSW2020}, leads to a restriction on $\ga$ that scales linearly in the dimension $d$, which would then limit us to strictly sub-Coulombic interactions $\g$ in dimensions $d\geq 4$. One of our new insights is to instead consider Gevrey-Fourier-Lebesgue hybrid spaces (see \eqref{eq:Gdef}), which of course include the function spaces of \cite{BNSW2020} as a special case. In particular, our new spaces behave well with respect to Sobolev embedding when the integrability exponent $r\rightarrow 1^+$, becoming an algebra at $r=1$.

Another challenge in the local well-posedness step is the singularity near the origin of the Fourier transform $\hat{\g}$ when $\ga$ is large. In particular, for $\ga>1$, $|\xi\hat{\g}(\xi)|$ may blow up as $|\xi|\rightarrow 0$. Dealing with this issue requires using a two-tiered function space, compared to the case $\ga\leq 1$. More precisely, at high frequency, we need our functions to be in an exponentially-weighted Fourier-Lebesgue space with high regularity index and low integrability exponent; while at low frequency, we need our functions to be in a similarly weighted space with low regularity index and high integrability exponent. This leads us to the multi-parameter scale of spaces $\X_{\phi,\ga}^{\sigma,\tl{\sigma},r,\tl{r},}$ introduced in \eqref{eq:Xdef} (more generally, see \cref{sec:lwp}).

After some paraproduct analysis and a fair amount of algebra to determine what conditions all the various parameters have to satisfy, we prove \cref{prop:lwp}, which asserts local well-posedness in the class of solutions satisfying
\begin{equation}\label{eq:introsup}
\sup_{0\leq t\leq T} \paren*{\|e^{(\phi^t+\ep)(1+\Dm^s)}\mu^t\|_{\hat{W}^{\sigma_{lwp},r}}  + \|e^{(\phi^t+\ep)(1+\Dm^s)}\mu^t\|_{\hat{W}^{0,\frac{2q}{q-1}}}\indic_{\ga>1}} < \infty,
\end{equation}
for some $\sigma_{lwp} < \sigma$. Here, $\indic_{(\cdot)}$ denotes the indicator function for the condition $(\cdot)$. Although the Sobolev index $\sigma_{lwp}$ is strictly less than that of the initial datum, we will later improve it to $\sigma$ through a bootstrap argument.

\medskip
Step \ref{item:StepMon}, carried out in \cref{sec:glob}, consists of upgrading the local solution from step \ref{item:StepLWP} to a global solution and also upgrading the Sobolev index from $\sigma_{lwp}$ to $\sigma$. The original idea of \cite[Proposition 4.1]{BNSW2020}, modified to our setting and presented for the $r=2$ case, is to prove an inequality for the time derivative of the ``energy'' $\|e^{\phi^t(1+\Dm^s)}\mu^t\|_{H^{\sigma s}}^2$, which shows that this quantity is strictly decreasing on an interval $[0,T]$, provided it is not too large at initial time and that $\|e^{\phi^t(1+\Dm^s)}\mu^t\|_{H^{(\sigma+1)s}}$ remains finite on the same interval. With this type of conditional monotonicity result, the authors of that work could exploit the fact that the initial datum belongs to a space with higher Gevrey index $\al+\ep$ in order to iteratively extend the lifespan of the solution, losing a decreasing fraction of $\ep$ along each step of the iteration. Note that in their work the Sobolev index from the local well-posedness does not change.

Since we deal with $\ga$ that are more singular than in \cite{BNSW2020} and step \ref{item:StepLWP} only gives local solutions in a rougher space than that claimed in the statement of \cref{thm:main}, we need a more sophisticated argument. Moreover, we need to work in our scale of Fourier-Lebesgue spaces, with the auxiliary space if $\ga>1$. We prove a similar conditional monotonicity result for the energy
\begin{equation}
\|e^{(\phi^t+\ep')(1+\Dm^s)}\mu^t\|_{\hat{W}^{\sigma_{r} s,r}}^r + \|e^{(\phi^t+\ep')(1+\Dm^s)}\mu^t\|_{\hat{W}^{\sigma_{q} s,\frac{2q}{q-1}}}^{\frac{2q}{q-1}}\indic_{\ga>1},
\end{equation}
for any $0\leq \ep'\leq \ep$, assuming $\sigma_{r},\sigma_{q}$ are sufficiently large depending on $d,\ga,s,r,q$. Similar to step \ref{item:StepLWP}, the bulk of the labor consists of paraproduct analysis for the nonlinearity and determining the set of conditions that the parameters $d,\ga,s,r,q,\sigma_{r},\sigma_{q}$ have to satisfy in order for the paraproduct analysis to be valid. In order to access the monotonicity result, since $\sigma_r > \sigma_{lwp}$ and $\sigma_q>0$, we exploit the higher Gevrey index of the initial datum together with an embedding lemma (see \cref{lem:Gemb}) to conclude that if \eqref{eq:introsup} holds, then for any $0\leq \ep'<\ep$ and $\sigma_r,\sigma_q\in\R$,
\begin{equation}
\sup_{0\leq t\leq T}\paren*{\|e^{(\phi^t + \ep')(1+\Dm^s)}\mu^t\|_{\hat{W}^{\sigma_r s,r}} + \|e^{(\phi^t+\ep')(1+\Dm^s)}\mu^t\|_{\hat{W}^{\sigma_q s,\frac{2q}{q-1}}}} < \infty
\end{equation}
also holds. We then obtain global existence by a lemma (see \cref{lem:lspan}) which quantifies the improvement in the lifespan of the solution as we decrease $\ep'$. Finally, we conclude global existence and monotonicity also hold with $\ep'=\ep$ by essentially Fatou's lemma.

\subsection{Organization of article}\label{ssec:introorg}
We close the introduction by outlining the remaining body of the article. In \cref{sec:pre}, we introduce the basic notation of the article and review some frequently used facts from Fourier analysis. In \cref{sec:lwp}, we begin (\cref{ssec:lwpGev}) with our class of Gevrey-Fourier-Lebesgue spaces and their properties and then (\cref{ssec:lwpcm}) show the local well-posedness of the Cauchy problem for equation \eqref{eq:rpde}.  In \cref{sec:glob}, we first (\cref{ssec:globmon}) show the monotonicity property of the Gevrey norm. We then (\cref{ssec:globmr}) use this property together with the local theory from \cref{sec:lwp} in order to prove our main result, \cref{thm:main}.

\subsection{Acknowledgments}
The first author thanks Sylvia Serfaty for helpful comments on the relevance of equation \eqref{eq:pde} for vortices in superconductors. Both authors gratefully acknowledge the hospitality of the Institute for Computational and Experimental Research in Mathematics (ICERM) where the manuscript for this project was completed during the ``Hamiltonian Methods in Dispersive and Wave Evolution Equations'' semester program.

\section{Preliminaries}\label{sec:pre}
\subsection{Notation}\label{ssec:prenot}
Given nonnegative quantities $A$ and $B$, we write $A\lesssim B$ if there exists a constant $C>0$, independent of $A$ and $B$, such that $A\leq CB$. If $A \lesssim B$ and $B\lesssim A$, we write $A\sim B$. To emphasize the dependence of the constant $C$ on some parameter $p$, we sometimes write $A\lesssim_p B$ or $A\sim_p B$. We denote the natural numbers excluding zero by $\N$ and including zero by $\N_0$. Similarly, we denote the positive real numbers by $\R_+$.

The Fourier and inverse transform of a function $f:\R^d\rightarrow\C^m$ are defined according to the convention
\begin{equation}
\begin{split}
\hat{f}(\xi) = \F(f)(\xi) &\coloneqq \int_{\R^d}f(x)e^{-ix\cdot\xi}dx,\\
\check{f}(x) = \F^{-1}(f)(x) &\coloneqq (2\pi)^{-d}\int_{\R^d}f(\xi)e^{i\xi\cdot x}d\xi.
\end{split}
\end{equation}
In the case $m>1$, the notation should be understood component-wise. Given a function $m:\R^d\rightarrow\C^m$, we use the notation $m(\nabla)$ to denote the $\C^m$-valued Fourier multiplier with symbol $m(\xi)$. In the particular, the notation $\Dm = (-\D)^{\frac{1}{2}}$ denotes the Fourier multiplier with symbol $|\xi|$ and $\jp{\nabla} \coloneqq (1+|\nabla|^2)^{1/2}$ denotes the multiplier with Japanese bracket symbol $(1+|\xi|^2)^{1/2}$.

\subsection{Sobolev embedding}\label{ssec:preSob}
For the reader's convenience, we state and prove an elementary Sobolev embedding tailored to the Fourier analysis of \Cref{sec:lwp,sec:glob}. To the state the lemma, we recall that the Bessel potential space $W^{s,p}$ is defined by
\begin{equation}
\|f\|_{W^{s,p}} \coloneqq \|\jp{\nabla}^{s}f\|_{L^p}, \qquad s\in\R, \ p\in (1,\infty)
\end{equation}
and the Fourier-Lebesgue space $\hat{W}^{s,p}$ is defined by
\begin{equation}
\|f\|_{\hat{W}^{s,p}} \coloneqq \|\jp{\cdot}^s\hat{f}\|_{L^p}, \qquad s\in\R, \ p\in [1,\infty].
\end{equation}
For $p=2$, these two spaces coincide by Plancherel's theorem, and, following standard notation, we shall write $H^s$. When $s=0$, we shall also adopt the notation $\hat{W}^{0,r} = \hat{L}^r$.

\begin{lemma}\label{lem:Sob}
If $1\leq p <r\leq\infty$, then
\begin{equation}
\|f\|_{\hat{W}^{s,p}} \lesssim_{d,p,r} \|f\|_{\hat{W}^{(s+\frac{d(r-p)}{rp})+,r}},
\end{equation}
where the notation $(\cdot)+$ means $(\cdot)+\vep$, for any $\vep>0$, with the implicit constant then depending on $\vep$ and possibly blowing up as $\vep\rightarrow 0^+$. If $2\leq p\leq \infty$, then
\begin{equation}
\|f\|_{\hat{W}^{s,p}} \lesssim_{d,p} \|f\|_{W^{s,\frac{p}{p-1}}}.
\end{equation}
\end{lemma}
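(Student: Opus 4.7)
The plan is to handle the two assertions separately: the first is Hölder's inequality on the Fourier side against a power-weight, and the second is Hausdorff--Young applied to $\langle\nabla\rangle^s f$.

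For the first inequality, I would write $\langle\xi\rangle^s\hat{f}(\xi) = \langle\xi\rangle^{-\alpha}\cdot\langle\xi\rangle^{s+\alpha}\hat{f}(\xi)$ and apply Hölder in $\xi$ with exponents $a$ and $r$ satisfying $\tfrac{1}{a}+\tfrac{1}{r}=\tfrac{1}{p}$, which gives $a=\tfrac{pr}{r-p}$ (and $a=p$ when $r=\infty$). The weight $\langle\xi\rangle^{-\alpha}$ lies in $L^a(\mathbb{R}^d)$ precisely when $\alpha a>d$, i.e.\ $\alpha>\tfrac{d(r-p)}{pr}$, so any strict choice $\alpha=\tfrac{d(r-p)}{pr}+\varepsilon$ with $\varepsilon>0$ produces a finite constant (blowing up as $\varepsilon\to 0^+$) and yields
\[
\|\langle\xi\rangle^{s}\hat{f}\|_{L^p} \le \|\langle\xi\rangle^{-\alpha}\|_{L^a}\,\|\langle\xi\rangle^{s+\alpha}\hat{f}\|_{L^r},
\]
which is exactly the claimed bound with the $+$ indicating the $\varepsilon$-loss in the regularity index.

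For the second inequality, I would observe that $\widehat{\langle\nabla\rangle^{s}f}(\xi)=\langle\xi\rangle^{s}\hat{f}(\xi)$, so
\[
\|f\|_{\hat{W}^{s,p}} = \|\widehat{\langle\nabla\rangle^{s}f}\|_{L^p},
\]
and since $2\le p\le\infty$ the conjugate exponent $p'=\tfrac{p}{p-1}$ lies in $[1,2]$, so the Hausdorff--Young inequality gives
\[
\|\widehat{\langle\nabla\rangle^{s}f}\|_{L^p}\lesssim_{d,p}\|\langle\nabla\rangle^{s}f\|_{L^{p'}}=\|f\|_{W^{s,p/(p-1)}},
\]
which is the second assertion.

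Neither step contains a real obstacle: the only thing to be a bit careful about is the endpoint behavior in Hölder (the case $r=\infty$, which forces $a=p$ and still needs $\alpha>d/p=\tfrac{d(r-p)}{pr}\big|_{r=\infty}$, so the formula is consistent) and in Hausdorff--Young (the endpoints $p=2$ and $p=\infty$, corresponding to Plancherel and the trivial bound $\|\hat g\|_{L^\infty}\le\|g\|_{L^1}$, respectively). The $\varepsilon$-loss in the first inequality is genuine and cannot be removed by this method because $\langle\xi\rangle^{-d(r-p)/(pr)}\notin L^{pr/(r-p)}(\mathbb{R}^d)$; this is why the statement is formulated with the $(\,\cdot\,)+$ notation.
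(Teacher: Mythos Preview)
Your proof is correct and follows essentially the same approach as the paper's own proof: H\"older's inequality against the weight $\langle\xi\rangle^{-\alpha}$ in $L^{pr/(r-p)}$ for the first assertion (the paper writes $\delta$ for your $\alpha$), and Hausdorff--Young applied to $\langle\nabla\rangle^s f$ for the second. Your additional remarks on the endpoint cases $r=\infty$ and $p\in\{2,\infty\}$ and on the necessity of the $\varepsilon$-loss are correct and go slightly beyond what the paper records.
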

\begin{proof}
Fix $r> 1$ and let $1\leq p<r$. By H\"older's inequality,
\begin{align}
\|f\|_{\hat{W}^{s,p}} = \|\jp{\cdot}^{s}\hat{f}\|_{L^p} = \|\jp{\cdot}^{-\d}\jp{\cdot}^{s+\d}\hat{f}\|_{L^{p}} \lesssim_d \|\jp{\cdot}^{-\d}\|_{L^{\frac{rp}{r-p}}} \|f\|_{\hat{W}^{r,s+\d}}.
\end{align}
The first factor is finite provided that $\frac{r p\d}{r-p} > d \Longleftrightarrow \d> \frac{d(r-p)}{rp}$.

Now suppose $2 \leq p\leq\infty$. If $\jp{\nabla}^s f\in L^{\frac{p}{p-1}}$, then by the Hausdorff-Young inequality $\mathcal{F}(\jp{\nabla}^s f)(\xi) = \jp{\xi}^s \hat{f}(\xi)$ belongs to $L^p$ and
\begin{equation}
\|f\|_{\hat{W}^{s,p}} = \|\jp{\cdot}^s \hat{f}\|_{L^{p}} \lesssim_{d,p}  \|\jp{\nabla}^s f\|_{L^{\frac{p}{p-1}}} = \|f\|_{W^{s,\frac{p}{p-1}}}.
\end{equation}
\end{proof}

\section{Local well-posedness}\label{sec:lwp}
We investigate the local well-posedness of the Cauchy problem
\begin{equation}\label{eq:cp}
\begin{cases}
\p_t\mu + \div\Gamma\paren*{\Gamma^{-1}\mu(\M\nabla\g\ast\Gamma^{-1}\mu)} + \frac{\nu^2}{2}(1+\Dm^{s})^2\mu =0\\
\mu|_{t=0} = \mu^0.
\end{cases}
\end{equation}
Set $A\coloneqq (1+\Dm^{s})^2$. It will be convenient to introduce the bilinear operator
\begin{equation}\label{eq:Bdef}
B(f,g) \coloneqq \div\Gamma\paren*{\Gamma^{-1}f(\M\nabla\g\ast\Gamma^{-1}g)}.
\end{equation}
The reader should note that $B$ itself depends on time through $\Gamma$, and, when necessary, we shall make explicit this time dependence by writing $B^t(f,g)$. We rewrite \eqref{eq:cp} in mild form
\begin{equation}\label{eq:mild}
\mu^t = e^{-\frac{t\nu^2}{2}A}\mu^0 - \int_0^t e^{-\frac{(t-\tau)\nu^2}{2}A} B^\tau(\mu^\tau,\mu^\tau)d\tau.
\end{equation}

In order to perform a contraction mapping argument based on the mild formulation \eqref{eq:mild}, we use a generalization of the scale of Gevrey function spaces from \cite{BNSW2020} (see also \cite{FT1989}). Given $a\geq 0$ and $\ka\in\R$, we define
\begin{equation}\label{eq:Gdef}
\|f\|_{{\G}_a^{\ka,r}} \coloneqq \|e^{a A^{1/2}} f\|_{\hat{W}^{\ka s,r}}.
\end{equation}
We refer to $a$ as the \emph{exponential weight} or \emph{Gevrey index}, $\ka$ as the \emph{Sobolev index}, and $r$ as the \emph{integrability exponent}. If $r<\infty$, then the completion with respect to this norm of functions with compactly supported Fourier transforms in $L^r$  defines a Banach space, as the reader may check. For $0<T<\infty$ and a continuous function $\phi:[0,T]\rightarrow [0,\infty)$, we define
\begin{equation}
\|f\|_{C_T^0{\G}_{\phi}^{\ka,r}} \coloneqq \sup_{0\leq t\leq T} \|f^t\|_{\G_{\phi^t}^{\ka,r}}.
\end{equation}
We write $C_{\infty}^0$ when $\sup_{0\leq t\leq T}$ is replaced by $\sup_{0\leq t<\infty}$. Set
\begin{equation}
C_T^0\G_{\phi}^{\ka,r} \coloneqq \{f\in C([0,T]; \hat{W}^{\ka s,r}(\R^d)) : \|f\|_{C_T^0\G_{\phi}^{\ka,r}} < \infty\}.
\end{equation}
We also allow for $T=\infty$, replacing $[0,T]$ in the preceding line with $[0,\infty)$. Evidently, this defines a Banach space.

To deal with possible issues at low frequencies when $\ga$ is large, we also have need for the Banach spaces
\begin{equation}\label{eq:Xdef}
\X_{a,\ga}^{\ka_1,\ka_2,r_1,r_2} \coloneqq \begin{cases} \G_{a}^{\ka_1,r_1}, & {\ga \leq 1} \\ \G_{a}^{\ka_1,r_1} \cap \G_{a}^{\ka_2,r_2}, & {\ga > 1},\end{cases}
\end{equation}
where
\begin{equation}
\|f\|_{\G_{a}^{\ka_1,r_1} \cap \G_{a}^{\ka_2,r_2}} \coloneqq \|f\|_{\G_{a}^{\ka_1,r_1} } + \|f\|_{\G_{a}^{\ka_2,r_2}},
\end{equation}
and
\begin{equation}
C_T^0\mathsf{X}_{\phi,\ga}^{\ka_1,\ka_2, r_1,r_2} \coloneqq \left\{f\in C\paren*{[0,T]; \hat{W}^{\ka_1 s, r_1}(\R^d)\cap \hat{W}^{\ka_2 s,r_2}(\R^d)} : \|f\|_{C_T^0\mathsf{X}_{\phi,\ga}^{\ka_1,\ka_2,r_1,r_2}}<\infty\right\}.
\end{equation}
The main result of this section is the following proposition.

\begin{prop}\label{prop:lwp}
Let $d\geq 1$, $0<\ga<d+1$, $\frac{1}{2}<s\leq 1$. If $\ga>1$, then also suppose we are given $1\leq q<\frac{d}{\ga-1}$. Given $\al,\be>0$, suppose $W$ is a realization from the set $\Omega_{\al,\be,\nu}$ and that $\beta<\frac{\nu^2}{2}$. Set $\phi^t\coloneqq \al+\beta t$.

There exists $r_0 \geq 1$ depending on $d,\ga,s$, such that the following holds. For any $1\leq r\leq r_0$, there exists $\sigma_0 \in (0,\frac{2s-1}{s})$ depending on $d,\ga,r,s$, such that for any $\sigma \in (\sigma_0,\frac{2s-1}{s})$ with $1-\ga\leq \sigma s$, there exists a constant $C>0$ depending on $d,\ga,s,\sigma,r,q,\beta,\nu$, such that for  $\|\mu^0\|_{\X_{\al,\ga}^{\sigma,0,r,\frac{2q}{q-1}}} \leq R$, there exists a unique solution $\mu \in C_T^0 \X_{\phi,\ga}^{\sigma,0,r,\frac{2q}{q-1}}$ to the Cauchy problem \eqref{eq:cp}, with $T\geq C(|\M|R)^{-\frac{2s}{(2-\sigma)s-1}}$. Moreover,
\begin{equation}
\|\mu\|_{C_T^0\X_{\phi,\ga}^{\sigma,0,r,\frac{2q}{q-1}}} \leq 2\|\mu^0\|_{\X_{\al,\ga}^{\sigma,0,r,\frac{2q}{q-1}}}.
\end{equation}
Additionally, if $\|\mu_j^0\|_{\X_{\al,\ga}^{\sigma,0,r,\frac{2q}{q-1}}}\leq R$, for $j\in\{1,2\}$, then
\begin{equation}
\|\mu_1-\mu_2\|_{C_T^0\X_{\phi,\ga}^{\sigma,0,r,\frac{2q}{q-1}}} \leq 2\|\mu_1^0-\mu_2^0\|_{\X_{\al,\ga}^{\sigma,0,r,\frac{2q}{q-1}}}.
\end{equation}
\end{prop}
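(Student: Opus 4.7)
The plan is to apply the Banach fixed-point theorem to the mild-form map
\begin{equation*}
\Psi(\mu)^t \coloneqq e^{-\frac{t\nu^2}{2}A}\mu^0 - \int_0^t e^{-\frac{(t-\tau)\nu^2}{2}A} B^\tau(\mu^\tau,\mu^\tau)\, d\tau
\end{equation*}
on the closed ball of radius $2R$ in $C_T^0 \X_{\phi,\ga}^{\sigma,0,r,2q/(q-1)}$, for $T>0$ small. Two ingredients are needed: a semigroup smoothing estimate absorbing the Gevrey weight, and a bilinear estimate for $B^t$ that loses strictly fewer than $2s$ effective derivatives so that the time integral converges. For the semigroup part, I would factor
\begin{equation*}
e^{\phi^t A^{1/2}}e^{-\frac{t\nu^2}{2}A} = e^{\al A^{1/2}} \cdot e^{\beta t A^{1/2} - \frac{t\nu^2}{2}A},
\end{equation*}
and note that the right-hand multiplier has symbol $\exp\bigl(\beta t(1+|\xi|^s) - \tfrac{t\nu^2}{2}(1+|\xi|^s)^2\bigr)$; writing $y = 1+|\xi|^s\geq 1$, the assumption $\beta < \nu^2/2$ places the unconstrained maximum of this quadratic at $y = \beta/\nu^2 < 1$, so on $\{y\geq 1\}$ the symbol is bounded by $e^{-t(\nu^2/2-\beta)}$. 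Extra Japanese-bracket powers $\jp{\xi}^k$ can be pulled out of the Gaussian piece at the standard cost $t^{-k/(2s)}$, supplying the parabolic smoothing that beats the derivative loss in $B$.

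For the bilinear estimate, on any path $\om\in\Omega_{\al,\be,\nu}$ the quantity $a \coloneqq \phi^t - \nu W^t$ is nonnegative. Writing out the Fourier transform
\begin{equation*}
\widehat{B^t(f,g)}(\xi) = i\xi\cdot e^{-\nu W^t(1+|\xi|^s)}\int \widehat{\Gamma^{-1}f}(\xi-\eta)\,\M(i\eta)\hat\g(\eta)\widehat{\Gamma^{-1}g}(\eta)\,d\eta,
\end{equation*}
using $\widehat{\Gamma^{-1}h}(\eta) = e^{\nu W^t(1+|\eta|^s)}\hat h(\eta)$ together with the subadditivity $1+|\xi|^s \leq (1+|\xi-\eta|^s)+(1+|\eta|^s)$ (valid for $s\leq 1$) and the assumption $|\xi\hat\g(\xi)|\lesssim|\xi|^{1-\ga}$ yields the pointwise weighted bound
\begin{equation*}
e^{\phi^t(1+|\xi|^s)}\bigl|\widehat{B^t(f,g)}(\xi)\bigr| \lesssim |\M||\xi|\int e^{\phi^t(1+|\xi-\eta|^s)}|\hat f(\xi-\eta)|\cdot|\eta|^{1-\ga}\cdot e^{\phi^t(1+|\eta|^s)}|\hat g(\eta)|\,d\eta.
\end{equation*}
This reduces $\|B^t(f,g)\|_{\G_{\phi^t}^{\ka,r}}$ to an $L^r_\xi$ bound on a weighted convolution, with the $\Gamma^{\pm 1}$ operators absorbed cleanly by the available budget $a\geq 0$ and the subadditivity.

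To close that convolution bound I would perform a standard high-low paraproduct split of $\jp{\xi}^{\sigma s}$: in the region $|\xi-\eta|\gtrsim|\xi|$ move $\jp{\xi}^{\sigma s}\lesssim\jp{\xi-\eta}^{\sigma s}$ onto the $f$-factor, and symmetrically onto $g$ in the complementary region, after which Young's convolution inequality turns the integral into a product of Fourier--Lebesgue norms. The multiplier $|\eta|^{1-\ga}$ is absorbed through H\"older and \cref{lem:Sob}. When $\ga\leq 1$ this multiplier is bounded near the origin and only grows polynomially at infinity; a single exponent $r$ close to $1$ (so that $\hat L^r$ behaves nearly algebraically) suffices, and the constraint $1-\ga\leq \sigma s$ ensures the polynomial growth is dominated by the Sobolev surplus. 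When $\ga>1$, $|\eta|^{1-\ga}$ has a non-integrable singularity at the origin, so the low-frequency portion of the convolution must be estimated by pairing two copies of the auxiliary $\hat L^{2q/(q-1)}$ norm via H\"older, the condition $q<d/(\ga-1)$ being exactly what is needed for $|\cdot|^{1-\ga}\in L^q_{\mathrm{loc}}$. This is precisely the reason the two-tiered space $\X_{\phi,\ga}^{\sigma,0,r,2q/(q-1)}$ is introduced.

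Putting the ingredients together, the mild formulation yields a bilinear estimate of the schematic form
\begin{equation*}
\Bigl\|\int_0^t e^{-\frac{(t-\tau)\nu^2}{2}A}B^\tau(\mu_1,\mu_2)\,d\tau\Bigr\|_{C_T^0 \X_{\phi,\ga}^{\sigma,0,r,2q/(q-1)}} \lesssim |\M|\,T^{\eta}\,\|\mu_1\|_{C_T^0\X}\|\mu_2\|_{C_T^0\X},
\end{equation*}
with $\eta = 1 - ((2-\sigma)s-1)/(2s) > 0$ precisely when $\sigma s < 2s-1$, which is the upper bound on $\sigma$ in the statement. Self-mapping on the ball of radius $2R$ and contraction in the difference $\mu_1-\mu_2$ then close for $T\lesssim (|\M|R)^{-2s/((2-\sigma)s-1)}$. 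The main obstacle is the bookkeeping: one must simultaneously balance (a) the derivative loss in $B$ against the parabolic smoothing, pinning the upper endpoint of $\sigma$; (b) the absorption of $\Gamma^{\pm 1}$ using the budget $\phi^t-\nu W^t\geq 0$; (c) the singular-vs-regular split at $\eta=0$ when $\ga>1$ and the accompanying H\"older pairing on the auxiliary norm; and (d) the Sobolev embedding \cref{lem:Sob}, which forces $r_0$ to be close to $1$ and determines $\sigma_0$. Once a consistent parameter range is identified, the Picard iteration itself is routine.
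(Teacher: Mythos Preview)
Your approach is essentially the paper's: mild formulation plus Banach fixed point, with the linear semigroup bound from $\beta<\nu^2/2$, the pointwise Fourier bound on $B^t$ via subadditivity of $|\cdot|^s$ and the budget $\phi^t-\nu W^t\geq 0$, a high--low frequency split to place $\jp{\xi}^{\sigma s}$ and absorb $|\eta|^{1-\ga}$ by Young/H\"older and \cref{lem:Sob}, and the auxiliary $\hat L^{2q/(q-1)}$ norm to handle the origin singularity when $\ga>1$. One slip: your time exponent should be $\eta = \frac{(2-\sigma)s-1}{2s} = 1 - \frac{\sigma s+1}{2s}$, not $1$ minus that quantity (your stated formula is always positive, whereas the correct one is positive precisely when $\sigma s<2s-1$, matching both your stated condition and the lifespan scaling $T\gtrsim (|\M|R)^{-2s/((2-\sigma)s-1)}$); also note you must close the Duhamel estimate in the $\G_\phi^{0,2q/(q-1)}$ component as well, not merely use it as an input, which the paper does via a separate but parallel Young/H\"older argument.
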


\begin{remark}
Compared to statement of \cref{thm:main}, where the Sobolev indices $\sigma_r,\sigma_q$ can be arbitrarily large, \cref{prop:lwp} contains the restriction $\sigma<\frac{2s-1}{s}$ and the second Sobolev index is set to zero. These restrictions are temporary: we only need them to first obtain the existence of a solution. Using a monotonicity argument in the next section, which is in the spirit of persistence of regularity arguments, we then allow for larger values of $\sigma$.
\end{remark}

\begin{remark}
A lower bound for for $r_0$ is explicitly worked out in the proof of \cref{prop:lwp}. See condition \ref{pCon} below and the ensuing analysis.
\end{remark}

\begin{remark}\label{rem:mcon}
The solutions constructed by \cref{prop:lwp} do not a priori conserve mass.\footnote{If we work on $\T^d$ and replace $(1+\Dm^s)$ with $\Dm^s$, then mass is conserved.} To see this, note that by using equation \eqref{eq:cp} and the fundamental theorem of calculus,
\begin{equation}\label{eq:ODEsol}
\frac{d}{dt}\int_{\R^d}\mu^t(x)dx = -\frac{\nu^2}{2}\int_{\R^d}(1+\Dm^s)^2\mu^t(x)dx = -\frac{\nu^2}{2}\int_{\R^d}\mu^t(x)dx,
\end{equation}
where the ultimate equality follows from expanding the square and using the Fourier transform. Solving the ODE \eqref{eq:ODEsol}, we find
\begin{equation}
\int_{\R^d}\mu^t(x)dx = e^{-\frac{\nu^2 t}{2}}\int_{\R^d}\mu^0(x)dx.
\end{equation}
So if $\mu^0$ has zero mass, then $\mu^t$ has zero mass for all times $t$. Otherwise, the magnitude of the mass is exponentially decreasing as $t\rightarrow \infty$. Recalling the mass/diffusion threshold for the PKS equation, this decreasing of the mass of our solutions may provide some intuition why global existence is ultimately possible.
\end{remark}

\subsection{Gevrey embeddings}\label{ssec:lwpGev}
Before proceeding to the contraction mapping step, we record some elementary embeddings satisfied by the spaces $\G_a^{\ka,r}$.

\begin{lemma}\label{lem:Gemb}
If $a'\geq a\geq 0$ and $\ka'\geq \ka$, then
\begin{align}
\|f\|_{\G_{a}^{\ka,r}} \leq e^{a-a'}\|f\|_{\G_{a'}^{\ka',r}}.
\end{align}
If $\ka'\geq \ka$ and $a'>a\geq 0$, then
\begin{equation}
\|f\|_{\G_{a}^{\ka',r}} \leq  \frac{\lceil{\ka'-\ka}\rceil !}{(a'-a)^{\lceil{\ka'-\ka}\rceil}} \|f\|_{\G_{a'}^{\ka,r}}.
\end{equation}
where $\lceil{\cdot}\rceil$ denotes the usual ceiling function.
\end{lemma}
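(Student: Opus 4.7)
The plan is to prove both embeddings by working pointwise on the Fourier side and then taking the $L^r$ norm, since all three norms $\|\cdot\|_{\G_a^{\ka,r}}$, $\|\cdot\|_{\G_{a'}^{\ka',r}}$, and $\|\cdot\|_{\G_{a'}^{\ka,r}}$ are weighted $L^r$ norms of $\hat{f}$ with explicit, positive multiplier weights. So in each case I reduce the claim to a uniform pointwise bound on the ratio of weights.

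For the first inequality, write out the ratio of the weights defining $\G_a^{\ka,r}$ and $\G_{a'}^{\ka',r}$ as
\begin{equation*}
\frac{\jp{\xi}^{\ka s}\,e^{a(1+|\xi|^s)}}{\jp{\xi}^{\ka' s}\,e^{a'(1+|\xi|^s)}} \;=\; \jp{\xi}^{(\ka-\ka')s}\,e^{(a-a')(1+|\xi|^s)}.
\end{equation*}
Since $\jp{\xi}\geq 1$ and $\ka\leq \ka'$, the first factor is at most $1$. Since $1+|\xi|^s\geq 1$ and $a\leq a'$, the exponential factor is at most $e^{a-a'}$. Multiplying through by $|\hat f(\xi)|$ and taking $L^r_\xi$ norms yields the first estimate.

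For the second inequality, the ratio of weights becomes $\jp{\xi}^{(\ka'-\ka)s}\,e^{-(a'-a)(1+|\xi|^s)}$. Set $n\coloneqq \lceil \ka'-\ka\rceil$ and $t\coloneqq a'-a>0$. The key point is the elementary bound $y^n e^{-ty}\leq n!/t^n$ for $y,t\geq 0$, which follows immediately from the Taylor series $e^{ty}\geq (ty)^n/n!$. To apply it with $y = 1+|\xi|^s$, I first trade Japanese brackets for $1+|\xi|^s$ using the fact that $s\in(\tfrac12,1]$: since $x\mapsto x^s$ is subadditive on $[0,\infty)$ for $s\in(0,1]$, one has $\jp{\xi}^s\leq (1+|\xi|)^s\leq 1+|\xi|^s$, and hence $\jp{\xi}^{(\ka'-\ka)s}\leq (1+|\xi|^s)^{\ka'-\ka}\leq (1+|\xi|^s)^n$ (the second inequality uses $1+|\xi|^s\geq 1$). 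Combining with the $y^n e^{-ty}$ bound gives
\begin{equation*}
\jp{\xi}^{(\ka'-\ka)s}\,e^{-(a'-a)(1+|\xi|^s)}\;\leq\;\frac{n!}{(a'-a)^n},
\end{equation*}
and taking $L^r_\xi$ norms of the weighted integrand yields the second estimate.

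The only slightly nontrivial step is the Japanese bracket reduction $\jp{\xi}^s\leq 1+|\xi|^s$, which does require $s\leq 1$; but since we work in that regime throughout the paper, this is not a real obstacle. Everything else is just the pointwise Fourier-side calculation followed by Minkowski/monotonicity of the $L^r$ norm, so no genuine obstruction is expected.
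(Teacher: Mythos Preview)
Your proof is correct and follows essentially the same pointwise-weight approach as the paper: both reduce each inequality to a uniform bound on the Fourier multiplier ratio, use $\jp{\xi}^{s}\leq 1+|\xi|^{s}$, and invoke the Taylor-series estimate $y^{n}e^{-ty}\leq n!/t^{n}$. The only cosmetic difference is that you justify $\jp{\xi}^{s}\leq 1+|\xi|^{s}$ via subadditivity of $x\mapsto x^{s}$ for $s\leq 1$, whereas the paper phrases it as the $\ell^{2}\leq \ell^{s}$ inequality (valid for $s\leq 2$); both are fine in the regime $s\leq 1$ used throughout.
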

\begin{proof}
First, observe that for any $a'\geq a\geq 0$,
\begin{equation}
\|f\|_{\G_{a}^{\ka,r}} = \|e^{(a-a')A^{1/2}}e^{a' A^{1/2}}f\|_{\hat{W}^{\ka s,r}} \leq e^{a-a'}\|f\|_{\G_{a'}^{\ka,r}},
\end{equation}
since $e^{(a-a')(1+|\xi|^s)} \leq e^{a-a'}$. Also, for any $\ka'\geq \ka$, we trivially have from $\|\cdot\|_{\hat{W}^{\ka s, r}}\leq \|\cdot\|_{\hat{W}^{\ka' s,r}}$ that
\begin{equation}
\|f\|_{\G_{a}^{\ka,r}} \leq \|f\|_{\G_{a}^{\ka',r}}.
\end{equation}
Now for $\ka'\geq \ka$, we have
\begin{align}
\|f\|_{\G_{a}^{\ka',r}} = \|\jp{\nabla}^{\ka' s}e^{a A^{1/2}}f\|_{\hat{L}^r} = \|\jp{\nabla}^{(\ka'-\ka)s}e^{(a-a') A^{1/2}} \jp{\nabla}^{\ka s}e^{a' A^{1/2}}f\|_{\hat{L}^r}.
\end{align}
Observe from the power series for $z\mapsto e^z$ that
\begin{equation}
\jp{\xi}^{(\ka'-\ka) s}e^{(a-a')(1+|\xi|^s)} \leq (1+|\xi|^s)^{\ka'-\ka}e^{(a-a')(1+|\xi|^s)} \leq \frac{\lceil{\ka'-\ka}\rceil !}{(a'-a)^{\lceil{\ka'-\ka}\rceil}},
\end{equation}
where $\lceil{\cdot}\rceil$ is the usual ceiling function. Implicitly, we have used $\|\cdot\|_{\ell^2}\leq \|\cdot\|_{\ell^s}$, since $s\leq 2$. Therefore,
\begin{equation}
\|f\|_{\G_{a}^{\ka',r}} \leq  \frac{\lceil{\ka'-\ka}\rceil !}{(a'-a)^{\lceil{\ka'-\ka}\rceil}} \|f\|_{\G_{a'}^{\ka,r}}.
\end{equation}
\end{proof}

\subsection{Contraction mapping argument}\label{ssec:lwpcm}
Next, we define the map
\begin{equation}
\mu^t\mapsto (\Tc\mu)^t \coloneqq e^{-\frac{t\nu^2}{2}A}\mu^0 - \int_0^t e^{-\frac{(t-\tau)\nu^2}{2}A} B^\tau(\mu^\tau,\mu^\tau)d\tau.
\end{equation}
We check that this map is well-defined on $C_T^0\X_{\phi,\ga}^{\sigma,0,r,\frac{2q}{q-1}}$ for $\phi^t=\al+\be t$, with $\al,\be,\sigma,r,q,\ga>0$ satisfying the conditions in the statement of \cref{prop:lwp}. To this end, we assume here and throughout this subsection that we have a realization of $W$ belonging to $\Omega_{\al,\be,\nu}$.

\begin{lemma}\label{lem:cmlin}
If $\beta<\frac{\nu^2}{2}$, then for any $1\leq r\leq\infty$, $0<s\leq 1$, $\sigma\in\R$, and $\al>0$, it holds that 
\begin{equation}
\|e^{-\frac{t\nu^2}{2}A}\mu^0\|_{\G_{\phi^t}^{\sigma,r}} \leq \|\mu^0\|_{\G_{\al}^{\sigma,r}} \qquad \forall t\geq 0.
\end{equation}
\end{lemma}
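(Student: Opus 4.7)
The plan is to reduce the inequality to a pointwise estimate on the Fourier side, exploiting that every operator involved is a Fourier multiplier.

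First I would unpack the Gevrey norm on both sides according to the definition in \eqref{eq:Gdef}. The left-hand side becomes
\[
\|e^{-\frac{t\nu^2}{2}A}\mu^0\|_{\G_{\phi^t}^{\sigma,r}} = \bigl\|\jp{\cdot}^{\sigma s}\, e^{\phi^t(1+|\cdot|^s)}\, e^{-\frac{t\nu^2}{2}(1+|\cdot|^s)^2}\, \widehat{\mu^0}\bigr\|_{L^r},
\]
while the right-hand side is $\|\jp{\cdot}^{\sigma s}\, e^{\al(1+|\cdot|^s)}\,\widehat{\mu^0}\|_{L^r}$. Since all the operators are Fourier multipliers, they commute, and it suffices to show that the scalar multiplier
\[
m^t(\xi) \coloneqq e^{(\phi^t - \al)(1+|\xi|^s)}\, e^{-\frac{t\nu^2}{2}(1+|\xi|^s)^2}
\]
satisfies $|m^t(\xi)| \leq 1$ for every $\xi\in\R^d$ and every $t \geq 0$; the conclusion then follows pointwise inside the $L^r(d\xi)$ norm.

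Next, substituting $\phi^t - \al = \beta t$ and writing $y \coloneqq 1+|\xi|^s \geq 1$, the bound $m^t(\xi)\leq 1$ is equivalent to
\[
t\, y \left(\beta - \tfrac{\nu^2}{2} y\right) \leq 0.
\]
Since $t \geq 0$ and $y \geq 1$, and since the hypothesis $\beta < \nu^2/2$ gives $\beta - \tfrac{\nu^2}{2} y \leq \beta - \tfrac{\nu^2}{2} < 0$, the inequality holds. (Note the hypothesis $0 < s \leq 1$ is not actually needed for this step; only the integrability index $r$ enters trivially through monotonicity of the $L^r$ norm.)

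There is no real obstacle here: the whole content is the pointwise arithmetic comparison on Fourier side, enabled by the fact that $A^{1/2}$ and $A$ are smooth functions of $|\xi|$ and so the semigroup $e^{-tA\nu^2/2}$ is well-defined as a contraction on any weighted Fourier space. The condition $\beta < \nu^2/2$ is precisely what is needed so that the linear-in-$t$ growth of the Gevrey weight $e^{\phi^t A^{1/2}}$ is dominated by the parabolic smoothing $e^{-\frac{t\nu^2}{2}A}$ uniformly in the frequency $\xi$; this is the only place in the argument where this hypothesis is invoked.
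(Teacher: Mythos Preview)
Your proof is correct and follows essentially the same approach as the paper: unpack the Gevrey norm, combine the Fourier multipliers, and verify the pointwise bound $e^{t(1+|\xi|^s)(\beta - \frac{\nu^2}{2}(1+|\xi|^s))}\leq 1$ using $1+|\xi|^s\geq 1$ and $\beta<\nu^2/2$. Your presentation is slightly more explicit (introducing $y=1+|\xi|^s$ and isolating the multiplier $m^t$), and your side remark that the hypothesis $0<s\leq 1$ is not actually used here is accurate.
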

\begin{proof}
It is straightforward from the definition of $\phi^t$ that
\begin{align}
\|e^{-\frac{t\nu^2}{2}A}\mu^0\|_{\G_{\phi^t}^{\sigma,r}} = \|e^{t(\be  - \frac{\nu^2}{2}A^{1/2})A^{1/2}}e^{\al A^{1/2}}\mu^0\|_{\hat{W}^{\sigma s,r}}
\end{align}
If $\beta <\frac{\nu^2}{2}$, then since $(1+|\xi|^s)\geq 1$, the right-hand is $\leq \|e^{\al A^{1/2}}\mu^0\|_{\hat{W}^{\sigma s,r}} = \|\mu^0\|_{\G_{\al}^{\sigma,r}}$.
\end{proof}

Next, we observe from the bilinearity of $B$ that 
\begin{equation}
B(\mu_1,\mu_1) - B(\mu_2,\mu_2) = B(\mu_1-\mu_2,\mu_1) + B(\mu_2,\mu_1-\mu_2),
\end{equation}
and therefore
\begin{equation}
\paren*{\Tc(\mu_1) - \Tc(\mu_2)}^t = -\int_0^t e^{-\frac{\nu^2(t-\tau)}{2}A}\paren*{B^\tau(\mu_1^\tau-\mu_2^\tau,\mu_1^\tau) + B^\tau(\mu_2^\tau,\mu_1^\tau-\mu_2^\tau)}d\tau.
\end{equation}

\begin{lemma}\label{lem:cmnl}
Let $d\geq 1$, $0<\ga<d+1$, $\frac{1}{2}<s\leq 1$. If $\ga>1$, also assume that we are given $1\leq q < \frac{d}{\ga-1}$. Additionally, suppose that $\beta < \frac{\nu^2}{2}$.

There exists an $r_0 \in [1,\infty]$, depending on $d,\ga,s$, such that the following holds. For any $1\leq r \leq r_0$, there exists $\sigma_0 \in (0,\frac{2s-1}{s})$ depending on $d,\ga,s,r$, such that for any $\sigma \in (\sigma_0, \frac{2s-1}{s})$ with $1-\ga\leq \sigma s$, there exists a constant $C$ depending only on $d,\ga,r,q,\sigma,s,\beta,\nu$, such that for any $T>0$,
\begin{multline}
\left\|\int_0^t e^{-\frac{\nu^2(t-\tau)}{2}A}B^\tau(\mu_1^\tau,\mu_2^\tau)d\tau\right\|_{C_T^0\G_{\phi}^{\sigma,r}} \leq C|\M|\paren*{T+T^{1-\frac{\sigma s+1}{2s}}}\Bigg(\Big(\|\mu_1\|_{C_T^0\G_{\phi}^{0,r}}\|\mu_2\|_{C_T^0\G_{\phi}^{0,\frac{2q}{q-1}}} \\
+ \|\mu_1\|_{C_T^0\G_{\phi}^{0,\frac{2q}{q-1}}} \|\mu_2\|_{C_T^0\G_{\phi}^{0,\frac{2q}{q-1}}}\Big)\indic_{\ga>1}  + \|\mu_1\|_{C_T^0\G_{\phi}^{\sigma,r}} \|\mu_2\|_{C_T^0\G_{\phi}^{\sigma,r}}\Bigg).
\end{multline}
\end{lemma}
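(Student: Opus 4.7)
The plan is to work on the Fourier side throughout. Introduce the dressed variables $\widetilde\mu_j^\tau \coloneqq e^{\phi^\tau(1+\Dm^s)}\mu_j^\tau$, so that $\|\mu_j^\tau\|_{\G_{\phi^\tau}^{\ka,r}} = \|\widetilde\mu_j^\tau\|_{\hat{W}^{\ka s,r}}$. A direct computation of the Fourier transform of $B^\tau(\mu_1^\tau,\mu_2^\tau)$, using that both $e^{-\frac{\nu^2(t-\tau)}{2}A}$ and $\Gamma^{\pm\tau}$ are scalar Fourier multipliers, shows that the pointwise symbol of $e^{\phi^t(1+\Dm^s)}e^{-\frac{\nu^2(t-\tau)}{2}A}B^\tau(\mu_1^\tau,\mu_2^\tau)$, written as a convolution integral over $\eta$, carries the exponent
\begin{equation*}
\phi^t(1+|\xi|^s) - \tfrac{\nu^2(t-\tau)}{2}(1+|\xi|^s)^2 - \nu W^\tau(1+|\xi|^s) + \nu W^\tau(1+|\xi-\eta|^s) + \nu W^\tau(1+|\eta|^s).
\end{equation*}
The crucial algebraic step is to split $\phi^t = \phi^\tau + \be(t-\tau)$ and combine the subadditivity $1+|\xi|^s\leq (1+|\xi-\eta|^s)+(1+|\eta|^s)$ (valid since $s\leq 1$) with the pointwise bound $\phi^\tau - \nu W^\tau \geq 0$ (guaranteed by $W\in\Omega_{\al,\be,\nu}$) to dominate this exponent by
\begin{equation*}
\be(t-\tau)(1+|\xi|^s) - \tfrac{\nu^2(t-\tau)}{2}(1+|\xi|^s)^2 + \phi^\tau(1+|\xi-\eta|^s) + \phi^\tau(1+|\eta|^s).
\end{equation*}
This effectively transfers the Gevrey weights from $\mu_j^\tau$ onto $\widetilde\mu_j^\tau$, leaving only an $\xi$-factor to dispose of.

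The next step is to extract smoothing from this leftover factor. Since $\be<\nu^2/2$, setting $x\coloneqq 1+|\xi|^s\geq 1$ one has $\be(t-\tau)x - \frac{\nu^2(t-\tau)}{2}x^2 \leq -\frac{(\nu^2-2\be)(t-\tau)}{2}x^2$, and then a routine calculus bound for $x^\al e^{-cx^2}$ yields
\begin{equation*}
(1+|\xi|^s)^{\al}\,e^{\be(t-\tau)(1+|\xi|^s)-\frac{\nu^2(t-\tau)}{2}(1+|\xi|^s)^2} \lesssim_{\al,\be,\nu} 1 + (t-\tau)^{-\al/2}, \qquad \al\geq 0.
\end{equation*}
I plan to apply this with $\al=\sigma+\tfrac{1}{s}$, using $\jp{\xi}^{\sigma s}|\xi|\lesssim (1+|\xi|^s)^{\sigma+1/s}$ to absorb both the Sobolev weight from $\hat W^{\sigma s,r}$ and the $|\xi|$ coming from the divergence in $B^\tau$. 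The resulting time singularity $(t-\tau)^{-(\sigma s+1)/(2s)}$ is integrable on $[0,t]$ precisely when $\sigma s+1<2s$, i.e.\ $\sigma<\frac{2s-1}{s}$, and integration produces the $T^{1-(\sigma s+1)/(2s)}$ term in the claim. The complementary $T$ term arises by instead distributing $|\xi|$ onto the inputs via $|\xi|\lesssim|\xi-\eta|+|\eta|$ and using the trivial bound $\leq 1$ on the heat factor.

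With the exponential weights neutralized, the estimate reduces to a bilinear convolution bound of the schematic form
\begin{equation*}
\Bigl\|\jp{\xi}^{\sigma s}\int_{\R^d}|\eta|^{1-\ga}|\hat{\widetilde\mu}_1^\tau(\xi-\eta)|\,|\hat{\widetilde\mu}_2^\tau(\eta)|\,d\eta\Bigr\|_{L^r_\xi}\lesssim |\M|\cdot (\text{bilinear product of Fourier-Lebesgue norms}),
\end{equation*}
where $|\eta|^{1-\ga}$ comes from the hypothesis $|\xi\hat\g(\xi)|\lesssim|\xi|^{1-\ga}$. I would handle this by a paraproduct decomposition of $\R^d_\eta$ into the Low-High ($|\xi-\eta|\lesssim|\eta|$), High-Low ($|\eta|\lesssim|\xi-\eta|$), and High-High ($|\xi|\lesssim\min(|\xi-\eta|,|\eta|)$) regions, placing derivatives on the high-frequency factor and applying Young's convolution inequality combined with the Fourier-Lebesgue Sobolev embedding \cref{lem:Sob}. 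Forcing $\hat W^{\sigma s,r}$ to embed into $\hat L^1$ (so that the convolution closes) determines both the lower bound $\sigma_0$ on $\sigma$ and the upper bound $r_0$ on $r$: one needs $r$ close to $1$, where $\hat W^{\sigma s,r}$ approaches an algebra structure under pointwise product on the Fourier side.

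The main obstacle, and the origin of the case split at $\ga=1$, is controlling $|\eta|^{1-\ga}$ near the origin. For $\ga\leq 1$ the symbol is bounded and only the $\G_\phi^{\sigma,r}$ norms appear. For $\ga>1$, however, $|\eta|^{1-\ga}$ is only locally integrable against $L^{q'}_{\mathrm{loc}}$ functions with $q'=\tfrac{q}{q-1}<\tfrac{d}{d-(\ga-1)}$; H\"older on the unit ball therefore forces the low-frequency portion of the inputs to be estimated in $\hat L^{2q/(q-1)}$, which is precisely the auxiliary Gevrey norm $\|\mu_j\|_{\G_\phi^{0,2q/(q-1)}}$ appearing with the indicator $\indic_{\ga>1}$. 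Since either input can contribute the low-frequency singular piece, one obtains two cross-terms of the form $\|\mu_1\|_{\G_\phi^{0,r}}\|\mu_2\|_{\G_\phi^{0,2q/(q-1)}}$ and $\|\mu_1\|_{\G_\phi^{0,2q/(q-1)}}\|\mu_2\|_{\G_\phi^{0,2q/(q-1)}}$. Finally, applying Minkowski's integral inequality to pull the $\tau$-integral outside the $L^r_\xi$ norm, integrating in $\tau\in[0,t]$, and taking $\sup_{t\in[0,T]}$ assembles the stated bound with constants absorbing the dependence on $d,\ga,r,q,\sigma,s,\be,\nu$.
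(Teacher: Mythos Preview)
Your proposal is correct and follows essentially the same strategy as the paper: Minkowski, then the algebraic reduction of the exponential weights via subadditivity of $|\cdot|^s$ and $\phi^\tau-\nu W^\tau\geq 0$, then heat smoothing, then a bilinear Young/Sobolev estimate with a separate H\"older argument on $\{|\eta|\leq 1\}$ when $\ga>1$. The organizational differences are minor. First, rather than your uniform bound $(1+|\xi|^s)^{\sigma+1/s}e^{-\d(t-\tau)(1+|\xi|^s)^2}\lesssim 1+(t-\tau)^{-(\sigma s+1)/(2s)}$, the paper splits the outer integral into $|\xi|\leq 1$ (weight trivially bounded, producing the $T$ term) and $|\xi|>1$ (heat factor absorbs the weight, producing $T^{1-(\sigma s+1)/(2s)}$); your side remark about distributing $|\xi|\lesssim|\xi-\eta|+|\eta|$ is therefore unnecessary, since the ``$1$'' in your bound already furnishes the $T$ contribution. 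Second, for the residual convolution the paper does not set up a full Low--High/High--Low/High--High paraproduct but simply applies Young's inequality with a free exponent $1\leq p\leq r$ together with \cref{lem:Sob}, which reaches the same parameter constraints with less bookkeeping. Finally, your explanation of the two $\ga>1$ cross terms is slightly off: the singularity $|\eta|^{1-\ga}$ is always attached to $\mu_2$, and the two auxiliary terms arise from the $|\xi|\leq 1$ versus $|\xi|>1$ split of the \emph{output} frequency (the former forces both factors into $\hat L^{2q/(q-1)}$ via Cauchy--Schwarz after H\"older, the latter only one), not from symmetrizing over which input carries the low-frequency piece.
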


\begin{remark}
We give bounds on the size of the threshold $r_0$ from the statement of \cref{lem:cmnl} during the proof of the lemma. We have omitted them from the statement in order to simplify the presentation. There is an extensive amount of algebra to determine the final conditions on the parameters, but if the reader is not interested in this optimization, they can just consider $r=s=1$, which is straightforward to check.
\end{remark}

\begin{proof}[Proof of \cref{lem:cmnl}]
We make the change of unknown $\mu_j^t\coloneqq e^{-\phi^t A^{1/2}}\jp{\nabla}^{-\sigma s} \rho_j^t$, so that
\begin{equation}
\|\rho_j^t\|_{\hat{L}^r} = \|\mu_j^t\|_{\G_{\phi^t}^{\sigma,r}}.
\end{equation}
By Minkowski's inequality, we see that
\begin{align}
\|e^{\phi^t A^{1/2}} \int_0^{t}e^{-\frac{\nu^2(t-\tau)}{2}A}B^\tau(\mu_1^\tau,\mu_2^\tau) d\tau\|_{\hat{W}^{\sigma s,r}} \leq \int_0^t \|e^{\phi^t A^{1/2} - \frac{\nu^2(t-\tau)}{2}A}B^\tau(\mu_1^\tau,\mu_2^\tau)\|_{\hat{W}^{\sigma s,r}}d\tau,
\end{align}
and by definition of the $\hat{W}^{\sigma s,r}$ norm, the preceding right-hand side equals
\begin{multline}
\int_0^t \Bigg(\int_{\R^d} e^{r\phi^t(1+|\xi|^s) - \nu^2(t-\tau)(1+|\xi|^s)^2}\jp{\xi}^{r\sigma s} \left|e^{-\nu W^\tau(1+|\xi|^s)}\int_{\R^d}\frac{(\xi\cdot\M\eta)\hat{\g}(\eta)}{\jp{\xi-\eta}^{\sigma s}\jp{\eta}^{\sigma s}} \right.\\
\left. e^{-\phi^\tau(2+|\xi-\eta|^s +|\eta|^s)}e^{\nu W^\tau(2+|\xi-\eta|^s+|\eta|^s)}\hat{\rho}_1^\tau(\xi-\eta)\hat{\rho}_2^\tau(\eta) d\eta \right|^r\Bigg)^{1/r}d\tau.
\end{multline}
Using $\phi^t-\phi^\tau = \be(t-\tau)$, the preceding expression is controlled by
\begin{multline}
\int_0^t\Bigg(\int_{\R^d}e^{r(t-\tau)(1+|\xi|^s)(\beta-\frac{\nu^2}{2}(1+|\xi|^s))}\jp{\xi}^{r\sigma s}  \left|\int_{\R^d} e^{(\phi^\tau-\nu W^{\tau})(|\xi|^s-|\xi-\eta|^s-|\eta|^s-1)} \right.\\
\left.\frac{|\xi\cdot\M\eta| |\hat{\g}(\eta)|}{\jp{\xi-\eta}^{\sigma s}\jp{\eta}^{\sigma s}}\left|\hat{\rho}_1^\tau(\xi-\eta)\right|\left|\hat{\rho}_2^\tau(\eta)\right| d\eta \right|^r d\xi\Bigg)^{1/r}d\tau.
\end{multline}
Since $0<s\leq 1$, and therefore by $\|\cdot\|_{\ell^1}\leq \|\cdot\|_{\ell^s}$, and $\phi^\tau-\nu W^\tau\geq 0$ for all $0\leq\tau\leq t$ by assumption, it follows from the triangle inequality that
\begin{equation}
e^{(\phi^\tau-\nu W^{\tau})(|\xi|^s-|\xi-\eta|^s-|\eta|^s-1)} \leq 1.
\end{equation}
Now set $\d \coloneqq \min\{\frac{1}{2}(\frac{\nu^2}{2}-\be), \frac{\nu^2}{4}\}$. By assumption that $\beta < \frac{\nu^2}{2}$, we have
\begin{equation}
e^{r(t-\tau)(1+|\xi|^s)(\beta-\frac{\nu^2}{2}(1+|\xi|^s))} \leq e^{-r\d(t-\tau)(1+|\xi|^s)^2}.
\end{equation}
With these observations, we reduce to estimate the expression
\begin{equation}\label{eq:LWPnte}
\int_0^t \Bigg(\int_{\R^d} e^{-r\d(t-\tau)(1+|\xi|^s)^2}\jp{\xi}^{r\sigma s}\left|\int_{\R^d}\frac{|\xi\cdot\M\eta| |\hat{\g}(\eta)|}{\jp{\xi-\eta}^{\sigma s}\jp{\eta}^{\sigma s}}\left|\hat{\rho}_1^\tau(\xi-\eta)\right|\left|\hat{\rho}_2^\tau(\eta)\right| d\eta \right|^r d\xi\Bigg)^{1/r} d\tau
\end{equation}

To deal with the inhomogeneity of $(1+|\xi|^s)^2$, we split the integral with respect to $\xi$ into the low-frequency piece $|\xi|\leq 1$ and the high-frequency piece $|\xi|>1$. At low frequency, we can crudely estimate everything directly to find
\begin{multline}
\Bigg(\int_{|\xi|\leq 1} e^{-r\d(t-\tau)(1+|\xi|^s)^2}\left|\int_{\R^d}\frac{|\xi\cdot\M\eta| |\hat{\g}(\eta)|}{\jp{\xi-\eta}^{\sigma s}\jp{\eta}^{\sigma s}}\left|\hat{\rho}_1^\tau(\xi-\eta)\right|\left|\hat{\rho}_2^\tau(\eta)\right| d\eta \right|^r d\xi\Bigg)^{1/r} \\
\lesssim_\ga |\M|\Bigg(\int_{|\xi|\leq 1}\left|\int_{\R^d}\jp{\xi-\eta}^{-\sigma s}|\eta|^{1-\ga}\jp{\eta}^{-\sigma s}\left| \hat{\rho}_1^\tau(\xi-\eta)\right|\left|\hat{\rho}_2^\tau(\eta)\right| d\eta \right|^r d\xi \Bigg)^{1/r}.
\end{multline}
Above, we have used our assumption that $|\eta\hat{\g}(\eta)|\lesssim_\ga \eta^{1-\ga}$. If $1-\ga\geq 0$, then $|\eta|^{1-\ga}\jp{\eta}^{-\sigma s} \leq \jp{\eta}^{1-\ga -\sigma s}$. If $1-\ga<0$, then we have to be careful about singularities at low frequency. More precisely, by H\"older's inequality, we can control the $L_{\xi}^r$ norm by the $L_{\xi}^\infty$ norm. For $q(1-\ga)>-d$, H\"older's inequality gives
\begin{align}
&\left|\int_{|\eta|\leq 1}\jp{\xi-\eta}^{-\sigma s}|\eta|^{1-\ga}\jp{\eta}^{-\sigma s} |\hat{\rho}_1^\tau(\xi-\eta)| |\hat{\rho}_2^\tau(\eta)|d\eta\right| \nn\\
&\leq \||\cdot|^{1-\ga}1_{B(0,1)} \|_{L^q} \|\jp{\xi-\cdot}^{-\sigma s}\jp{\cdot}^{-\sigma s}\hat{\rho}_1^\tau(\xi-\cdot) \hat{\rho}_2^\tau1_{B(0,1)}\|_{L^{\frac{q}{q-1}}} \nn\\
&\lesssim_{d,\ga,q} \|\hat{\rho}_1^\tau\|_{\hat{W}^{-\sigma s,\frac{2q}{q-1}}} \|\hat{\rho}_2^\tau\|_{\hat{W}^{-\sigma s,\frac{2q}{q-1}}} \nn\\
&\lesssim \|e^{\phi^\tau A^{1/2}}\mu_1^\tau\|_{\hat{W}^{0,\frac{2q}{q-1}}}\|e^{\phi^\tau A^{1/2}}\mu_2^\tau\|_{\hat{W}^{0,\frac{2q}{q-1}}}. \label{eq:etalwplf}
\end{align}
The remaining expression
\begin{align}
&\Bigg(\int_{|\xi|\leq 1} \left|\int_{|\eta|\geq 1}\jp{\xi-\eta}^{-\sigma s}|\eta|^{1-\ga}\jp{\eta}^{-\sigma s}\left|\hat{\rho}_1^\tau(\xi-\eta)\right|\left|\hat{\rho}_2^\tau(\eta)\right| d\eta \right|^r d\xi\Bigg)^{1/r} 
\end{align}
is handled by estimate \eqref{eq:lwphf} below.
\begin{comment}
 \nn\\
&\lesssim_{d,s,\sigma,\ga} \|\jp{\cdot}^{\max\{-\sigma s,1-\ga-\sigma s\}}\hat{\rho}_1^\tau\|_{L^r} \|\jp{\cdot}^{\min\{-\sigma s,1-\ga-\sigma s\}}\hat{\rho}_2^\tau\|_{L^{r'}} \nn\\
&= \|e^{\phi^\tau A^{1/2}}\mu_1^\tau\|_{\hat{W}^{\sigma s,r}}\|e^{\phi^\tau A^{1/2}}\mu_2^\tau\|_{\hat{W}^{0,r'}}, \label{eq:lwplf}
\end{align}
where $\frac{1}{r}+\frac{1}{r'}=1$. Putting together \eqref{eq:etalwplf} and \eqref{eq:lwplf}, we have shown
\begin{multline}\label{eq:lwplff}
\Bigg(\int_{|\xi|\leq 1} e^{-r\d(t-\tau)(1+|\xi|^s)^2}\left|\int_{\R^d}\frac{|\xi\cdot\M\eta| |\hat{\g}(\eta)|}{\jp{\xi-\eta}^{\sigma s}\jp{\eta}^{\sigma s}}\left|\hat{\rho}_1^\tau(\xi-\eta)\right|\left|\hat{\rho}_2^\tau(\eta)\right| d\eta \right|^r d\xi\Bigg)^{1/r} \\
\lesssim_{d,s,\sigma,\ga,r} \|e^{\phi^\tau A^{1/2}}\mu_1^\tau\|_{\hat{W}^{0,\infty}}\|e^{\phi^\tau A^{1/2}}\mu_2^\tau\|_{\hat{W}^{0,\infty}}\indic_{\ga>1} + \|e^{\phi^\tau A^{1/2}}\mu_1^\tau\|_{\hat{W}^{\sigma s,r}}\|e^{\phi^\tau A^{1/2}}\mu_2^\tau\|_{\hat{W}^{0,r'}}.
\end{multline}
\end{comment}

At high frequency, we trivially have
\begin{equation}
e^{-r\d(t-\tau)(1+|\xi|^s)^2}\leq e^{-r\d(t-\tau)|\xi|^{2s}}.
\end{equation}
Writing $|\xi| = (t-\tau)^{-\frac{1}{2s}}(t-\tau)^{\frac{1}{2s}}|\xi|$, it follows from the power series for $z\mapsto e^{z}$ that
\begin{equation}
e^{-r\d(t-\tau)(1+|\xi|^s)^2}\jp{\xi}^{r\sigma s}|\xi|^{r} \lesssim_\d (t-\tau)^{-\frac{r(\sigma s+1)}{2s}}.
\end{equation}
Hence,
\begin{multline}
\Bigg(\int_{|\xi|> 1} e^{-r\d(t-\tau)(1+|\xi|^s)^2} \jp{\xi}^{r\sigma s}\left|\int_{\R^d}\frac{|\xi\cdot\M\eta| |\hat{\g}(\eta)|}{\jp{\xi-\eta}^{\sigma s}\jp{\eta}^{\sigma s}}\left|\hat{\rho}_1^\tau(\xi-\eta)\right|\left|\hat{\rho}_2^\tau(\eta)\right| d\eta \right|^r d\xi\Bigg)^{1/r} \\
\lesssim_{\ga,\d,\sigma,s} |\M|(t-\tau)^{-\frac{(\sigma s+1)}{2s}} \Bigg(\int_{|\xi|>1} \left|\int_{\R^d} \jp{\xi-\eta}^{-\sigma s}|\eta|^{1-\ga}\jp{\eta}^{-\sigma s}\left|\hat{\rho}_1^\tau(\xi-\eta)\right|\left|\hat{\rho}_2^\tau(\eta)\right| d\eta \right|^r d\xi\Bigg)^{1/r}.
\end{multline}
As before, we have to be careful about singularities in $\eta$ at low frequency if $1-\ga< 0$. Observe from Young's inequality that
\begin{align}
&\Bigg(\int_{|\xi|>1} \left|\int_{|\eta|\leq 1} \jp{\xi-\eta}^{-\sigma s}|\eta|^{1-\ga}\jp{\eta}^{-\sigma s}\left|\hat{\rho}_1^\tau(\xi-\eta)\right|\left|\hat{\rho}_2^\tau(\eta)\right| d\eta \right|^r d\xi\Bigg)^{1/r} \nn\\
&\leq \|\jp{\cdot}^{-\sigma s}\hat{\rho}_1^\tau\|_{L^r} \||\cdot|^{1-\ga}\hat{\rho}_2^\tau 1_{B(0,1)}\|_{L^1} \nn\\
&\lesssim_{d,\ga,\sigma,s,q} \|e^{\phi^\tau A^{1/2}} \mu_1^\tau\|_{\hat{W}^{0,r}} \|e^{\phi^\tau A^{1/2}}\mu_2^\tau\|_{\hat{W}^{0,\frac{2q}{q-1}}}, \label{eq:etalwphf}
\end{align}
for any $q<\frac{d}{\ga-1}$. For $\eta$ at high frequency, we have by Young's inequality followed by \cref{lem:Sob} that any $1\leq p\leq r$,
\begin{align}
&\Bigg(\int_{\R^d} \left|\int_{|\eta|\geq 1} \jp{\xi-\eta}^{-\sigma s}\jp{\eta}^{1-\ga-\sigma s}\left|\hat{\rho}_1^\tau(\xi-\eta)\right|\left|\hat{\rho}_2^\tau(\eta)\right| d\eta \right|^r d\xi\Bigg)^{1/r} \nn\\
&\leq \|\jp{\cdot}^{-\sigma s}\hat{\rho}_1^\tau\|_{L^p} \|\jp{\cdot}^{1-\ga-\sigma s} \hat{\rho}_2^\tau\|_{L^{\frac{rp}{(r+1)p-r}}} \nn\\
&\lesssim_{\sigma,s,d,\ga,r,p}  \|\rho_1^\tau\|_{\hat{W}^{-\sigma s, 1}} \|\rho_2^\tau\|_{\hat{W}^{1-\ga-\sigma s, 1}}\indic_{r=1} + \|\rho_1^\tau\|_{\hat{W}^{(\frac{d(r-1)}{r}-\sigma s)+, r}} \|\rho_2^\tau\|_{\hat{W}^{1-\ga-\sigma s, r}}\indic_{\substack{p=1 \\ r>1}} \nn\\
&\ph + \|\rho_1^\tau\|_{\hat{W}^{-\sigma s,r}} \|\rho_2^\tau\|_{\hat{W}^{(1-\ga-\sigma s + \frac{d(r-1)}{r})+, r}}\indic_{\substack{ p=r \\r>1}}\nn\\
&\ph +\|\rho_1^\tau\|_{\hat{W}^{(\frac{d(r-p)}{rp}-\sigma s)+,r}} \|\rho_2^\tau\|_{\hat{W}^{(1-\ga-\sigma s+\frac{d(p-1)}{p})+, r}}\indic_{\substack{1<p<r \\ r>1}} \nn\\
&= \|e^{\phi^\tau A^{1/2}}\mu_1^\tau\|_{\hat{W}^{0,1}}\|e^{\phi^\tau A^{1/2}}\mu_2^\tau\|_{\hat{W}^{1-\ga,1}}\indic_{r=1} + \|e^{\phi^\tau A^{1/2}}\mu_1^\tau\|_{\hat{W}^{\frac{d(r-1)}{r}+,r}}\|e^{\phi^\tau A^{1/2}}\mu_2^\tau\|_{\hat{W}^{1-\ga,r}}\indic_{\substack{p=1 \\ r>1}} \nn\\
&\ph +  \|e^{\phi^\tau A^{1/2}}\mu_1^\tau\|_{\hat{W}^{0,r}}\|e^{ \phi^\tau A^{1/2}}\mu_2^\tau\|_{\hat{W}^{(1-\ga + \frac{d(r-1)}{r})+,r}}\indic_{\substack{ p=r \\r>1}}\nn \\
&\ph + \|e^{\phi^\tau A^{1/2}}\mu_1^\tau\|_{\hat{W}^{(\frac{d(r-p)}{rp})+,r}} \| e^{\phi^\tau A^{1/2}}\mu_2^\tau\|_{\hat{W}^{(1-\ga+\frac{d(p-1)}{p})+, r}}\indic_{\substack{1<p<r \\ r>1}} . \label{eq:lwphf}
\end{align}
In order to obtain estimates that close, we need the top Sobolev index appearing in \eqref{eq:lwphf} to be $\leq \sigma s$. This leads us to the following conditions:
\begin{equation}
\begin{cases}
1-\ga \leq \sigma s, & {r=1} \\
\frac{d(r-1)}{r}<\sigma s \ \text{and} \  1-\ga \leq \sigma s, & {p=1 \ \text{and} \ r>1} \\
1-\ga+\frac{d(r-1)}{r}<\sigma s, & {p=r \ \text{and} \ r>1} \\
\frac{d(r-p)}{rp}<\sigma s \ \text{and} \ 1-\ga+\frac{d(p-1)}{p} < \sigma s, & {1<p<r \ \text{and} \ r>1}.
\end{cases}
\end{equation}
Putting together the estimates \eqref{eq:etalwphf} and \eqref{eq:lwphf}, we have shown that
\begin{multline}\label{eq:lwphff}
\Bigg(\int_{|\xi|>1} \left|\int_{\R^d} \jp{\xi-\eta}^{-\sigma s}|\eta|^{1-\ga}\jp{\eta}^{-\sigma s}\left|\hat{\rho}_1^\tau(\xi-\eta)\right|\left|\hat{\rho}_2^\tau(\eta)\right| d\eta \right|^r d\xi\Bigg)^{1/r}\\
\lesssim_{d,s,\sigma,\ga,r,q} \|e^{\phi^\tau A^{1/2}} \mu_1^\tau\|_{\hat{W}^{0,r}}\|e^{\phi^\tau A^{1/2}}\mu_2^\tau\|_{\hat{W}^{0,\frac{2q}{q-1}}}\indic_{\ga>1}  \\
+\|e^{\phi^\tau A^{1/2}} \mu_1^\tau\|_{\hat{W}^{\sigma s,r}} \|e^{\phi^\tau A^{1/2}}\mu_2^\tau\|_{\hat{W}^{\sigma s,r}}.
\end{multline}

Combining the estimates \eqref{eq:etalwplf} and \eqref{eq:lwphff}, we have shown that \eqref{eq:LWPnte} is $\lesssim_{d,\ga,s,\sigma,r,q} $
\begin{multline}
|\M|\int_0^t \paren*{1+ C_\d (t-\tau)^{-\frac{(\sigma s+1)}{2s}}} \Bigg(\Big(\|e^{\phi^\tau A^{1/2}} \mu_1^\tau\|_{\hat{W}^{0,r}} \|e^{\phi^\tau A^{1/2}}\mu_2^\tau\|_{\hat{W}^{0,\frac{2q}{q-1}}}  \\
+ \|e^{\phi^\tau A^{1/2}} \mu_1^\tau\|_{\hat{W}^{0,\frac{2q}{q-1}}}\|e^{\phi^\tau A^{1/2}} \mu_1^\tau\|_{\hat{W}^{0,\frac{2q}{q-1}}} \Big)\indic_{\ga>1} +\|e^{\phi^\tau A^{1/2}} \mu_1^\tau\|_{\hat{W}^{\sigma s,r}} \|e^{\phi^\tau A^{1/2}}\mu_2^\tau\|_{\hat{W}^{\sigma s,r}}\Bigg)d\tau \\
\lesssim_{\sigma,s} |\M|\paren*{t+C_\d t^{1-\frac{(\sigma s+1)}{2s}}}\Bigg(\Big(\|\mu_1\|_{C_t^0\G_{\phi}^{0,r}}\|\mu_2\|_{C_t^0\G_{\phi}^{0,\frac{2q}{q-1}}} + \|\mu_1\|_{C_t^0\G_{\phi}^{0,\frac{2q}{q-1}}} \|\mu_2\|_{C_t^0\G_{\phi}^{0,\frac{2q}{q-1}}}\Big)\indic_{\ga>1}  \\
+ \|\mu_1\|_{C_t^0\G_{\phi}^{\sigma,r}} \|\mu_2\|_{C_t^0\G_{\phi}^{\sigma,r}}\Bigg),
\end{multline}
assuming that $\frac{(\sigma s+1)}{2s} <1$.
\begin{comment}
Above, we have used the interpolation inequality
\begin{equation}
\|e^{\phi^\tau A^{1/2}}\mu_2^\tau\|_{\hat{W}^{0,r'}} \leq \|e^{\phi^\tau A^{1/2}} \mu_2^\tau\|_{\hat{W}^{0,r}}^{\frac{r}{r'}} \|e^{\phi^\tau A^{1/2}} \mu_2^\tau\|_{\hat{W}^{0,\infty}}^{1-\frac{r}{r'}} =  \|e^{\phi^\tau A^{1/2}} \mu_2^\tau\|_{\hat{W}^{0,r}}^{r-1} \|e^{\phi^\tau A^{1/2}} \mu_2^\tau\|_{\hat{W}^{0,\infty}}^{2-r}, 
\end{equation}
which is valid since $r\leq 2$, and therefore $r'\geq r$, by assumption.
\end{comment}

In order to complete the proof of the lemma, it is important to list all the conditions we imposed on the parameters $d,\ga,\sigma,s,r$ during the course of the above analysis:
\begin{enumerate}[(LWP1)]
\item\label{sCon}
$0<s\leq 1$;
\item\label{pCon}
	\begin{enumerate}
	\item\label{pCona}
	$r=1$ and $1-\ga\leq \sigma s$,
	\item\label{pConb}
	or $r>1$ and $\frac{d(r-1)}{r}<\sigma s$ and $1-\ga\leq \sigma s$,
	\item\label{pConc}
	or $r>1$ and $1-\ga + \frac{d(r-1)}{r}<\sigma s$,
	\item\label{pCond}
	or $r>1$ and $\exists p\in (1,r)$ such that $\frac{d(r-p)}{rp}<\sigma s$ and $1-\ga +\frac{d(p-1)}{p}<\sigma s$;
	\end{enumerate}
\item\label{ssCon}
$\frac{(\sigma s+1)}{2s}<1$
\end{enumerate}
Condition \ref{ssCon} means $\sigma < \frac{2s-1}{s}$ and since we require $\sigma>0$, we need $s>\frac{1}{2}$. Given any value of $0<\ga< d+1$, \ref{pCona} can be satisfied by choosing $r=1$ and $\sigma s\geq \min\{0,1-\ga\}$. More generally, we can satisfy all three conditions by arguing as follows. Given $0<\ga < d+1$ and $\frac{1}{2}<s\leq 1$, condition \ref{ssCon} implies that for any choice $r\geq 1$,
\begin{equation}
\sigma s < 2s-1.
\end{equation}
According to \ref{pConb}, it is possible to find such a $\sigma\geq \frac{1-\ga}{s}$ and $r>1$ if and only if
\begin{equation}
\frac{d(r-1)}{r} < 2s-1 \ \text{and} \ 1-\ga < 2s-1 \quad \Longleftrightarrow \quad r< \frac{d}{d+1-2s} \ \text{and} \ \frac{2-\ga}{2} < s .
\end{equation}
According to \ref{pConc}, it is possible to find such a $\sigma>0$ and $r>1$ if and only if
\begin{equation}
1-\ga + \frac{d(r-1)}{r} < 2s-1 \quad \Longleftrightarrow \quad \begin{cases} r<\frac{d}{d+2-\ga-2s}, & {\ga+2s\leq d+2} \\ r\leq \infty, &{\ga+2s> d+2}.\end{cases}
\end{equation}
According to \ref{pCond}, it is possible to find such a $\sigma >0$ and $r>1$ if and only if for such choice of $r$, there exists $p\in (1,r)$ such that
\begin{equation}
\frac{d(r-p)}{rp} < 2s-1 \quad \text{and} \quad 1- \ga + \frac{d(p-1)}{p} < 2s-1.
\end{equation} 
Since the preceding constraint is equivalent to
\begin{equation}
\frac{d}{p} < \frac{d}{r}+2s-1 \quad \text{and} \quad d+2-\ga-2s < \frac{d}{p},
\end{equation}
such a $p$ exists if and only if
\begin{equation}
d+2-\ga-2s < \frac{d}{r}+2s-1 \quad \Longleftrightarrow \quad \begin{cases} r<\frac{d}{d+3-\ga-4s}, & \ga+4s\leq d+3 \\ r\leq \infty , & \ga+4s > d+3. \end{cases}
\end{equation}
With this case analysis, the proof of \cref{lem:cmnl} is now complete.
\end{proof}

\begin{lemma}\label{lem:cmnlq}
Let $d\geq 1$, $1<\ga<d+1$. Suppose that $\ga,r,s,\sigma$ satisfy the constraints of \cref{lem:cmnl} and that
\begin{equation}
d+1-\ga<\sigma s + \frac{d}{r}.
\end{equation}
Then for any $1\leq q <\frac{d}{\ga-1}$, there exists a constant $C>0$ depending on $d,\ga,r,q,s,\sigma,\beta,\nu$, such that for any $T>0$,
\begin{multline}
\left\|\int_0^t e^{-\frac{\nu^2(t-\tau)}{2}A}B^\tau(\mu_1^\tau,\mu_2^\tau)d\tau\right\|_{C_T^0\G_{\phi}^{0,\frac{2q}{q-1}}} \leq C|\M|\paren*{T+T^{1-\frac{\sigma s+1}{2s}}}\Bigg(\|\mu_1\|_{C_{T}^0\G_{\phi}^{0,\frac{2q}{q-1}}}\|\mu_2\|_{C_{T}^0\G_{\phi}^{0,\frac{2q}{q-1}}} \\
+ \|\mu_1\|_{C_{T}^0\G_{\phi}^{\sigma , r}} \|\mu_2\|_{C_{T}^0\G_{\phi}^{0,\frac{2q}{q-1}}}\Bigg).
\end{multline}
\end{lemma}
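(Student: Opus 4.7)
The plan is to mirror the proof of Lemma~\ref{lem:cmnl} with two essential modifications: the target space is now $C_T^0\G_{\phi}^{0,p}$ with $p=\frac{2q}{q-1}$, and the second factor $\mu_2$ carries no Sobolev weight on the Fourier side because $\mu_2\in\G_{\phi}^{0,p}$ rather than $\G_{\phi}^{\sigma,r}$. After the change of unknowns $\mu_1 = e^{-\phi A^{1/2}}\jp{\nabla}^{-\sigma s}\rho_1$ (for the $\G_{\phi}^{\sigma,r}$ contribution) or $\mu_1 = e^{-\phi A^{1/2}}\rho_1$ (for the $\G_{\phi}^{0,p}$ contribution), together with $\mu_2=e^{-\phi A^{1/2}}\rho_2$, Minkowski's inequality, and the pathwise bounds $\phi^t-\phi^\tau=\beta(t-\tau)$ and $\phi^\tau-\nu W^\tau\geq 0$ (valid on $\Omega_{\al,\be,\nu}$), the problem reduces to estimating
\begin{equation*}
\int_0^t\left(\int_{\R^d}e^{-p\delta(t-\tau)(1+|\xi|^s)^2}\left|\int_{\R^d}\frac{|\xi\cdot\M\eta|\,|\hat{\g}(\eta)|}{\jp{\xi-\eta}^{\sigma s}}|\hat\rho_1^\tau(\xi-\eta)|\,|\hat\rho_2^\tau(\eta)|\,d\eta\right|^{p}d\xi\right)^{1/p}d\tau,
\end{equation*}
with $\delta>0$ chosen exactly as in the proof of Lemma~\ref{lem:cmnl}.

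I would then split $\xi$ into $|\xi|\leq 1$ (treated trivially, producing the $T$ factor) and $|\xi|>1$ (where $e^{-c(t-\tau)|\xi|^{2s}}|\xi|^p\lesssim (t-\tau)^{-p/(2s)}$ yields the $T^{1-(\sigma s+1)/(2s)}$ factor, which a fortiori dominates the sharper $T^{1-1/(2s)}$ actually available here). Within each regime further split $\eta$ into $|\eta|\leq 1$ and $|\eta|>1$ using $|\eta\hat{\g}(\eta)|\lesssim_\gamma |\eta|^{1-\gamma}$. The low-$\eta$ contribution is handled exactly as in Lemma~\ref{lem:cmnl}: H\"older's inequality with $|\cdot|^{1-\gamma}\indic_{B(0,1)}\in L^q$ (valid by $q<\tfrac{d}{\gamma-1}$), followed by the bounded-domain inclusion $\|\hat\rho_2\indic_{B(0,1)}\|_{L^{q/(q-1)}}\lesssim \|\hat\rho_2\|_{L^p}$ for the $\rho_2$ factor, produces the cross term $\|\mu_1\|_{\G_{\phi}^{0,p}}\|\mu_2\|_{\G_{\phi}^{0,p}}$, and an additional application of \cref{lem:Sob} to control $\hat\rho_1$ from its $\hat W^{\sigma s,r}$ norm produces the cross term $\|\mu_1\|_{\G_{\phi}^{\sigma,r}}\|\mu_2\|_{\G_{\phi}^{0,p}}$.

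The crux is the high-$\xi$, high-$\eta$ regime, where I apply Young's inequality $\|F\ast G\|_{L^p}\leq \|F\|_{L^a}\|G\|_{L^b}$ with $\tfrac{1}{a}+\tfrac{1}{b}=1+\tfrac{1}{p}$. For the $\rho_2$ factor I place $\jp{\cdot}^{1-\gamma}\hat\rho_2$ in $L^b$ and invoke \cref{lem:Sob} to get $\|\jp{\cdot}^{1-\gamma}\hat\rho_2\|_{L^b}=\|\rho_2\|_{\hat W^{1-\gamma,b}}\lesssim \|\rho_2\|_{\hat L^p}$, which requires $\tfrac{d}{b}<\gamma-1+\tfrac{d}{p}$. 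For the $\rho_1$ factor I discard the non-positive weight $\jp{\cdot}^{-\sigma s}$, place $\hat\rho_1$ in $L^a$, and invoke \cref{lem:Sob} again to get $\|\hat\rho_1\|_{L^a}\lesssim \|\rho_1\|_{\hat W^{\sigma s,r}}$, which requires $\tfrac{d}{a}<\tfrac{d}{r}+\sigma s$. Adding these two constraints and using $\tfrac{d}{a}+\tfrac{d}{b}=d+\tfrac{d}{p}$ produces $d+1-\gamma<\sigma s+\tfrac{d}{r}$, precisely the stated hypothesis. The main technical obstacle is the book-keeping needed to exhibit admissible Young exponents $(a,b)$ with $1\leq a\leq r$ and $1\leq b\leq p$ so that both applications of \cref{lem:Sob} are legitimate; this parallels the case analysis performed in the proof of \cref{lem:cmnl} and is automatically ensured by the standing assumption that $\gamma,r,s,\sigma$ satisfy those constraints. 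Finally, assembling the pointwise-in-$\tau$ bounds, pulling the suprema out of the time integral, and using the integrability of $(t-\tau)^{-1/(2s)}$ on $[0,T]$ (which holds since $s>\tfrac{1}{2}$) yields the claim.
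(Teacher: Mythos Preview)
Your approach is essentially the one the paper takes: repeat the machinery of \cref{lem:cmnl} in the target space $\G_\phi^{0,2q'}$ (with $2q'=\tfrac{2q}{q-1}$), handle the low-$\eta$ singularity exactly as before via $|\cdot|^{1-\gamma}\mathbf 1_{B(0,1)}\in L^q$, and for the high-$\eta$ piece use Young's inequality followed by two applications of \cref{lem:Sob}, which produces precisely the compatibility condition $d+1-\gamma<\sigma s+\tfrac{d}{r}$. The paper parametrizes the Young exponents by its auxiliary $p$ (your $a$), and your pair $(a,b)$ is literally $(p,\tfrac{2q'p}{(2q'+1)p-2q'})$; the analysis of when such a $p$ exists is the same computation you sketch.

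There is one slip worth fixing. In the high-$\eta$ step you write ``discard the non-positive weight $\jp{\cdot}^{-\sigma s}$, place $\hat\rho_1$ in $L^a$, and invoke \cref{lem:Sob} to get $\|\hat\rho_1\|_{L^a}\lesssim\|\rho_1\|_{\hat W^{\sigma s,r}}$.'' With your change of unknown $\mu_1=e^{-\phi A^{1/2}}\jp{\nabla}^{-\sigma s}\rho_1$, one has $\|\rho_1\|_{\hat W^{\sigma s,r}}=\|\mu_1\|_{\G_\phi^{2\sigma,r}}$, which is one $\sigma$ too strong. The correct move is \emph{not} to discard the weight: keep $\jp{\cdot}^{-\sigma s}\hat\rho_1=e^{\phi A^{1/2}}\hat\mu_1$ together, so Young gives $\|e^{\phi A^{1/2}}\mu_1\|_{\hat L^a}$, and then \cref{lem:Sob} yields $\|e^{\phi A^{1/2}}\mu_1\|_{\hat L^a}\lesssim\|e^{\phi A^{1/2}}\mu_1\|_{\hat W^{\sigma s,r}}=\|\mu_1\|_{\G_\phi^{\sigma,r}}$ under the very same constraint $\tfrac{d}{a}<\tfrac{d}{r}+\sigma s$. (Equivalently, as the paper does, make the change of unknown with Sobolev index $0$ from the start and only recover the $\hat W^{\sigma s,r}$ norm via \cref{lem:Sob} at the end.) With this correction your argument goes through and matches the paper's.
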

\begin{proof}
Set $q'\coloneqq\frac{q}{q-1}$. The proof follows the exact same lines of \cref{lem:cmnl} with $r$ replaced by $2q'$. Using the estimates \eqref{eq:etalwplf}, \eqref{eq:etalwphf}, \eqref{eq:lwphf}, we find that
\begin{multline}\label{eq:lwpqrhs}
\left\|\int_0^t e^{-\frac{\nu^2(t-\tau)}{2}A}B^\tau(\mu_1^\tau,\mu_2^\tau)d\tau\right\|_{\G_{\phi^t}^{0,2q'}}  \\
\lesssim_{d,s,q,\ga,\beta,\nu,p} |\M|\int_0^t \paren*{1+ (t-\tau)^{-\frac{\sigma s+1}{2s}}}\Bigg(\|e^{\phi^\tau A^{1/2}}\mu_1^\tau\|_{\hat{W}^{0,2q'}}\|e^{\phi^\tau A^{1/2}}\mu_2^\tau\|_{\hat{W}^{0,2q'}}\\
+\|e^{\phi^\tau A^{1/2}}\mu_1^\tau\|_{\hat{W}^{0,p}} \|e^{\phi^\tau A^{1/2}} \mu_2^\tau\|_{\hat{W}^{1-\ga,\frac{2q'p}{(2q'+1)p-2q'}}}\Bigg)d\tau
\end{multline}
for any choice $1\leq p\leq 2q'$. We want all the norms appearing in the right-hand side to be controlled by $\hat{W}^{0,2q'}$ and $\hat{W}^{\sigma s, r}$. Using \cref{lem:Sob}, we see that we need to choose $p\leq r$ so that
\begin{equation}
\frac{d}{p} - \frac{d}{r} < \sigma s \Longleftrightarrow {\sigma s + \frac{d}{r}} > \frac{d}{p}.
\end{equation}
For any choice of $p$, we have
\begin{equation}
\frac{2q'p}{(2q'+1)p-2q'} = 2q'\paren*{\frac{p}{p + 2q'(p-1)}} \leq 2q',
\end{equation}
since $p\geq 1$. In order for
\begin{equation}
\|e^{\phi^\tau A^{1/2}} \mu_2^\tau\|_{\hat{W}^{1-\ga,\frac{2q'p}{(2q'+1)p-2q'}}} \lesssim \|e^{\phi^\tau A^{1/2}} \mu_2^\tau\|_{\hat{W}^{0,2q'}},
\end{equation}
another use of \cref{lem:Sob} tells us that we need
\begin{equation}
1-\ga + d\paren*{\frac{(2q' + 1)p - 2q'}{2q' p} - \frac{1}{2q'}} = 1-\ga +\frac{d(p-1)}{p} < 0 \Longleftrightarrow d+1-\ga < \frac{d}{p}.
\end{equation}
Since we also needed $\sigma s+\frac{d}{r}>\frac{d}{p}$, the existence of a $p$ satisfying both the upper and lower bounds is true if and only if
\begin{equation}
d+1-\ga < \sigma s + \frac{d}{r} \Longleftrightarrow \begin{cases} r < \frac{d}{d+1-\ga-\sigma s}, & \ga+\sigma s \leq d+1 \\ r\leq \infty, & \ga+\sigma s>d+1.\end{cases}
\end{equation}
Note that if $\ga\geq 1$, then for any $\sigma s>0$, we have $\frac{d}{d+1-\ga-\sigma s} > 1$.

Under the constraints of the preceding paragraph, the right-hand side of \eqref{eq:lwpqrhs} is $\lesssim_{d,\ga,r,q,\sigma, s}$
\begin{multline}
|\M|\int_0^t \paren*{1+(t-\tau)^{-\frac{\sigma s+1}{2s}}}\Bigg(\|e^{\phi^\tau A^{1/2}}\mu_1^\tau\|_{\hat{W}^{0,2q'}}\|e^{\phi^\tau A^{1/2}}\mu_2^\tau\|_{\hat{W}^{0,2q'}} \\
+ \|e^{\phi^\tau A^{1/2}}\mu_1^\tau\|_{\hat{W}^{\sigma s, r}} \|e^{\phi^\tau A^{1/2}}\mu_2^\tau\|_{\hat{W}^{0, 2q'}} \Bigg)d\tau.
\end{multline}
Taking the supremum over $\tau\in [0,T]$ in the right-hand side and using fundamental theorem of calculus leads to the desired conclusion.
\end{proof}

An immediate corollary of \Cref{lem:cmnl,lem:cmnlq} is the following estimate for the Duhamel term.

\begin{cor}\label{cor:cmnl}
Under the assumptions of \Cref{lem:cmnl,lem:cmnlq}, there exists a constant $C>0$ depending on $d,\ga,r,q,\sigma, s, \beta,\nu$, such that for any $T>0$ and $\mu_1,\mu_2 \in C_T^0\X_{\phi,\ga}^{\sigma,0,r,\frac{2q}{q-1}}$,
\begin{multline}
\left\|\int_0^t e^{-\frac{\nu^2(t-\tau)}{2}A}B^\tau(\mu_1^\tau,\mu_2^\tau)d\tau\right\|_{C_T^0\X_{\phi,\ga}^{\sigma,0,r,\frac{2q}{q-1}}} \\
\leq C|\M|\paren*{T+T^{1-\frac{\sigma s+1}{2s}}}\Bigg(\|\mu_1\|_{C_T^0\X_{\phi,\ga}^{\sigma,0,r,\frac{2q}{q-1}}}\|\mu_2\|_{C_T^0\X_{\phi,\ga}^{\sigma,0,r,\frac{2q}{q-1}}}\Bigg)
\end{multline}
\end{cor}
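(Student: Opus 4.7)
The corollary is essentially a repackaging of Lemmas \ref{lem:cmnl} and \ref{lem:cmnlq} using the definition of the space $\X_{\phi,\ga}^{\sigma,0,r,\frac{2q}{q-1}}$ in \eqref{eq:Xdef}. The plan is to split into the two cases $\ga \leq 1$ and $\ga > 1$ dictated by the definition of $\X$, apply each of the two lemmas, and then absorb all the mixed products into a single bound of the form $\|\mu_1\|_{\X}\|\mu_2\|_{\X}$.

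In the case $\ga \leq 1$, one has $\X_{\phi,\ga}^{\sigma,0,r,\frac{2q}{q-1}} = \G_\phi^{\sigma,r}$ and the auxiliary terms in Lemma \ref{lem:cmnl} (those multiplied by $\indic_{\ga>1}$) vanish, so Lemma \ref{lem:cmnl} gives the claim verbatim. In the case $\ga > 1$, by definition the $\X$-norm equals $\|\cdot\|_{\G_\phi^{\sigma,r}} + \|\cdot\|_{\G_\phi^{0,\frac{2q}{q-1}}}$, and I apply Lemma \ref{lem:cmnl} to control the $\G_\phi^{\sigma,r}$-piece and Lemma \ref{lem:cmnlq} to control the $\G_\phi^{0,\frac{2q}{q-1}}$-piece. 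Summing, the right-hand side is a sum of bilinear products of norms drawn from $\{\G_\phi^{0,r},\,\G_\phi^{0,\frac{2q}{q-1}},\,\G_\phi^{\sigma,r}\}$.

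The key final step is to dominate each such product by $\|\mu_1\|_{\X}\|\mu_2\|_{\X}$. For the factors involving $\G_\phi^{\sigma,r}$ or $\G_\phi^{0,\frac{2q}{q-1}}$ this is immediate from the definition of the $\X$-norm. For the $\G_\phi^{0,r}$-factor appearing in Lemma \ref{lem:cmnl}, I invoke the trivial embedding $\|\cdot\|_{\G_\phi^{0,r}} \leq \|\cdot\|_{\G_\phi^{\sigma,r}}$ (a special case of \cref{lem:Gemb}, valid since $\sigma \geq 0$ and $\hat W^{0,r}\hookrightarrow \hat W^{\sigma s,r}$ for $\sigma s \geq 0$). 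After this reduction, each bilinear term is bounded by $\|\mu_1\|_{C_T^0\X_{\phi,\ga}^{\sigma,0,r,\frac{2q}{q-1}}}\|\mu_2\|_{C_T^0\X_{\phi,\ga}^{\sigma,0,r,\frac{2q}{q-1}}}$, and collecting constants yields the stated estimate with a (possibly new) constant $C$ depending only on $d,\ga,r,q,\sigma,s,\beta,\nu$.

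There is no real obstacle here beyond bookkeeping; the deep work has already been done in Lemmas \ref{lem:cmnl} and \ref{lem:cmnlq}. The only minor point to verify is the monotonicity of the $\hat{W}^{\kappa s,r}$ scale in the Sobolev index, which is used to collapse $\G_\phi^{0,r}$ into $\G_\phi^{\sigma,r}$, but this is a direct consequence of $\jp{\xi}^{0}\leq \jp{\xi}^{\sigma s}$ for $\sigma s \geq 0$ and so no additional hypothesis on $\sigma$, $s$, or $r$ beyond those already imposed in the two lemmas is required.
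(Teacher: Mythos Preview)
Your proposal is correct and matches the paper's treatment: the paper states the corollary as an ``immediate'' consequence of \Cref{lem:cmnl,lem:cmnlq} without writing out any details, and your case split plus the bound $\|\cdot\|_{\G_\phi^{0,r}} \leq \|\cdot\|_{\G_\phi^{\sigma,r}}$ is exactly the bookkeeping implicit in that word. One trivial slip: the embedding you cite should read $\hat W^{\sigma s,r}\hookrightarrow \hat W^{0,r}$ (higher index into lower), not the reverse, though the inequality you actually use is stated in the correct direction.
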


\begin{proof}[Proof of \cref{prop:lwp}]
Putting together the estimates of \cref{lem:cmlin} and \cref{cor:cmnl}, we have shown that there exists a constant $C>0$ depending on $d,\ga,r,q,\sigma,s,\beta,\nu$, such that
\begin{equation}\label{eq:Tmu}
\|\Tc(\mu)\|_{C_T^0\X_{\phi,\ga}^{\sigma,0,r,\frac{2q}{q-1}}} \leq \|\mu^0\|_{\X_{\al,\ga}^{\sigma,0,r,\frac{2q}{q-1}}} + C|\M|\paren*{T+T^{1-\frac{(\sigma s+1)}{2s}}}\|\mu\|_{C_T^0\X_{\phi,\ga}^{\sigma,0,r,\frac{2q}{q-1}}}^2
\end{equation}
and
\begin{multline}\label{eq:Tmu12}
\|\Tc(\mu_1)-\Tc(\mu_2)\|_{C_T^0\X_{\phi,\ga}^{\sigma,0,r,\frac{2q}{q-1}}} \leq C|\M|\paren*{T+T^{1-\frac{(\sigma s+1)}{2s}}}\|\mu_1-\mu_2\|_{C_T^0\X_{\phi,\ga}^{\sigma,0,r,\frac{2q}{q-1}}}\\
\times \paren*{\|\mu_1\|_{C_T^0\X_{\phi,\ga}^{\sigma,0,r,\frac{2q}{q-1}}}+\|\mu_2\|_{C_T^0\X_{\phi,\ga}^{\sigma,0,r,\frac{2q}{q-1}}}}.
\end{multline}
We now want to show that for any appropriate choice of $T$, the map $\Tc$ is a contraction on the closed ball $B_{R}(0)$ of radius $R>2\|\mu^0\|_{\X_{\al,\ga}^{\sigma,0,r,\frac{2q}{q-1}}}$ centered at the origin in the space $C_T^0\X_{\phi,\ga}^{\sigma,0,r,\frac{2q}{q-1}}$. Indeed, from the estimates \eqref{eq:Tmu} and \eqref{eq:Tmu12}, we see that if
\begin{equation}
C|\M|R\paren*{T+T^{1-\frac{\sigma s+1}{2s}}} \leq \frac{1}{2},
\end{equation}
then $\Tc$ is a contraction on $B_R(0)$. So by the contraction mapping theorem, there exists a unique fixed point $\mu=\Tc(\mu) \in C_T^0\X_{\phi,\ga}^{\sigma,0,r,\frac{2q}{q-1}}$. We note that $T\geq C'(|\M|R)^{-\frac{2s}{2s-\sigma s-1}}$, where $C'>0$ is a possibly different constant than $C$ but depending on the same parameters.

The preceding result shows local existence and uniqueness of solutions to the Cauchy problem \eqref{eq:cp}. To complete the proof of \cref{prop:lwp}, we now prove continuous dependence on the initial data. For $j=1,2$, let $\mu_j$ be a solution in $C_{T_j}^{0}\X_{\phi,\ga}^{\sigma,0,r,\frac{2q}{q-1}}$ to \eqref{eq:cp} with initial datum $\mu_j^0$, such that $\|\mu_j^0\|_{\X_{\al,\ga}^{\sigma,0,r,\frac{2q}{q-1}}}\leq R$. Then there exists a $T \gtrsim_{d,\ga,r,q,\sigma,s,\beta,\nu} (|\M|R)^{-\frac{2s}{2s-\sigma s-1}}$ such that $\mu_1,\mu_2$ are defined on $[0,T]$. From the mild formulation \eqref{eq:mild}, the triangle inequality, \cref{lem:cmlin}, and \cref{cor:cmnl}, we see that
\begin{multline}\label{eq:cdre}
\|\mu_1-\mu_2\|_{C_T^0\X_{\phi,\ga}^{\sigma,0,r,\frac{2q}{q-1}}} \leq \|\mu_1^0-\mu_2^0\|_{\X_{\al,\ga}^{\sigma,0,r,\frac{2q}{q-1}}} \\
+ C|\M|\paren*{T+T^{1-\frac{\sigma s+1}{2s}}}\|\mu_1-\mu_2\|_{C_T^0\X_{\phi,\ga}^{\sigma,0,r,\frac{2q}{q-1}}}\paren*{\|\mu_1\|_{C_T^0\X_{\phi,\ga}^{\sigma,0,r,\frac{2q}{q-1}}} + \|\mu_2\|_{C_T^0\X_{\phi,\ga}^{\sigma,0,r,\frac{2q}{q-1}}}}.
\end{multline}
Taking $T$ smaller if necessary while still preserving $T\gtrsim_{d,\ga,r,q,s,\sigma,\beta,\nu} (|\M|R)^{-\frac{2s}{2s-\sigma s-1}}$, we may assume that $2C|\M|(T+T^{1-\frac{\sigma s+1}{2}})R \leq\frac{1}{4}$. Bounding each $\|\mu_j\|_{C_T^0\X_{\phi,\ga}^{\sigma,0,r,\frac{2q}{q-1}}}$ by $R$ in the last factor, it then follows from \eqref{eq:cdre} that
\begin{equation}
\|\mu_1-\mu_2\|_{C_T^0\X_{\phi,\ga}^{\sigma,0,r,\frac{2q}{q-1}}}\leq 2\|\mu_1^0-\mu_2^0\|_{\X_{\al,\ga}^{\sigma,0,r,\frac{2q}{q-1}}}.
\end{equation}
With this last estimate, the proof of \cref{prop:lwp} is complete.
\end{proof}

\section{Global existence}\label{sec:glob}
We now show that with quantifiable high probability, there exists a global solution $\mu \in C_{\infty}^0\X_{\phi,\ga}^{\sigma_r,\sigma_q,r,\frac{2q}{q-1}}$ to the Cauchy problem \eqref{eq:cp}, provided $\sigma_r,\sigma_q,r,q$ are appropriately chosen. Moreover, the function
\begin{equation}
t \mapsto \|\mu^t\|_{\X_{\phi^t,\ga}^{\sigma_r,\sigma_q,r,q}}
\end{equation}
is strictly decreasing on $[0,\infty)$, provided that $\|\mu^0\|_{\X_{\al,\ga}^{\sigma_r,\sigma_q,r,q}}$ is sufficiently small. This then proves \cref{thm:main}. 

\subsection{Monotonicity of Gevrey norm}\label{ssec:globmon}
The goal of this subsection is to show the following. Suppose we have a solution $\mu \in C_T^0\X_{\phi,\ga}^{\ka_r,\ka_q, r,\frac{2q}{q-1}}$ to \eqref{eq:cp}, where $\phi^t\coloneqq \al+\beta t$, such that $\mu$ also belongs to $C_T^0\X_{\phi,\ga}^{\ka_r',\ka_q',r,\frac{2q}{q-1}}$, for sufficiently larger $\ka_r'>\ka_r$ and $\ka_q'>\ka_q$, and such that $\|\mu^0\|_{\X_{\al,\ga}^{\ka_r,\ka_q,r,\frac{2q}{q-1}}}$ is sufficiently small depending on $d,\ga, r,q,\ka_r,\ka_q,s,\beta,\nu,|\M|$. If $\ka_r,\ka_q$ are sufficiently large depending on $d,s,\ga$, then the quantity $\|\mu^t\|_{\X_{\phi^t,\ga}^{\ka_r,\ka_q, r,\frac{2q}{q-1}}}$ must be strictly decreasing on the interval $[0,T]$. In other words, the Gevrey norm of $\mu^t$ is strictly decreasing on an interval, provided that we know a Gevrey norm with higher Sobolev index (but the same Gevrey index) remains finite on the same interval.

\begin{prop}\label{prop:mon}
Let $d\geq 1$, $0<\ga<d+1$, $1\leq r\leq \infty$, $\frac{1}{2}<s\leq 1$. If $\ga>1$, then also suppose we are given $1<q<\frac{d}{\ga-1}$. Given $\al,\be>0$, set $\phi^t\coloneqq \al+\be t$. Assume that $W$ is a realization from $\Omega_{\al,\be,\nu}$.

If $\ga\leq 1$, then there is a threshold $\ka_{0,r}\in\R$ depending on $r,d,s,\ga$, such that for any $\ka_r>\ka_{0,r}$, the following holds. There is a constant $C_r>0$, depending only on $d,\ga,r,s,\ka_r$, such that if $\mu \in C_T^0\G_{\phi}^{\ka_r+\frac{2}{r},r}$ is a solution to \eqref{eq:cp}, for some $T>0$, satisfying
\begin{equation}\label{eq:monidr}
\|\mu^0\|_{\G_{\al}^{\ka_r,r}} < \frac{\nu^2-2\beta}{C_r|\M|},
\end{equation}
then
\begin{equation}\label{eq:monr}
\|\mu^t\|_{\G_{\phi^t}^{\ka_r,r}} < \|\mu^{t'}\|_{\G_{\phi^{t'}}^{\ka_r,r}} \qquad \forall 0\leq t' < t \leq T.
\end{equation}

If $\ga>1$, then there is a threshold $\ka_{0,q}\in\R$ depending on $q,d,s,\ga$, such that for any $\ka_q>\ka_{0,q}$, the following holds. There is a constant $C_q>0$, depending only on $d,\ga,q,s,\ka_q$, such that if $\mu\in C_T^0\G_{\phi}^{\ka_q+\frac{q-1}{q}, \frac{2q}{q-1}}$ is a solution to \eqref{eq:cp}, for some $T>0$, satisfying
\begin{equation}\label{eq:qmonidr}
\|\mu^0\|_{\G_{\al}^{\ka_q,\frac{2q}{q-1}}} < \frac{\nu^2-2\beta}{C_q|\M|},
\end{equation}
then
\begin{equation}
\|\mu^t\|_{\G_{\phi^t}^{\ka_q,\frac{2q}{q-1}}} < \|\mu^{t'}\|_{\G_{\phi^{t'}}^{\ka_q,\frac{2q}{q-1}}} \qquad \forall 0\leq t' < t \leq T.
\end{equation}
Furthermore, if $\mu \in C_T^0\X_{\phi,\ga}^{\ka_r+\frac{2}{r},\ka_q+\frac{q-1}{q},r,\frac{2q}{q-1}}$, where $\ka_r>\ka_{0,r}$, such that
\begin{equation}
\|\mu^0\|_{\X_{\al,\ga}^{\ka_r,\ka_q,r,\frac{2q}{q-1}}} < \frac{\nu^2-2\beta}{|\M|\max\{C_r,C_q\}},
\end{equation}
then \eqref{eq:monr} also holds.
\end{prop}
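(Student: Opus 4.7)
The plan is an energy estimate on the Fourier--Lebesgue Gevrey norm. Focusing first on the $\ga\leq 1$ case, I would set $\hat{f}^t(\xi):=\jp{\xi}^{\ka_r s}e^{\phi^t(1+|\xi|^s)}\hat{\mu}^t(\xi)$ and $E(t):=\int_{\R^d}|\hat{f}^t|^r d\xi = \|\mu^t\|_{\G_{\phi^t}^{\ka_r,r}}^r$. Differentiating using equation \eqref{eq:cp} yields
\begin{equation}
\frac{dE}{dt} = r\int |\hat{f}^t|^r\bigl[\be(1+|\xi|^s) - \tfrac{\nu^2}{2}(1+|\xi|^s)^2\bigr] d\xi \;-\; r\,\mathrm{Re}\int |\hat{f}^t|^{r-2}\overline{\hat{f}^t}\,\jp{\xi}^{\ka_r s}e^{\phi^t(1+|\xi|^s)}\widehat{B^t(\mu,\mu)}(\xi)\,d\xi.
\end{equation}
The linear contribution is bounded above by $-r\bigl(\tfrac{\nu^2}{2}-\be\bigr)\int|\hat{f}^t|^r(1+|\xi|^s)^2 d\xi$ since $\be<\tfrac{\nu^2}{2}$, and is finite precisely because $\mu\in C_T^0\G_\phi^{\ka_r+2/r,r}$: the weight $(1+|\xi|^s)^2\sim\jp{\xi}^{2s}$ distributed among $r$ factors of $\hat{f}^t$ costs exactly $2/r$ additional Sobolev derivatives.

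Next, I would bound the nonlinear piece, call it $N$. Setting $\hat{F}^t(\xi):=e^{\phi^t(1+|\xi|^s)}\hat{\mu}^t(\xi)$, the pathwise assumption $W\in\Omega_{\al,\be,\nu}$ together with the subadditivity $|\xi|^s\leq|\xi-\eta|^s+|\eta|^s$ (valid for $s\leq 1$) gives the same pointwise majorization as in the LWP analysis:
\begin{equation}
e^{\phi^t(1+|\xi|^s)}\bigl|\widehat{B^t(\mu,\mu)}(\xi)\bigr| \lesssim \int_{\R^d} |\xi\cdot\M\eta|\,|\hat{\g}(\eta)|\,|\hat{F}^t(\xi-\eta)||\hat{F}^t(\eta)|\,d\eta.
\end{equation}
Applying H\"older with exponents $\tfrac{r}{r-1}$ and $r$ to $N$ and then decomposing the resulting Fourier convolution via a paraproduct-style splitting---placing the weight $\jp{\xi}^{\ka_r s}$ on whichever factor carries the larger frequency, using $|\eta\hat{\g}(\eta)|\lesssim|\eta|^{1-\ga}$ and $|\xi|\leq|\xi-\eta|+|\eta|$, and invoking \cref{lem:Sob} to handle integrability mismatches---I expect to arrive at
\begin{equation}
|N|\leq C_r|\M|\,\|\mu^t\|_{\G_{\phi^t}^{\ka_r,r}}\int_{\R^d}|\hat{f}^t(\xi)|^r(1+|\xi|^s)^2 d\xi.
\end{equation}
The hard part is tracking the algebraic constraints among $d,s,\ga,r,\ka_r$ so that all Sobolev embeddings close and the right-hand side factors with this exact structure, with one factor of $\|\mu^t\|_{\G_{\phi^t}^{\ka_r,r}}$ pulled out and the remainder matching the dissipation integrand; this is the origin of the threshold $\ka_{0,r}$, in close analogy with conditions \ref{sCon}--\ref{ssCon} from the proof of \cref{lem:cmnl}.

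Combining the dissipation and nonlinear bounds gives
\begin{equation}
\frac{dE}{dt}\leq r\paren*{C_r|\M|\,\|\mu^t\|_{\G_{\phi^t}^{\ka_r,r}} - \paren*{\tfrac{\nu^2}{2}-\be}}\int|\hat{f}^t|^r(1+|\xi|^s)^2 d\xi.
\end{equation}
Under hypothesis \eqref{eq:monidr}, the bracketed prefactor is strictly negative at $t=0$, so $E$ is strictly decreasing there; a standard continuity/bootstrap argument---the set of $t\in[0,T]$ on which $\|\mu^{t'}\|_{\G_{\phi^{t'}}^{\ka_r,r}}<\|\mu^0\|_{\G_{\al}^{\ka_r,r}}$ for every $t'\in(0,t]$ is open, and its supremum cannot lie strictly inside $[0,T]$ since the smallness bound continues to force the prefactor to be negative there---extends strict decrease to all of $[0,T]$, and running the same argument from any $t'\in[0,T]$ in place of $0$ yields \eqref{eq:monr}. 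The case $\ga>1$ proceeds identically with $r$ replaced by $\tfrac{2q}{q-1}$, the assumption $q<\tfrac{d}{\ga-1}$ guaranteeing that the low-frequency singularity $|\eta|^{1-\ga}$ remains locally $L^q$. For the combined $\X$-space statement, I would use the additive energy $E(t):=\|\mu^t\|_{\G_{\phi^t}^{\ka_r,r}}^r+\|\mu^t\|_{\G_{\phi^t}^{\ka_q,\frac{2q}{q-1}}}^{\frac{2q}{q-1}}$, repeating the nonlinear estimate but now also invoking the auxiliary Fourier--Lebesgue norm at low frequencies in the spirit of \cref{lem:cmnlq}; the constant $\max\{C_r,C_q\}$ in the smallness hypothesis absorbs the worst of the two nonlinear contributions, after which the same bootstrap closes the argument.
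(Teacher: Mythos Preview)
Your approach is essentially the same as the paper's: differentiate the Gevrey--Fourier--Lebesgue energy, use subadditivity of $|\cdot|^s$ together with the pathwise bound $\phi^t-\nu W^t\geq 0$ to majorize the nonlinear term pointwise, perform a paraproduct decomposition to obtain a bilinear estimate whose constraints determine $\ka_{0,r}$, and close by a continuity/bootstrap argument. For the $\ga\leq 1$ case and the $\tfrac{2q}{q-1}$-norm case with $\ga>1$, your sketch matches the paper's proof.

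There is one genuine point of divergence worth flagging in the combined $\X$-space statement for $\ga>1$. Your proposal to use the additive energy $E(t)=\|\mu^t\|_{\G_{\phi^t}^{\ka_r,r}}^r+\|\mu^t\|_{\G_{\phi^t}^{\ka_q,\frac{2q}{q-1}}}^{\frac{2q}{q-1}}$ would show that the \emph{sum} decreases, but the conclusion \eqref{eq:monr} asserts that the $r$-norm itself decreases. Monotonicity of the sum, even combined with monotonicity of the $\tfrac{2q}{q-1}$-piece, does not imply monotonicity of the $r$-piece. The paper instead proceeds sequentially: first it establishes that $\|\mu^t\|_{\G_{\phi^t}^{\ka_q,\frac{2q}{q-1}}}$ is decreasing (this step is self-contained because the low-frequency term $\|\mu^t\|_{\hat{W}^{1-\ga,\frac{2q}{q-1}}}$ appearing in the nonlinear estimate is dominated by $\|\mu^t\|_{\G_{\phi^t}^{\ka_q,\frac{2q}{q-1}}}$); then, in the differential inequality for $\|\mu^t\|_{\G_{\phi^t}^{\ka_r,r}}^r$, the auxiliary factor $\|\mu^t\|_{\hat{W}^{1-\ga,\frac{2q}{q-1}}}$ is bounded by its initial value via the already-proved $\tfrac{2q}{q-1}$-monotonicity, so the prefactor stays negative and the $r$-norm bootstrap closes on its own. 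Your argument becomes correct if you replace the additive energy by this two-step structure.
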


\begin{remark}
Bounds for the thresholds $\ka_{0,r},\ka_{0,q}$ are explicitly worked out in the proof of \cref{prop:mon}. See the conditions \ref{U1},\ref{U2} and \ref{L1},\ref{L2} below together with their respectively ensuing analysis.
\end{remark}

The proof of \cref{prop:mon} consists of several lemmas. To begin, we observe from the chain rule and using equation \eqref{eq:cp} (there is an approximation step we omit),
\begin{multline}\label{eq:infmonrhs}
\frac{d}{dt} |e^{\phi^t(1+|\xi|^s)}\hat{\mu}^t(\xi)| = \Re\Bigg(|e^{\phi^t(1+|\xi|^s)}\hat{\mu}^t(\xi)|^{-1}\ol{e^{\phi^t (1+|\xi|^s)} \hat{\mu}^t(\xi)}\Bigg(\beta(1+|\xi|^s)e^{\phi^t (1+|\xi|^s)} \hat{\mu}^t(\xi)\\
-  e^{\phi^t(1+|\xi|^s)}\F(B^t(\mu^t,\mu^t))(\xi) - \frac{\nu^2}{2}(1+|\xi|^s)^2 e^{\phi^t(1+|\xi|^s)}\hat{\mu}^t \Bigg).
\end{multline}
Replacing the first and second terms by their magnitudes, we see that the right-hand side is $\leq$
\begin{equation}
 |e^{\phi^t(1+|\xi|^s)}\hat{\mu}^t(\xi)|\Bigg(\beta(1+|\xi|^s) - \frac{\nu^2}{2}(1+|\xi|^s)^2\Bigg) + \left|e^{\phi^t(1+|\xi|^s)}\F(B^t(\mu^t,\mu^t))(\xi)\right|.
\end{equation}
Using the elementary inequality (remember that $s\leq 1$)
\begin{equation}
\jp{\xi} \leq (1+|\xi|^s)^{\frac{1}{s}} \leq 2^{\frac{2-s}{2s}} \jp{\xi}
\end{equation}
together with our assumption that $\beta<\frac{\nu^2}{2}$, we arrive at the inequality
\begin{multline}\label{eq:infenid}
\frac{d}{dt}|e^{\phi^t(1+|\xi|^s)}\hat{\mu}^t(\xi)|  \leq -\jp{\xi}^{2s} |e^{\phi^t(1+|\xi|^s)}\hat{\mu}^t(\xi)|\paren*{\frac{\nu^2}{2}-\beta} + \left|e^{\phi^t(1+|\xi|^s)}\F(B^t(\mu^t,\mu^t))(\xi)\right|.
\end{multline}
It now follows from this identity, the chain rule, and differentiating inside the integral that for any $1\leq r<\infty$,
\begin{multline}\label{eq:enid}
\frac{1}{r}\frac{d}{dt} \| e^{\phi^t A^{1/2}}\mu^t\|_{\hat{W}^{\sigma s,r}}^r \leq -\paren*{\frac{\nu^2}{2} -\beta}\int_{\R^d}  \left|e^{\phi^t (1+|\xi|^s)}\jp{\xi}^{(\sigma+\frac{2}{r})s} \hat{\mu}^t(\xi) \right|^{r}d\xi\\
+ \int_{\R^d} \left|e^{\phi^t (1+|\xi|^s)}\jp{\xi}^{\sigma s} \hat{\mu}^t(\xi) \right|^{r-1}\jp{\xi}^{\sigma s} e^{\phi^t(1+|\xi|^s)}|\F(B^t(\mu^t,\mu^t))(\xi)|d\xi.
\end{multline}

We need to show that the second term in \eqref{eq:enid} is not so large that it cannot be absorbed by the first term, which is negative. This is a problem in Fourier analysis, which we address with the next two lemmas.

\begin{lemma}\label{lem:Bbndpre}
For any $t>0$ with $\phi^t - \nu W^t\geq 0$, it holds for any test functions $f,g$ that
\begin{multline}
|e^{\phi^t(1+|\xi|^s)}\F(B^t(f,g))(\xi)| \\
\lesssim_\ga |\M|\int_{\R^d}|\xi| |\eta|^{1-\ga} \left|e^{\phi^t(1+|\xi-\eta|^s)}\hat{f}(\xi-\eta) e^{\phi^t(1+|\eta|^s)}\hat{g}(\eta)\right| d\eta.
\end{multline}
\end{lemma}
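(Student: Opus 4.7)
\textbf{Proof proposal for Lemma \ref{lem:Bbndpre}.} The plan is to compute the Fourier transform of $B^t(f,g)$ explicitly, then rearrange the exponential weights so that the good factor $e^{\phi^t-\nu W^t\geq 0}$ multiplies a nonpositive quantity and can be discarded.

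First I would unpack $B^t(f,g) = \div\,\Gamma^t(\Gamma^{-t} f\cdot(\M\nabla\g\ast \Gamma^{-t} g))$. Recalling that $\widehat{\Gamma^{-t}h}(\zeta)= e^{\nu W^t(1+|\zeta|^s)}\hat h(\zeta)$ and that the Fourier transform turns convolution into product and product into convolution (up to a constant), a direct calculation gives
\begin{align*}
\F(B^t(f,g))(\xi)
&= i\xi\cdot e^{-\nu W^t(1+|\xi|^s)}\,\F\!\Big(\Gamma^{-t}f\cdot(\M\nabla\g\ast\Gamma^{-t}g)\Big)(\xi)\\
&= -(2\pi)^{-d}\,e^{-\nu W^t(1+|\xi|^s)}\int_{\R^d}(\xi\cdot\M\eta)\,\hat\g(\eta)\,
e^{\nu W^t(2+|\xi-\eta|^s+|\eta|^s)}\,\hat f(\xi-\eta)\hat g(\eta)\,d\eta.
\end{align*}

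Multiplying by $e^{\phi^t(1+|\xi|^s)}$ and inserting/removing the factors $e^{\phi^t(1+|\xi-\eta|^s)}$ and $e^{\phi^t(1+|\eta|^s)}$ to match the RHS of the claim, the total exponential weight simplifies (after collecting $\phi^t$ and $\nu W^t$ terms) to
\begin{equation*}
\exp\!\Big((\phi^t-\nu W^t)\bigl[|\xi|^s-|\xi-\eta|^s-|\eta|^s-1\bigr]\Big).
\end{equation*}
This is the crucial algebraic identity. Since $0<s\leq 1$, the function $|\cdot|^s$ is subadditive, so $|\xi|^s-|\xi-\eta|^s-|\eta|^s\leq 0$ and consequently the bracket is $\leq -1\leq 0$. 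Combined with the hypothesis $\phi^t-\nu W^t\geq 0$, the entire exponential is bounded by $1$. Taking absolute values inside the integral therefore yields
\begin{equation*}
\bigl|e^{\phi^t(1+|\xi|^s)}\F(B^t(f,g))(\xi)\bigr|
\leq (2\pi)^{-d}\int_{\R^d}|\xi\cdot\M\eta||\hat\g(\eta)|\,\bigl|e^{\phi^t(1+|\xi-\eta|^s)}\hat f(\xi-\eta)\bigr|\bigl|e^{\phi^t(1+|\eta|^s)}\hat g(\eta)\bigr|\,d\eta.
\end{equation*}

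Finally, I would use the crude bound $|\xi\cdot\M\eta|\leq |\xi||\M||\eta|$ together with the standing assumption $|\eta\hat\g(\eta)|\lesssim_\ga |\eta|^{1-\ga}$ to replace $|\eta||\hat\g(\eta)|$ by $|\eta|^{1-\ga}$ up to a constant depending only on $\ga$, yielding the claimed inequality. The only real ``obstacle'' is the exponent bookkeeping in the second step; once one verifies that the $\phi^t$ and $\nu W^t$ contributions combine precisely into the single factor $(\phi^t-\nu W^t)[|\xi|^s-|\xi-\eta|^s-|\eta|^s-1]$, the rest is immediate from subadditivity of $|\cdot|^s$ and the trivial estimates on the symbol of $\M\nabla\g$.
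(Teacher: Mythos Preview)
Your proof is correct and follows essentially the same route as the paper's own argument: compute the Fourier transform of $B^t(f,g)$, insert and remove the weights $e^{\phi^t(1+|\xi-\eta|^s)}$ and $e^{\phi^t(1+|\eta|^s)}$ so that the residual exponential factor is exactly $e^{(\phi^t-\nu W^t)(|\xi|^s-|\xi-\eta|^s-|\eta|^s-1)}$, then discard it using subadditivity of $|\cdot|^s$ and the hypothesis $\phi^t-\nu W^t\geq 0$, and finish with $|\eta\hat{\g}(\eta)|\lesssim_\ga |\eta|^{1-\ga}$. The only cosmetic differences are that you track the $(2\pi)^{-d}$ normalization and the sign from $i\xi\cdot i\M\eta$ explicitly, and you phrase the key pointwise inequality as subadditivity of $|\cdot|^s$ rather than the equivalent $\|\cdot\|_{\ell^1}\leq\|\cdot\|_{\ell^s}$ used in the paper.
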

\begin{proof}
We observe from the definition \eqref{eq:Bdef} of $B^t$ that for any test functions $f,g$,
\begin{multline}
e^{\phi^t(1+|\xi|^s)}\F(B^t(f,g))(\xi) \\
= \int_{\R^d}e^{\phi^t(1+|\xi|^s) - \nu W^t(|\xi|^s - |\eta|^s - |\xi-\eta|^s-1)} \paren*{\xi\cdot\M\eta}\hat{\g}(\eta) \hat{f}(\xi-\eta)\hat{g}(\eta) d\eta.
\end{multline}
Writing $1=e^{\phi^t(1+|\eta|^s)}e^{-\phi^t(1+|\eta|^s)} = e^{\phi^t(1+|\xi-\eta|^s)}e^{-\phi^t(1+|\xi-\eta|^s)}$, we see that the magnitude of the preceding right-hand side is controlled by
\begin{multline}
\left|\int_{\R^d} e^{(\phi^t-\nu W^t)(|\xi|^s-|\eta|^s-|\xi-\eta|^s-1)}\paren*{\xi\cdot\M\eta}\hat{\g}(\eta) e^{\phi^t(1+|\xi-\eta|^s)}\hat{f}(\xi-\eta) e^{\phi^t(1+|\eta|^s)}\hat{g}(\eta)d\eta \right| \\
\lesssim_\ga |\M|\int_{\R^d} e^{(\phi^t-\nu W^t)(|\xi|^s-|\eta|^s-|\xi-\eta|^s-1)} |\xi| |\eta|^{1-\ga}\left|e^{\phi^t(1+|\xi-\eta|^s)}\hat{f}(\xi-\eta) e^{\phi^t(1+|\eta|^s)}\hat{g}(\eta)\right|d\eta,
\end{multline}
where we have used our assumption that $|\eta\hat{\g}(\eta)|\lesssim_{\ga} |\eta|^{1-\ga}$. Since $\phi^t-\nu W^t\geq 0$ by assumption and $|\xi|^s-|\eta|^s-|\xi-\eta|^s-1\leq 0$ by $\|\cdot\|_{\ell^1} \leq \|\cdot\|_{\ell^s}$ (recall $s\leq 1$), the desired inequality now follows. 
\end{proof}

Next, we observe that by applying \cref{lem:Bbndpre} to the second term in the right-hand side of \eqref{eq:enid}, we need to estimate expressions of the form
\begin{equation}
\int_{\R^d} \left|e^{\phi^t (1+|\xi|^s)}\jp{\xi}^{\ka s} \hat{h}(\xi) \right|^{r-1}\jp{\xi}^{\ka s+1}\int_{\R^d}|\eta|^{1-\ga} \left|e^{\phi^t(1+|\xi-\eta|^s)}\hat{f}(\xi-\eta) e^{\phi^t(1+|\eta|^s)}\hat{g}(\eta)\right| d\eta d\xi,
\end{equation}
where $f,g,h$ are test functions. We take care of such expressions with the next lemma.

\begin{lemma}\label{lem:Bbnd}
Let $d\geq 1$, $0<\ga<d+1$, $1\leq r\leq \infty$, $\frac{1}{2}< s\leq 1$. If $\ga>1$, also assume that $1\leq q<\frac{d}{\ga-1}$. Then there exists a threshold $\ka_0$ depending on $d,\ga,r,s$, such that for any $\ka>\ka_0$, there exists a constant $C>0$ depending on $d,\ga,r,s,q,\ka$ so that
\begin{multline}\label{eq:Bbnd}
\int_{\R^d} \left|e^{\phi^t (1+|\xi|^s)}\jp{\xi}^{\ka s} \hat{h}(\xi) \right|^{r-1}\jp{\xi}^{\ka s+1}\int_{\R^d}|\eta|^{1-\ga} \left|e^{\phi^t(1+|\xi-\eta|^s)}\hat{f}(\xi-\eta) e^{\phi^t(1+|\eta|^s)}\hat{g}(\eta)\right| d\eta d\xi \\
\leq C\|e^{\phi^t A^{1/2}} h\|_{\hat{W}^{(\ka+\frac{2}{r})s,r}}^{r-1}\Bigg(\|e^{\phi^t A^{1/2}}f\|_{\hat{W}^{(\ka-\frac{2(r-1)}{r})s+1, r}} \|e^{\phi^t A^{1/2}} g\|_{\hat{W}^{1-\ga,\frac{2q}{q-1}}}\indic_{\ga>1} \\
+ \|e^{\phi^t A^{1/2}}f\|_{\hat{W}^{(\ka + \frac{2}{r})s,r}} \|e^{\phi^t A^{1/2}}g\|_{\hat{W}^{\ka s,r}} + \|e^{\phi^t A^{1/2}}f\|_{\hat{W}^{\ka s,r}}\|e^{\phi^t A^{1/2}}g\|_{\hat{W}^{(\ka + \frac{2}{r})s,r}}\Bigg).
\end{multline}
\end{lemma}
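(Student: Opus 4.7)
I would first apply Hölder's inequality in $\xi$ to peel off the $\|H\|_{\hat{W}^{(\ka+\frac{2}{r})s,r}}^{r-1}$ factor, where $H\coloneqq e^{\phi^t A^{1/2}}h$, reducing matters to estimating a weighted $L^r$-norm of a convolution-type integral. Concretely, setting $\tau\coloneqq(\ka-\tfrac{2(r-1)}{r})s+1$ and
\begin{equation*}
\mathcal I(\xi)\coloneqq\int_{\R^d}|\eta|^{1-\ga}\left|e^{\phi^t(1+|\xi-\eta|^s)}\hat f(\xi-\eta)\right|\left|e^{\phi^t(1+|\eta|^s)}\hat g(\eta)\right|d\eta,
\end{equation*}
I would rewrite the left-hand side as $\int(|H(\xi)|\jp{\xi}^{(\ka+\frac{2}{r})s})^{r-1}\jp{\xi}^{\tau}\mathcal I(\xi)\,d\xi$ and apply Hölder with exponents $(\tfrac{r}{r-1},r)$ to get
\begin{equation*}
\text{LHS}\leq\|H\|_{\hat{W}^{(\ka+\frac{2}{r})s,r}}^{r-1}\cdot\|\jp{\cdot}^{\tau}\mathcal I\|_{L^r},
\end{equation*}
with the $r=1$ case (no $H$-factor, $\tau=\ka s+1$) being formally identical. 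It then remains to bound the weighted $L^r$-norm of $\mathcal I$ by the sum in the bracketed factor.

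To do so, I would decompose the $\eta$-integral into low frequencies $|\eta|\leq 1$ and high frequencies $|\eta|>1$, and within the latter further split into high-low ($|\xi-\eta|>|\eta|$) and low-high ($|\xi-\eta|\leq|\eta|$) paraproduct regions. For the low-frequency piece, only dangerous when $\ga>1$, I would apply a three-way Hölder in $\eta$ with exponents $(q,2q',2q')$ where $q'=\tfrac{q}{q-1}$, writing the singular factor as $|\eta|^{1-\ga}\jp{\eta}^{\ga-1}\cdot\jp{\eta}^{1-\ga}$: the first piece is $L^q(B_1)$ thanks to $q<\tfrac{d}{\ga-1}$, the second factor pairs with $F(\xi-\cdot)$ in $L^{2q'}$, and the third pairs with $G$ in $L^{2q'}$. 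Minkowski's integral inequality in $\xi$ (valid for $r\leq 2q'$), combined with $\jp{\xi}\sim\jp{\xi-\eta}$ for $|\eta|\leq 1$, then produces $\|e^{\phi^t A^{1/2}}f\|_{\hat{W}^{\tau,r}}\|e^{\phi^t A^{1/2}}g\|_{\hat{W}^{1-\ga,2q'}}$, which matches the first target term since $\tau=(\ka-\tfrac{2(r-1)}{r})s+1$.

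For the high-frequency piece $|\eta|>1$ I would use $|\eta|^{1-\ga}\lesssim\jp{\eta}^{1-\ga}$. In the high-low region, $\jp{\xi}\lesssim\jp{\xi-\eta}$ lets me place $\jp{\xi}^{\tau}$ on $F$; Young's $L^r * L^1\hookrightarrow L^r$ then yields a product of $\|F\|_{\hat{W}^{\tau,r}}$ and $\|\jp{\cdot}^{1-\ga}G\|_{L^1}$, and \cref{lem:Sob} promotes the latter $L^1$-norm to $\|G\|_{\hat{W}^{\ka s,r}}$ provided $\ka s>\tfrac{d(r-1)}{r}+\max\{0,1-\ga\}$. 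Combined with $\tau\leq(\ka+\tfrac{2}{r})s$ (equivalent to $s\geq\tfrac{1}{2}$), this reproduces the second target term. The symmetric low-high region is handled by swapping roles: use $\jp{\xi}\lesssim\jp{\eta}$ to pool $\jp{\xi}^{\tau}|\eta|^{1-\ga}\lesssim\jp{\eta}^{\tau+1-\ga}$, then Young's $L^1 * L^r\hookrightarrow L^r$ followed by \cref{lem:Sob} yield the third term, provided $\tau+1-\ga\leq(\ka+\tfrac{2}{r})s$ and $\ka s>\tfrac{d(r-1)}{r}$.

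The main obstacle will be compiling the several lower bounds on $\ka s$ coming from the Sobolev embeddings of \cref{lem:Sob} into the clean threshold $\ka_0$ promised in the statement, while simultaneously maintaining $\tau\geq 0$ (needed so that $\jp{\xi}^{\tau}\leq\jp{\eta}^{\tau}$ in the low-high region) and verifying that $s>\tfrac{1}{2}$ suffices for all the interpolation inequalities. A secondary but subtle point is that the low-high estimate closes directly only when $\ga\geq 2-2s$; for the remaining sliver $\ga<2-2s$ (possible only for $s$ close to $\tfrac{1}{2}$), one may need to refine the paraproduct or interpolate between Young's inequalities with different exponents to recover the extra derivatives lost to the growth factor $\jp{\eta}^{1-\ga}$.
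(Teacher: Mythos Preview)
Your proposal follows the same skeleton as the paper's proof: peel off $h$ via H\"older with exponents $(\tfrac{r}{r-1},r)$, split the $\eta$-integral into $|\eta|\leq 1$ and $|\eta|>1$, then paraproduct-decompose the latter and apply Young plus \cref{lem:Sob}. The differences lie in the paraproduct bookkeeping and, more substantively, in the Young exponents.

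For $|\eta|>1$ the paper uses a three-region Bony split ($|\xi-\eta|\leq|\eta|/8$, $|\eta|\leq|\xi-\eta|/8$, and comparable) rather than your two-region split; more importantly, in each region it applies Young's inequality with a \emph{free} exponent $p\in[1,r]$ and then optimizes over $p$ after invoking \cref{lem:Sob}. This is exactly the refinement you anticipate in your final paragraph: with the flexible $p$, the troublesome constraint $\tau+1-\gamma\leq(\kappa+\tfrac{2}{r})s$ (equivalently $s\geq 1-\gamma/2$) is traded for a lower bound on $\kappa$, so for $r>1$ the estimate closes for the full range $s>\tfrac12$. For $r=1$ the paper does retain the restriction $s\geq 1-\gamma/2$ (their condition (U1)(a)), which is consistent with the hypothesis ``$s$ sufficiently large depending on $\gamma$'' in \cref{thm:main}. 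So your diagnosis of the obstruction and your suggested fix are both on target; the paper simply carries out that fix rather than leaving it as a remark.

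For $|\eta|\leq 1$ (only relevant when $\gamma>1$) the paper does not use your three-way H\"older plus Minkowski, but rather Young's $L^r*L^1\to L^r$ directly---after first splitting $|\xi|\leq 2|\eta|$ versus $|\xi|>2|\eta|$ so that $\jp{\xi}^{\tau}$ can be transferred onto $F$---followed by H\"older on the $L^1$ factor. This avoids your implicit restriction $r\leq 2q'$, though in the applications that restriction is harmless.
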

\begin{proof}
Writing $\jp{\xi}^{\ka s+1} = \jp{\xi}^{\ka s+1-\frac{2s(r-1)}{r}}\jp{\xi}^{\frac{2s(r-1)}{r}}$ and applying H\"older's inequality with conjugate exponents $\frac{r}{r-1}$ and $r$, we find that the left-hand side of \eqref{eq:Bbnd} is $\leq$
\begin{multline}\label{eq:Bbndstart}
\|e^{\phi^t A^{1/2}}\jp{\nabla}^{(\ka + \frac{2}{r})s} h\|_{\hat{L}^r}^{r-1}\Bigg(\int_{\R^d} \jp{\xi}^{(r\ka - 2(r-1))s+r} \left|\int_{\R^d}|\eta|^{1-\ga} \right.\\
\left.\left|e^{\phi^t(1+|\xi-\eta|^s)}\hat{f}(\xi-\eta) e^{\phi^t(1+|\eta|^s)}\hat{g}(\eta)\right| d\eta \right|^r d\xi\Bigg)^{1/r},
\end{multline}
with obvious modification if $r=\infty$. We need to estimate the second factor. We first address the singularity in $\eta$ at low frequency if $1-\ga<0$. Observe that by separately considering the regions $|\xi| \leq 2|\eta|$ and $|\xi|>2|\eta|$, it follows from Young's inequality that
\begin{align}
&\Bigg(\int_{\R^d} \jp{\xi}^{(r\ka - 2(r-1))s+r} \left|\int_{|\eta|\leq 1}|\eta|^{1-\ga}\left|\paren*{e^{\phi^t(1+|\xi-\eta|^s)}\hat{f}(\xi-\eta)} \paren*{e^{\phi^t(1+|\eta|^s)}\hat{g}(\eta)}\right| d\eta \right|^r d\xi\Bigg)^{1/r} \nn\\
&\lesssim_{r,\ka,s,\ga,d} \|\jp{\cdot}^{(\ka -\frac{2(r-1)}{r})s+1} e^{\phi^t(1+|\cdot|^s)}\hat{f}\|_{L^r} \||\cdot|^{1-\ga}e^{\phi^t(1+|\cdot|^s)} \hat{g}1_{B(0,1)}\|_{L^1} \nn\\
&\lesssim \|e^{\phi^t A^{1/2}}f\|_{\hat{W}^{(\ka-\frac{2(r-1)}{r})s+1, r}} \|e^{\phi^t A^{1/2}} g\|_{\hat{W}^{1-\ga,\frac{2q}{q-1} }}, \label{eq:monloga}
\end{align}
for any $1\leq q<\frac{d}{\ga-1}$, where we use H\"older's inequality on the second factor to obtain the ultimate line.

Next, we make an elementary Bony decomposition by splitting the space of $(\xi,\eta)$ into the regions $|\xi-\eta| \leq \frac{|\eta|}{8}$, $|\eta|\leq \frac{|\xi-\eta|}{8}$, and $\frac{1}{8}< \frac{|\xi-\eta|}{|\eta|}<8$.

\medskip
\textbullet \ If $|\xi-\eta| \leq \frac{|\eta|}{8}$, then $|\eta| \sim |\xi|$. So by Young's inequality followed by application of \cref{lem:Sob}, it holds for any $1\leq p\leq r$ that
\begin{align}
&\paren*{\int_{\R^d}\jp{\xi}^{r( (\ka-\frac{2(r-1)}{r}) s+1)}\paren*{\int_{\substack{|\xi-\eta|\leq \frac{|\eta|}{8} \\ |\eta|>1}}e^{\phi^t(1+|\xi-\eta|^s)} |\hat{f}(\xi-\eta)| |\eta|^{1-\gamma}e^{\phi^t(1+|\eta|^s)}|\hat{g}(\eta)| d\eta}^r d\xi }^{1/r} \nn\\
&\lesssim_{r,\ka,s,\ga} \|e^{\phi^t A^{1/2}} f\|_{\hat{L}^{\frac{pr}{p(r+1)-r}}} \|\jp{\nabla}^{(\ka-\frac{2(r-1)}{r})s+2-\ga}e^{\phi^t A^{1/2}}g\|_{\hat{L}^p} \nn\\
&\lesssim_{d,r,s,\ka,\ga,p} \|e^{\phi^t A^{1/2}} f\|_{\hat{W}^{0,1}} \|e^{\phi^t A^{1/2}} g\|_{\hat{W}^{\ka s+2-\ga,1}} \indic_{r=1} \nn\\
&\ph+ \|e^{\phi^t A^{1/2}} f\|_{\hat{W}^{0,r}} \|e^{\phi^t A^{1/2}} g\|_{\hat{W}^{((\ka-\frac{2(r-1)}{r})s+2+\frac{d(r-1)}{r}-\ga)+, r}}\indic_{\substack{r>1 \\ p=1}} \nn\\
&\ph+ \|e^{\phi^t A^{1/2}} f\|_{\hat{W}^{\frac{d(r-1)}{r}+, r}} \|e^{\phi^t A^{1/2}} g\|_{\hat{W}^{(\ka- \frac{2(r-1)}{r})s+2-\ga, r}}\indic_{\substack{r>1 \\ p=r}} \nn\\
&\ph + \|e^{\phi^t A^{1/2}} f\|_{\hat{W}^{\frac{d(p-1)}{p}+,r}}\|e^{\phi^t A^{1/2}} g\|_{\hat{W}^{((\ka-\frac{2(r-1)}{r})s+2-\ga+\frac{d(r-p)}{rp})+, r}} \indic_{\substack{r>1 \\ 1<p<r}}. \label{eq:monlh}
\end{align}

\medskip
\textbullet \ If $|\eta|\leq \frac{|\xi-\eta|}{8}$, then $|\xi-\eta|\sim |\xi|$. Again using Young's inequality and \cref{lem:Sob}, it holds for any $1\leq \tl{p}\leq r$ that
\begin{align}
&\paren*{\int_{\R^d}\jp{\xi}^{r( (\ka-\frac{2(r-1)}{r}) s+1)}\paren*{\int_{\substack{|\eta|\leq \frac{|\xi-\eta|}{8} \\ |\eta|>1}}e^{\phi^t(1+|\xi-\eta|^s)} |\hat{f}(\xi-\eta)| |\eta|^{1-\gamma}e^{\phi^t(1+|\eta|^s)}|\hat{g}(\eta)| d\eta}^r d\xi }^{1/r} \nn\\
&\lesssim_{r,\ka,s,\ga} \|\jp{\nabla}^{(\ka-\frac{2(r-1)}{r})s +1} e^{\phi^t A^{1/2}}f\|_{\hat{L}^{\frac{\tl{p}r}{\tl{p}(r+1)-r}}} \|\jp{\nabla}^{1-\ga}e^{\phi^t A^{1/2}} g\|_{\hat{L}^{\tl{p}}} \nn\\
&\lesssim_{d,s,\ka,\ga,r,\tl{p}} \|e^{\phi^t A^{1/2}} f\|_{\hat{W}^{\ka s +1,1}}\|e^{\phi^t A^{1/2}} g\|_{\hat{W}^{1-\ga,1}} \indic_{r=1} \nn\\
&\ph + \|e^{\phi^t A^{1/2}} f\|_{\hat{W}^{(\ka-\frac{2(r-1)}{r})s + 1,r}}\|e^{\phi^t A^{1/2}} g\|_{\hat{W}^{(1-\ga+\frac{d(r-1)}{r})+,r}}\indic_{\substack{r>1 \\ \tl{p}=1}} \nn\\
&\ph + \|e^{\phi^t A^{1/2}} f\|_{\hat{W}^{((\ka-\frac{2(r-1)}{r})s + 1+\frac{d(r-1)}{r})+,r}}\|e^{\phi^t A^{1/2}} g\|_{\hat{W}^{1-\ga,r}}\indic_{\substack{r>1 \\ \tl{p}=r}} \nn\\
&\ph +  \|e^{\phi^t A^{1/2}} f\|_{\hat{W}^{((\ka-\frac{2(r-1)}{r})s + 1+\frac{d(\tl{p}-1)}{\tl{p}})+,r}}\|e^{\phi^t A^{1/2}} g\|_{\hat{W}^{(1-\ga + \frac{d(r-\tl{p})}{r\tl{p}})+,r}}\indic_{\substack{r>1 \\ 1<\tl{p}<r}}. \label{eq:monhl}
\end{align}

\medskip
\textbullet \ If $\frac{1}{8}< \frac{|\xi-\eta|}{|\eta|}<8$, then $\min\{|\xi-\eta|,|\eta|\} \gtrsim |\xi|$. So we can evenly distribute the derivatives between $f$ and $g$ and use Young's inequality together with \cref{lem:Sob} to obtain
\begin{align}
&\paren*{\int_{\R^d}\jp{\xi}^{r( (\ka-\frac{2(r-1)}{r}) s+1)}\paren*{\int_{\substack{ \frac{1}{8} \leq \frac{|\xi-\eta|}{|\eta|} \leq 8 \\ |\eta|>1}}e^{\phi^t(1+|\xi-\eta|^s)} |\hat{f}(\xi-\eta)| |\eta|^{1-\gamma}e^{\phi^t(1+|\eta|^s)}|\hat{g}(\eta)| d\eta}^r d\xi }^{1/r} \nn\\
&\lesssim_{r,\ka,s,\ga} \|\jp{\nabla}^{\frac{(\ka-\frac{2(r-1)}{r})s +2-\ga}{2}} e^{\phi^t A^{1/2}}f\|_{\hat{L}^{\frac{2r}{r+1}}} \|\jp{\nabla}^{\frac{(\ka-\frac{2(r-1)}{r})s +2-\ga}{2}} e^{\phi^t A^{1/2}}g\|_{\hat{L}^{\frac{2r}{r+1}}} \nn\\
&\lesssim_{d,r,\ka,s,\ga} \|e^{\phi^t A^{1/2}} f\|_{\hat{W}^{\frac{\ka s +2-\ga}{2},1}} \|e^{\phi^t A^{1/2}} g\|_{\hat{W}^{\frac{\ka s +2-\ga}{2},1}}  \indic_{r=1} \nn\\
&\ph +\|e^{\phi^t A^{1/2}} f\|_{\hat{W}^{(\frac{(\ka-\frac{2(r-1)}{r})s +2 + \frac{d(r-1)}{r}-\ga}{2})+,r}}\|e^{\phi^t A^{1/2}} g\|_{\hat{W}^{(\frac{(\ka-\frac{2(r-1)}{r})s +2 + \frac{d(r-1)}{r}-\ga}{2})+,r}} \indic_{r>1}. \label{eq:monhh}
\end{align}

\medskip

Combining the estimates \eqref{eq:monloga}, \eqref{eq:monlh}, \eqref{eq:monhl}, \eqref{eq:monhh}, we see that
\begin{multline}\label{eq:Bnd}
\Bigg(\int_{\R^d} \jp{\xi}^{(r\ka - 2(r-1))s+r} \left|\int_{\R^d}|\eta|^{1-\ga} \left|\paren*{e^{\phi^t(1+|\xi-\eta|^s)}\hat{f}(\xi-\eta)} \paren*{e^{\phi^t(1+|\eta|^s)}\hat{g}(\eta)}\right| d\eta \right|^r d\xi\Bigg)^{1/r} \\
\lesssim_{d,s,\ga,\ka,r,q,p,\tl{p}} \|e^{\phi^t A^{1/2}}f\|_{\hat{W}^{(\ka-\frac{2(r-1)}{r})s+1, r}} \|e^{\phi^t A^{1/2}} g\|_{\hat{W}^{1-\ga,\frac{2q}{q-1}}}\indic_{\ga>1} \\
+ \Bigg(\|e^{\phi^t A^{1/2}} f\|_{\hat{W}^{0,1}} \|e^{\phi^t A^{1/2}} g\|_{\hat{W}^{\ka s+2-\ga,1}} +  \|e^{\phi^t A^{1/2}} f\|_{\hat{W}^{\ka s +1,1}}\|e^{\phi^t A^{1/2}} g\|_{\hat{W}^{1-\ga,1}} \\
+ \|e^{\phi^t A^{1/2}} f\|_{\hat{W}^{\frac{\ka s +2-\ga}{2},1}} \|e^{\phi^t A^{1/2}} g\|_{\hat{W}^{\frac{\ka s +2-\ga}{2},1}}  \Bigg)\indic_{r=1} \\
+\|e^{\phi^t A^{1/2}} f\|_{\hat{W}^{\frac{d(p-1)}{p}+,r}}\|e^{\phi^t A^{1/2}} g\|_{\hat{W}^{((\ka-\frac{2(r-1)}{r})s+2-\ga+\frac{d(r-p)}{rp})+, r}} \indic_{\substack{r>1 \\ 1<p<r}}\\
+ \|e^{\phi^t A^{1/2}} f\|_{\hat{W}^{((\ka-\frac{2(r-1)}{r})s + 1+\frac{d(\tl{p}-1)}{\tl{p}})+,r}}\|e^{\phi^t A^{1/2}} g\|_{\hat{W}^{(1-\ga + \frac{d(r-\tl{p})}{r\tl{p}})+,r}}\indic_{\substack{r>1 \\ 1<\tl{p}<r}} \\
+\|e^{\phi^t A^{1/2}} f\|_{\hat{W}^{(\frac{(\ka-\frac{2(r-1)}{r})s +2 + \frac{d(r-1)}{r}-\ga}{2})+,r}}\|e^{\phi^t A^{1/2}} g\|_{\hat{W}^{(\frac{(\ka-\frac{2(r-1)}{r})s +2 + \frac{d(r-1)}{r}-\ga}{2})+,r}} \indic_{r>1}.
\end{multline}
Above, we have limited ourselves to the cases $r=1$ and $r>1,\ 1<p,\tl{p} <\infty$ so as to simplify the exposition (the cost is an insignificant $\vep$ loss at the endpoint exponents). In order to obtain the desired estimate \eqref{eq:Bbnd}, we need the maximum Sobolev index of the norms in \eqref{eq:Bnd} to be $<(\ka+\frac{2}{r})s$. This leads us to make the following assumptions on the parameters $d,s,\ka,\ga,r,p,\tl{p}$.
\begin{enumerate}[(U1)]
\item\label{U1} $r=1$
\begin{enumerate}
\item\label{1a}
$\ka s +2-\ga \leq (\ka+2)s$,
\item\label{1b}
$\max\{\ka s+1,1-\ga\} \leq (\ka+2)s$,
\item\label{1c}
$\frac{\ka s+2-\ga}{2} \leq (\ka+2)s$.
\end{enumerate}
\item\label{U2} $r>1$
\begin{enumerate}
\item\label{2a}
$(\ka - \frac{2(r-1)}{r})s + 1 \leq (\ka + \frac{2}{r})s$;
\item\label{2b}
there exists $p\in (1,r)$ such that $\max\{\frac{d(p-1)}{p}, (\ka - \frac{2(r-1)}{r})s + 2-\ga + \frac{d(r-p)}{rp}\} < (\ka + \frac{2}{r})s$;
\item\label{2c}
there exists $\tl{p}\in (1,r)$ such that $\max\{(\ka-\frac{2(r-1)}{r})s + 1+\frac{d(\tl{p}-1)}{\tl{p}},1-\ga + \frac{d(r-\tl{p})}{r\tl{p}} \} < (\ka+\frac{2}{r})s$;
\item\label{2d}
$\frac{(\ka-\frac{2(r-1)}{r})s +2 + \frac{d(r-1)}{r}-\ga}{2} < (\ka+\frac{2}{r})s$.
\end{enumerate}
\end{enumerate}
Let us analyze the above conditions.

For \ref{1a}, observe
\begin{equation}
\ka s + 2-\ga \leq (\ka+2)s \Longleftrightarrow \frac{2-\ga}{2}\leq s.
\end{equation}
Since $\ga>0$ by assumption, we can always choose $s$ sufficiently close to $1$, so that this condition holds. For \ref{1b}, the inequality is equivalent to
\begin{equation}
\frac{1}{2}\leq s \quad \text{and} \quad 1-\ga-2s \leq \ka s.
\end{equation}
The first inequality is true by assumption, and the second inequality holds by taking $\ka$ sufficiently large depending on given $\ga,s$. \ref{1c} is equivalent to
\begin{equation}
2-\ga-4s \leq \ka s,
\end{equation}
which holds by taking $\ka$ sufficiently large depending on given $s,\ga$. \ref{2a} is equivalent to
\begin{equation}
-\frac{2(r-1)s}{r} + 1 \leq \frac{2s}{r} \quad\Longleftrightarrow\quad \frac{1}{2} \leq s,
\end{equation}
which holds by assumption. For \ref{2b}, observe that
\begin{equation}
\frac{d(p-1)}{p} < (\ka + \frac{2}{r})s \quad \Longleftrightarrow\quad d-(\ka + \frac{2}{r})s < \frac{d}{p}
\end{equation}
and
\begin{equation}
(\ka - \frac{2(r-1)}{r})s + 2-\ga + \frac{d(r-p)}{rp} < (\ka + \frac{2}{r})s \quad\Longleftrightarrow\quad \frac{d}{p} < \frac{d}{r} +\ga+2s-2.
\end{equation}
Thus, it is possible to find such a $p\in (1,r)$ if and only if
\begin{align}
d-(\ka + \frac{2}{r})s < \frac{d}{r} +\ga+2s-2\quad \Longleftrightarrow\quad  d-\frac{2s}{r}-\frac{d}{r}-\ga-2s+2 < \ka s,
\end{align}
which holds by taking $\ka$ sufficiently large depending on given $d,\ga,r,s$. For \ref{2c}, observe
\begin{equation}
(\ka-\frac{2(r-1)}{r})s + 1+\frac{d(\tl{p}-1)}{\tl{p}} <(\ka+\frac{2}{r})s \quad \Longleftrightarrow \quad d+1-2s < \frac{d}{\tl{p}}
\end{equation}
and
\begin{equation}
1-\ga+\frac{d(r-\tl{p})}{r\tl{p}} < (\ka+\frac{2}{r})s \quad \Longleftrightarrow \quad \frac{d}{\tl{p}} < \ka s + \frac{2s+d}{r}+\ga-1.
\end{equation}
It is possible to find such a $\tl{p}\in (1,r)$ if and only if
\begin{equation}
d+1-2s < \ka s + \frac{2s+d}{r}+\ga-1 \quad \Longleftrightarrow \quad d+2-2s-\frac{2s+d}{r} - \ga < \ka s,
\end{equation}
which holds by taking $\ka$ sufficiently large depending on given $d,\ga,r,s$. Lastly, \ref{2d} is equivalent to
\begin{equation}
d+2 -\ga-2s-\frac{d+2s}{r}<\ka s,
\end{equation}
which holds by taking $\ka$ sufficiently large depending on given $d,\ga,r,s$.

Next, we observe that in order to obtain the desired estimate \eqref{eq:Bbnd}, we need the minimum Sobolev index of the norms appearing in \eqref{eq:Bnd} to be $\leq \ka s$. This leads us to make the following additional assumptions on the parameters $d,s,\ka,\ga,r,p,\tl{p}$:
\begin{enumerate}[(L1)]
\item\label{L1} $r=1$
	\begin{enumerate}
	\item\label{1'a} $1-\ga \leq \ka s$ 
	\item\label{1'b} $\frac{\ka s+2-\ga}{2} \leq \ka s$ 
	\end{enumerate}
\item\label{L2} $r>1$
	\begin{enumerate}
	\item\label{2'a} there exists $p\in (1,r)$ such that $\min\{\frac{d(p-1)}{p}, (\ka - \frac{2(r-1)}{r})s + 2-\ga + \frac{d(r-p)}{rp}\} < \ka s$ 
	\item\label{2'b} there exists $\tl{p}\in (1,r)$ such that $\min\{(\ka - \frac{2(r-1)}{r})s + 1+\frac{d(\tl{p}-1)}{\tl{p}}, 1-\ga + \frac{d(r-\tl{p})}{r\tl{p}}\} < \ka s$
	\item\label{2'c} $\frac{(\ka - \frac{2(r-1)}{r})s+2+\frac{d(r-1)}{r}-\ga}{2} < \ka s$.
	\end{enumerate}
\end{enumerate}
Let us analyze the preceding assumptions.

For any $\frac{1}{2}<s\leq 1$, \ref{1'a} always holds if we assume $2(1-\ga) \leq \ka$. For \ref{1'b},
\begin{equation}
\frac{\ka s+2-\ga}{2} \leq \ka s \quad \Longleftrightarrow \quad 2-\ga \leq \ka s,
\end{equation}
which is ensured by taking $\ka$ sufficiently large depending on given $\ga,s$. For \ref{2'a}, we need
\begin{equation}
\frac{d(p-1)}{p} < \ka s \quad \text{or} \quad (\ka - \frac{2(r-1)}{r})s + 2-\ga + \frac{d(r-p)}{rp} < \ka s.
\end{equation}
Since $p<r$ and therefore $\frac{d(p-1)}{p} < \frac{d(r-1)}{r}$, the first inequality is valid if $\frac{d(r-1)}{r}<\ka s$, which holds by taking $\ka$ sufficiently large depending on given $d,r,s$. For the second inequality, we see
\begin{align}
(\ka - \frac{2(r-1)}{r})s + 2-\ga + \frac{d(r-p)}{rp} < \ka s \quad \Longleftrightarrow \quad 2-2s-\ga + \frac{d}{p}+\frac{(2s-d)}{r} < 0.
\end{align}
The left-hand side of the second inequality is maximized by choosing $p=1$, so it would suffice to assume $d,\ga,s,r$ satisfy
\begin{equation}
2-2s-\ga+d+\frac{(2s-d)}{r}<0.
\end{equation}
For \ref{2'b}, we need
\begin{equation}
(\ka - \frac{2(r-1)}{r})s + 1+\frac{d(\tl{p}-1)}{\tl{p}} < \ka s \quad \text{or} \quad 1-\ga + \frac{d(r-\tl{p})}{r\tl{p}} < \ka.
\end{equation}
The first inequality is equivalent to
\begin{align}
d+1-2s +\frac{2s}{r} - \frac{d}{\tl{p}} < 0.
\end{align}
The left-hand side is maximized by choosing $\tl{p}=r$, therefore it suffices to assume
\begin{equation}
d+1-2s+\frac{(2s-d)}{r}<0.
\end{equation}
The second inequality is equivalent to
\begin{equation}
1-\ga+\frac{d}{\tl{p}}-\frac{d}{r}<\ka s,
\end{equation}
the left-hand side of which is maximized by choosing $\tl{p}=1$. So, it would suffice to assume
\begin{equation}
1-\ga + d-\frac{d}{r}<\ka s,
\end{equation}
which is seen to hold by choosing $\ka$ sufficiently large depending on given $d,\ga,r,s$. For \ref{2'c}, we observe that the inequality is equivalent to
\begin{equation}
d+2-\ga-2s+\frac{2s-d}{r} < \ka s,
\end{equation}
which is valid provided that $\ka$ is sufficiently large depending on given $d,\ga,r,s$.

The preceding sets of assumptions tell us that given $d,\ga,r,s$, there is a threshold $\ka_0$ depending on $d,\ga,r,s$, such that for all $\ka>\ka_0$, the right-hand side of \eqref{eq:Bnd} is controlled by
\begin{multline}
\|e^{\phi^t A^{1/2}}f\|_{\hat{W}^{(\ka-\frac{2(r-1)}{r})s+1, r}} \|e^{\phi^t A^{1/2}} g\|_{\hat{W}^{1-\ga,\frac{2q}{q-1}}}\indic_{\ga>1} \\
+ \|e^{\phi^t A^{1/2}}f\|_{\hat{W}^{(\ka + \frac{2}{r})s,r}} \|e^{\phi^t A^{1/2}}g\|_{\hat{W}^{\ka s,r}} + \|e^{\phi^t A^{1/2}}f\|_{\hat{W}^{\ka s,r}}\|e^{\phi^t A^{1/2}}g\|_{\hat{W}^{(\ka + \frac{2}{r})s,r}}.
\end{multline}
Recalling the starting inequality \eqref{eq:Bbndstart}, we see that the proof is complete.
\end{proof}

We are now prepared to prove \cref{prop:mon}.
\begin{proof}[Proof of \cref{prop:mon}]
Given $1< q < \frac{d}{\ga-1}$, set $q'\coloneqq \frac{q}{q-1}$. We first show that the quantity
\begin{equation}
\|e^{\phi^t A^{1/2}}\mu^t\|_{\hat{W}^{\ka s, 2q'}}^{2q'}
\end{equation}
is strictly decreasing on an interval $[0,T]$, provided that $\ka$ sufficiently large depending on $q'$ and the higher norm $\|e^{\phi^t A^{1/2}}\mu^t\|_{\hat{W}^{(\ka + \frac{1}{q'})s, 2q'}}$ remains finite on $[0,T]$. Indeed, let $\ka_{0,2q'}$ denote the threshold given by \cref{lem:Bbnd} with $r=2q'$, and suppose that $\ka>\ka_{0,2q'}$. Using H\"older's inequality and \Cref{lem:Bbndpre,lem:Bbnd}, we see that
\begin{multline}
\int_{\R^d} \left|e^{\phi^t (1+|\xi|^s)}\jp{\xi}^{\ka s} \hat{\mu}^t(\xi) \right|^{2q'-1}\jp{\xi}^{\ka s} e^{\phi^t(1+|\xi|^s)}|\F(B^t(\mu^t,\mu^t))(\xi)|d\xi \\
\leq C|\M|\|e^{\phi^t A^{1/2}} \mu^t\|_{\hat{W}^{(\ka+\frac{1}{q'})s,2q'}}^{2q'-1}\Bigg(\|e^{\phi^t A^{1/2}}\mu^t\|_{\hat{W}^{(\ka-2+\frac{1}{q'})s+1, 2q'}} \|e^{\phi^t A^{1/2}} \mu^t\|_{\hat{W}^{1-\ga,2q'}}\indic_{\ga>1} \\
+ \|e^{\phi^t A^{1/2}}\mu^t\|_{\hat{W}^{(\ka + \frac{1}{q'})s,2q'}} \|e^{\phi^t A^{1/2}}\mu^t\|_{\hat{W}^{\ka s,2q'}}\Bigg)
\end{multline}
where the constant $C>0$ depends only on $d,\ga,q,s,\ka$. Applying this bound to the differential identity \eqref{eq:enid} (with $r$ replaced by $2q'$), it follows that
\begin{multline}
\frac{d}{dt} \frac{1}{2q'} \|e^{\phi^t A^{1/2}} \mu^t\|_{\hat{W}^{\ka s,2q'}}^{2q'} \leq -\paren*{\frac{\nu^2}{2} -\beta}\|e^{\phi^t A^{1/2}}\mu^t\|_{\hat{W}^{(\ka+\frac{1}{q'})s,2q'}}^{2q'} \\
+C|\M|\|e^{\phi^t A^{1/2}} \mu^t\|_{\hat{W}^{(\ka+\frac{1}{q'})s,2q'}}^{2q'-1}\Bigg(\|e^{\phi^t A^{1/2}}\mu^t\|_{\hat{W}^{(\ka-2+\frac{1}{q'})s+1, 2q'}} \|e^{\phi^t A^{1/2}} g\|_{\hat{W}^{1-\ga,2q'}}\indic_{\ga>1} \\
+ \|e^{\phi^t A^{1/2}}\mu^t\|_{\hat{W}^{(\ka + \frac{1}{q'})s,2q'}} \|e^{\phi^t A^{1/2}}\mu^t\|_{\hat{W}^{\ka s,2q'}} \Bigg).
\end{multline}
Note that since $s>\frac{1}{2}$ by assumption, $(\ka-2+\frac{1}{q'})s + 1 < (\ka + \frac{1}{q'})s$. Thus, the right-hand side of the preceding inequality is $\leq$
\begin{equation}\label{eq:wtsnn}
\|e^{\phi^t A^{1/2}}\mu^t\|_{\hat{W}^{(\ka+\frac{1}{q'})s,2q'}}^{2q'}\Bigg(C|\M|\|e^{\phi^t A^{1/2}}\mu^t\|_{\hat{W}^{\ka s,2q'}} -\paren*{\frac{\nu^2}{2} -\beta}\Bigg).
\end{equation}
We now want to show that if $\|e^{\al A^{1/2}}\mu^0\|_{\hat{W}^{\ka s, 2q'}} < \frac{\nu^2 - 2\beta}{2C|\M|}$ and the first factor remains finite on $[0,T]$, then $\|e^{\phi^t A^{1/2}}\mu^0\|_{\hat{W}^{\ka s, 2q'}}$ is strictly decreasing on $[0,T]$. To do this, we use a continuity argument.

Suppose that monotonicity does not hold over $[0,T]$. Then since the function $t\mapsto \|e^{\phi^t A^{1/2}}\mu^t\|_{\hat{W}^{\ka s,2q'}}$ is continuous on $[0,T]$, there exists a minimal time $T_*\in (0,T]$ such that
\begin{equation}\label{eq:Tcontra}
\|e^{\phi^{T_*} A^{1/2}}\mu^{T_*}\|_{\hat{W}^{\ka s, 2q'}} \geq \frac{\nu^2 - 2\beta}{2C|\M|}.
\end{equation}
By continuity and the extreme value theorem, we in fact have an equality in the preceding relation. By minimality of $T_*$,
\begin{equation}\label{eq:ubhold}
\|e^{\phi^t A^{1/2}}\mu^t\|_{\hat{W}^{\ka s, 2q'}} < \frac{\nu^2-2\beta}{2C|\M|} \qquad \forall t\in [0,T_*),
\end{equation}
which, by \eqref{eq:wtsnn} and the fundamental theorem of calculus, implies that there exists an $\vep>0$ such that
\begin{equation}
\|e^{\al A^{1/2}}\mu^0\|_{\hat{W}^{\ka s, 2q'}}^{2q'} - \|e^{\phi^t A^{1/2}}\mu^t\|_{\hat{W}^{\ka s, 2q'}}^{2q'} \geq \vep \qquad \forall t\in [0,T_*].
\end{equation}
But this evidently contradicts \eqref{eq:Tcontra}. Thus, the inequality \eqref{eq:ubhold} holds on $[0,T]$, which implies that the right-hand side of inequality \eqref{eq:wtsnn} is negative on $[0,T]$, as desired.

\medskip
We next show that this monotonicity property of the norm $\|e^{\phi^t A^{1/2}}\mu^t\|_{\hat{W}^{\ka s, 2q'}}$ also implies a monotonicity property for the norm $\|e^{\phi^t A^{1/2}}\mu^t\|_{\hat{W}^{\tl{\ka} s,r}}$, for appropriate $\tl{\ka}$, provided that $\|e^{\al A^{1/2}}\mu^0\|_{\hat{W}^{\tl{\ka} s, r}}$ is sufficiently small. If $\ga\leq 1$, then this step is unnecessary and the argument given above suffices with $2q'$ replaced by $r$.

Let $\ka_{0,r}$ denote the regularity threshold given by \cref{lem:Bbnd}, and let $\ka_r > \ka_{0,r}$. Again using H\"older's inequality and \Cref{lem:Bbndpre,lem:Bbnd}, we see that
\begin{multline}
\int_{\R^d} \left|e^{\phi^t (1+|\xi|^s)}\jp{\xi}^{\ka_r s} \hat{\mu}^t(\xi) \right|^{r-1}\jp{\xi}^{\ka_r s} e^{\phi^t(1+|\xi|^s)}|\F(B^t(\mu^t,\mu^t))(\xi)|d\xi \\
\leq C_{r,q}|\M|\|e^{\phi^t A^{1/2}} \mu^t\|_{\hat{W}^{(\ka_r+\frac{2}{r})s,r}}^{r-1}\Bigg(\|e^{\phi^t A^{1/2}}\mu^t\|_{\hat{W}^{(\ka_r-2+\frac{2}{r})s+1, r}} \|e^{\phi^t A^{1/2}} \mu^t\|_{\hat{W}^{1-\ga,2q'}}\indic_{\ga>1} \\
+ \|e^{\phi^t A^{1/2}}\mu^t\|_{\hat{W}^{(\ka_r + \frac{2}{r})s,r}} \|e^{\phi^t A^{1/2}}\mu^t\|_{\hat{W}^{\ka_r s,r}} \Bigg),
\end{multline}
where the constant $C_{r,q}>0$ depends only on $d,\ga,r,q,s,\ka_r$. We use the subscript $r$ to emphasize the dependence on $r$, as we shall momentarily invoke another constant and regularity parameter depending on $q$. Applying the preceding bound to the differential identity \eqref{eq:enid} it follows that
\begin{multline}
\frac{d}{dt}\frac{1}{r}\|e^{\phi^t A^{1/2}}\mu^t\|_{\hat{W}^{\ka_r s,r}}^r \leq \Bigg(C_{r,q}|\M|\Big(\|e^{\phi^t A^{1/2}}\mu^t\|_{\hat{W}^{\ka_r s,r}} + \|e^{\phi^t A^{1/2}} \mu^t\|_{\hat{W}^{1-\ga,2q'}}\indic_{\ga>1}\Big)\\
-\paren*{\frac{\nu^2}{2} -\beta}\Bigg) \|e^{\phi^t A^{1/2}}\mu^t\|_{\hat{W}^{(\ka_r+\frac{2}{r})s,r}}^r.
\end{multline}
Since $1-\ga < 0$ if $\ga>1$, we know that there is a constant $C_q$ depending on $d,\ga,q,s,\ka_q$, for $\ka_q >\ka_{0,2q'}$, such that if
\begin{equation}
\|e^{\al A^{1/2}}\mu^0\|_{\hat{W}^{\ka_q s, 2q'}} < \frac{\nu^2-2\beta}{2C_q|\M|} \quad \text{and} \quad \|\mu\|_{C_T^0\G_{\phi}^{(\ka_q+\frac{1}{q'}), 2q'}} < \infty,
\end{equation}
then $\|e^{\phi^t A^{1/2}}\mu^t\|_{\hat{W}^{\ka_q s, 2q'}}$ is strictly decreasing on $[0,T]$. A fortiori,
\begin{equation}
\|e^{\phi^t A^{1/2}}\mu^t\|_{\hat{W}^{1-\ga,2q'}} \leq \|e^{\al A^{1/2}}\mu^0\|_{\hat{W}^{\ka_q, 2q'}} \qquad \forall t\in [0,T].
\end{equation}
Therefore, suppose that
\begin{align}
\|e^{\al A^{1/2}} \mu^0 \|_{\hat{W}^{\ka_q s, 2q'}} < \min\left\{\frac{\nu^2-2\beta}{2C_q|\M|}, \frac{\nu^2-2\beta}{2C_{r,q}|\M|}\right\}, \\
\|e^{\al A^{1/2}} \mu^0 \|_{\hat{W}^{\ka_r s, r}} < \frac{\nu^2-2\beta}{2C_{r,q}|\M|} - \|e^{\al A^{1/2}} \mu^0 \|_{\hat{W}^{\ka_q s, 2q'}}.
\end{align}
Under these assumptions, it follows by repeating the continuity argument from above that the quantity $\|e^{\phi^t A^{1/2}}\mu^t\|_{\hat{W}^{\ka_r s, r}}$ is strictly decreasing on $[0,T]$. With this last bit, the proof of \cref{prop:mon} is complete.
\end{proof}

\subsection{Proof of \cref{thm:main}}\label{ssec:globmr}
We now use the local well-posedness established by \cref{prop:lwp} together with the monotonicity of the Gevrey norm established by \cref{prop:mon} in order to show that with high probability, solutions in the class we consider are global. Moreover, their Gevrey norm strictly decreases as time $t\rightarrow \infty$. This then proves \cref{thm:main}. To show the desired result, we use a refined reformulation of the iterative argument from \cite[Section 5]{BNSW2020}.

Let us first present the case $0<\ga\leq 1$, which is simpler due to not needing a two-tiered norm. Fix $\ep>0$ and suppose that $\mu^0\in \G_{\al+\ep}^{\sigma_0,r}$ for $\sigma_0$ above the regularity threshold $\ka_{0,r}$ given by \cref{prop:mon}. Throughout this subsection, we assume that the parameters $d,\ga,r,s,\sigma_0,\al,\be,\nu$ satisfy all the constraints of \cref{thm:main}. We also assume that
\begin{equation}\label{eq:sigma0tz}
\|\mu^0\|_{\G_{\al+\ep}^{\sigma_0,r}} < \frac{\nu^2-2\beta}{C_{mon}|\M|},
\end{equation}
where $C_{mon}=C_r>0$ is the constant from \cref{prop:mon}. Assuming a realization of $W$ from $\Omega_{\al,\be,\nu}$ and given $r\geq 1$ sufficiently small depending on $d,\ga,s$, \cref{prop:lwp} implies that for any $0<\sigma<\frac{2s-1}{s}$, with $1-\ga\leq \sigma s$, sufficiently large depending on $d,\ga,s,r$, there is a maximal solution $\mu$ to the Cauchy problem \eqref{eq:cp} with lifespan $[0,T_{\max,\sigma,\ep})$, such that $\mu$ belongs to $C_T^0\G_{\phi+\ep}^{\sigma,r}$ for any $0\leq T<T_{\max,\sigma,\ep}$. Our main lemma to conclude global existence is the following result relating the lifespan of $\mu^t$ in $\G_{\phi^t+\ep}^{\sigma,r}$ to the lifespan of $\mu$ in the \emph{larger space} $\G_{\phi^t+\ep'}^{\sigma,r}$, for any $\ep'\in [0,\ep)$.

\begin{lemma}\label{lem:lspan}
Let $\mu$ be as above. There exists a constant $C>0$ depending on $d,\ga,r,s,\sigma,\be,\nu$ such that for any $0\leq \ep_2<\ep_1\leq \ep$, the maximal times of existence $T_{\max,\sigma,\ep_1}, T_{\max,\sigma,\ep_2}$ of $\mu^t$ as taking values in $\G_{\phi^t+\ep_1}^{\sigma,r}, \G_{\phi^t+\ep_2}^{\sigma,r}$, respectively, satisfy the inequality
\begin{equation}\label{eq:Tmaxeplb}
T_{\max,\sigma,\ep_2} \geq T_{\max,\sigma,\ep_1} + C(|\M|\|\mu^0\|_{\G_{\al+\ep}^{\sigma_0,r}})^{-\frac{2s}{2s-\sigma s-1}}.
\end{equation}
%and $t\mapsto \|\mu^t\|_{G_{\phi^t+\ep'}^{\sigma}}$ is strictly decreasing on $[0,T_{\max,\ep'})$. 
\end{lemma}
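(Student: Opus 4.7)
The plan is a restart-and-extend argument. Fix $t^* \in [0, T_{\max,\sigma,\ep_1})$; my goal is to apply \cref{prop:lwp} at time $t^*$ with restart datum $\mu^{t^*}$, but in the strictly weaker space $\G_{\phi^{t^*}+\ep_2}^{\sigma,r}$. Because the extension time furnished by \cref{prop:lwp} depends only on $|\M|$ and the norm of the restart datum, it suffices to bound $\|\mu^{t^*}\|_{\G_{\phi^{t^*}+\ep_2}^{\sigma,r}}$ by a fixed multiple of $\|\mu^0\|_{\G_{\al+\ep}^{\sigma_0,r}}$ uniformly in $t^*$; the desired lifespan extension then follows by letting $t^* \nearrow T_{\max,\sigma,\ep_1}$ and gluing via uniqueness of the local solution.

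To obtain this uniform bound I will combine monotonicity at an intermediate Gevrey weight with the embedding in \cref{lem:Gemb}. First, I apply \cref{prop:mon} with Gevrey weight $\ep_1$ and Sobolev index $\sigma_0$: the smallness hypothesis \eqref{eq:monidr} reduces to \eqref{eq:sigma0tz} since $\ep_1 \leq \ep$, and the auxiliary higher-Sobolev hypothesis $\mu \in C_T^0 \G_{\phi+\ep_1}^{\sigma_0+\frac{2}{r},r}$ for $T < T_{\max,\sigma,\ep_1}$ holds either by inclusion (when the parameter choice furnishes $\sigma \geq \sigma_0 + \frac{2}{r}$) or by a brief application of \cref{lem:Gemb} after shaving off an arbitrarily small piece of the Gevrey weight. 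This yields the $t$-uniform bound
\[
\|\mu^t\|_{\G_{\phi^t+\ep_1}^{\sigma_0,r}} \leq \|\mu^0\|_{\G_{\al+\ep}^{\sigma_0,r}} \qquad \text{for all } t \in [0, T_{\max,\sigma,\ep_1}).
\]
Next, I use \cref{lem:Gemb} a second time to upgrade the Sobolev index from $\sigma_0$ to $\sigma$ at the cost of dropping the Gevrey weight from $\ep_1$ to $\ep_2$, obtaining
\[
\|\mu^{t^*}\|_{\G_{\phi^{t^*}+\ep_2}^{\sigma,r}} \leq \frac{\lceil \sigma-\sigma_0\rceil!}{(\ep_1-\ep_2)^{\lceil \sigma-\sigma_0\rceil}} \|\mu^{t^*}\|_{\G_{\phi^{t^*}+\ep_1}^{\sigma_0,r}} \leq \frac{\lceil \sigma-\sigma_0\rceil!}{(\ep_1-\ep_2)^{\lceil \sigma-\sigma_0\rceil}} \|\mu^0\|_{\G_{\al+\ep}^{\sigma_0,r}}.
\]
Feeding this into the quantitative local time from \cref{prop:lwp} produces a $t^*$-independent extension length of the form $C(|\M|\|\mu^0\|_{\G_{\al+\ep}^{\sigma_0,r}})^{-\frac{2s}{2s-\sigma s - 1}}$, where the $(\ep_1-\ep_2)$-prefactor is absorbed into $C$.

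The step I expect to be the main technical point is verifying that \cref{prop:lwp} remains valid when restarted at a shifted initial time $t^* > 0$ rather than at $0$. Inspecting the proofs of \cref{lem:cmlin} and \cref{lem:cmnl}, the only properties of the Brownian path used are the pointwise inequality $\phi^\tau - \nu W^\tau \geq 0$---which, on the good set $\Omega_{\al,\be,\nu}$, automatically persists on $[t^*,\infty)$---and the translation-invariant algebraic identity $\phi^t - \phi^\tau = \beta(t-\tau)$. Hence the contraction mapping transplants to any starting time $t^*$ without structural modification, and the quantitative existence time depends only on $|\M|$ and the norm of the restart datum in $\G_{\phi^{t^*}+\ep_2}^{\sigma,r}$. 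Gluing at $t^*$ via the uniqueness part of \cref{prop:lwp} and passing $t^* \nearrow T_{\max,\sigma,\ep_1}^{-}$ then yields \eqref{eq:Tmaxeplb}.
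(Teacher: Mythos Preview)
Your overall restart-and-extend strategy matches the paper's, but there is a genuine gap stemming from a reversed inequality. In the setup preceding the lemma, $\sigma$ is the local well-posedness index constrained by $\sigma<\frac{2s-1}{s}\leq 1$, whereas $\sigma_0$ exceeds the monotonicity threshold $\ka_{0,r}$ from \cref{prop:mon}; in particular $\sigma\leq\sigma_0$ (the paper explicitly uses ``we can ensure that $\sigma\leq\sigma'$'' with $\sigma'\leq\sigma_0$). Your ``upgrade the Sobolev index from $\sigma_0$ to $\sigma$'' step therefore goes in the wrong direction: passing from $\sigma_0$ to the \emph{smaller} $\sigma$ is free by the first inequality of \cref{lem:Gemb}, and no trade of Gevrey weight is needed. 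As written, your displayed bound introduces the factor $\lceil\sigma-\sigma_0\rceil!/(\ep_1-\ep_2)^{\lceil\sigma-\sigma_0\rceil}$, which you then ``absorb into $C$''. That makes $C$ depend on $\ep_1-\ep_2$, contradicting the statement of the lemma and, more importantly, breaking the subsequent telescoping argument in the proof of \cref{thm:main}, where one sums $n$ copies of \eqref{eq:Tmaxeplb} with gaps of size $(\ep-\ep')/n$ and needs the right-hand side to be independent of $n$.

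The fix is immediate once you correct the order: after your monotonicity step you have $\|\mu^{t^*}\|_{\G_{\phi^{t^*}+\ep_1-\delta}^{\sigma_0,r}}\leq\|\mu^0\|_{\G_{\al+\ep}^{\sigma_0,r}}$, and since $\sigma\leq\sigma_0$ and $\ep_2<\ep_1-\delta$, the trivial embedding gives $\|\mu^{t^*}\|_{\G_{\phi^{t^*}+\ep_2}^{\sigma,r}}\leq\|\mu^0\|_{\G_{\al+\ep}^{\sigma_0,r}}$ with no $(\ep_1-\ep_2)$-dependence. For comparison, the paper avoids your ``shave an arbitrarily small piece'' manoeuvre altogether by reversing the order of operations: it first applies \cref{lem:Gemb} to pass from $(\ep_1,\sigma)$ to $(\ep_2,\sigma')$ with $\sigma'\in(\ka_{0,r},\sigma_0]$ (this also furnishes the finiteness of the $\sigma'+\tfrac{2}{r}$ norm needed by \cref{prop:mon}), and then runs monotonicity directly at Gevrey weight $\ep_2$. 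Both orderings work once $\sigma\leq\sigma_0$ is used correctly, but the paper's version sidesteps the limiting argument in $\delta$.
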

\begin{proof}
Fix $0<\ep_2 <\ep_1\leq \ep$. For any $\sigma'\geq\sigma$, it follows from \cref{lem:Gemb} that $\mu$ is also a solution in $C_T^0\G_{\phi+\ep_2}^{\sigma',r}$ for any $0\leq T<T_{\max,\sigma,\ep_1}$. Furthermore, we have the quantitative bound
\begin{equation}
\|\mu^t\|_{\G_{\phi^t+\ep_2}^{\sigma',r}} \leq \frac{\lceil{\sigma'-\sigma}\rceil !}{(\ep_1-\ep_2)^{\lceil{\sigma'-\sigma}\rceil}} \|\mu^t\|_{\G_{\phi^t+\ep_1}^{\sigma,r}} \qquad \forall \ 0\leq t<T_{\max,\sigma,\ep_1}.
\end{equation}
Choose $\sigma'$ such that
\begin{equation}
\sigma_0 \geq \sigma' > \ka_{0,r},
\end{equation}
where $\ka_{0,r}$ is the regularity threshold of \cref{prop:mon}. Using the assumption \eqref{eq:sigma0tz}, we can then apply \cref{prop:mon} to conclude that the function $t\mapsto \|\mu^t\|_{\G_{\phi^t+\ep_2}^{\sigma',r}}$ is strictly decreasing on the interval $[0,T]$ for any $T<T_{\max,\sigma,\ep_1}$. In particular, since we can ensure that $\sigma\leq \sigma'$, it follows from this monotonicity and \cref{lem:Gemb} that
\begin{equation}
\|\mu\|_{C_T^0\G_{\phi+\ep_2}^{\sigma,r}} \leq \|\mu\|_{C_T^0\G_{\phi+\ep_2}^{\sigma',r}} \leq \|\mu^0\|_{\G_{\al+\ep_2}^{\sigma',r}} \leq \|\mu^0\|_{\G_{\al+\ep}^{\sigma_0,r}} \qquad \forall \ 0\leq T<T_{\max,\sigma,\ep_1}.
\end{equation}
Let $C_{lwp,\sigma}$ be the constant from \cref{prop:lwp}, and choose $T_* < T_{\max,\sigma,\ep_1}$ so that
\begin{equation}
T_{\max,\sigma,\ep_1}-T_* \leq \frac{C_{lwp,\sigma}(|\M| \|\mu^0\|_{\G_{\al+\ep}^{\sigma_0,r}})^{-\frac{2s}{2s-\sigma s-1}}}{2}.
\end{equation}
Thus by relabeling time, we can apply \cref{prop:lwp} once more, but with initial datum $\mu^{T_*}$, to find that $\mu$ belongs to $C_{T_0}^0\G_{\phi+\ep_2}^{\sigma,r}$, where
\begin{align}
T_0 &= T_* + C_{lwp,\sigma}(|\M| \|\mu^0\|_{\G_{\al+\ep_2}^{\sigma,r}})^{-\frac{2s}{2s-\sigma s-1}} \nn\\
&= (T_*-T_{\max,\sigma,\ep_1}) + T_{\max,\sigma,\ep_1} + C_{lwp,\sigma}(|\M| \|\mu^0\|_{\G_{\al+\ep_2}^{\sigma,r}})^{-\frac{2s}{2s-\sigma s-1}} \nn\\
&\geq T_{\max,\sigma,\ep_1} + \frac{C_{lwp,\sigma}(|\M| \|\mu^0\|_{\G_{\al+\ep}^{\sigma_0,r}})^{-\frac{2s}{2s-\sigma s-1}}}{2},
\end{align}
where we have used that $\|\mu^0\|_{\G_{\al+\ep_2}^{\sigma,r}} \leq \|\mu^0\|_{\G_{\al+\ep_2}^{\sigma_0,r}}$. Taking $C=\frac{C_{lwp,\sigma}}{2}$, the preceding inequality is exactly what we need to show.
\end{proof}

\begin{proof}[Proof of \cref{thm:main} for $\ga\leq 1$]
Fix $0<\ep'<\ep$. Let $\sigma,\sigma_0$ be as above. If $T_{\max, \sigma, \ep'}<\infty$, then let $n\in\N$ be such that $n C (|\M|\|\mu^0\|_{\G_{\al+\ep}^{\sigma_0,r}})^{-\frac{2s}{2s-\sigma s-1}}$ satisfies the inequality
\begin{equation}
n C(|\M|\|\mu^0\|_{\G_{\al+\ep}^{\sigma_0,r}})^{-\frac{2s}{2s-\sigma s-1}} > T_{\max,\sigma,\ep'}-T_{\max,\sigma,\ep},
\end{equation}
where $C$ is the same constant as in the inequality \eqref{eq:Tmaxeplb}. We observe from \cref{lem:lspan} that
\begin{align}
T_{\max,\sigma,\ep'} - T_{\max,\sigma,\ep} &= \sum_{j=0}^{n-1} \paren*{T_{\max, \sigma,\ep - \frac{(j+1)(\ep-\ep')}{n}} - T_{\max,\sigma,\ep-\frac{j(\ep-\ep')}{n}}} \nn\\
&\geq \sum_{j=0}^{n-1} C(|\M|\|\mu^0\|_{\G_{\al+\ep}^{\sigma_0,r}})^{-\frac{2s}{2s-\sigma s-1}}\nn\\
&> T_{\max,\sigma,\ep'}-T_{\max,\sigma,\ep},
\end{align}
which is a contradiction. Thus, $T_{\max,\sigma,\ep'}=\infty$.

We have shown that for any $0<\ep'<\ep$ and any $0<\sigma<\frac{2s-1}{s}$ sufficiently large depending on $d,\ga,s,r$, it holds that $\|\mu\|_{C_T^0\G_{\phi+\ep'}^{\sigma,r}}<\infty$ for all $T>0$. Using the arbitrariness of $\ep'$, we see from \cref{lem:Gemb} that for any $T>0$, $\|\mu\|_{C_T^0\G_{\phi+\ep'}^{\sigma_0+\frac{2}{r},r}} < \infty$. Using that
\begin{equation}
\|\mu^0\|_{\G_{\al+\ep'}^{\sigma_0,r}} \leq \|\mu^0\|_{\G_{\al+\ep}^{\sigma_0,r}} < \frac{\nu^2-2\beta}{C_{mon}|\M|}
\end{equation}
by assumption \eqref{eq:sigma0tz}, where $C_{mon}>0$ is the constant from \cref{prop:mon}, we can apply \cref{prop:mon} on the interval $[0,T]$ to obtain that the function $t\mapsto \|\mu^t\|_{\G_{\phi^t+\ep'}^{\sigma_0,r}}$ is strictly decreasing on $[0,T]$. Since $T>0$ was arbitrary, we see that this monotonicity property holds on the entire interval $[0,\infty)$.

Finally, we show that $\mu$ actually belongs to $C_\infty^0\G_{\phi+\ep}^{\sigma_0,r}$ and that the decreasing property holds on $[0,\infty)$. Note that there is no longer a loss in the Gevrey index value (i.e. $\ep'=\ep$). To this end, we observe from the result of the preceding paragraph and Fatou's lemma that for any $t\geq 0$,
\begin{equation}
\|\mu^t\|_{\G_{\phi^t+\ep}^{\sigma_0,r}} = \lim_{\ep'\rightarrow\ep^-} \|\mu^t\|_{\G_{\phi^t+\ep'}^{\sigma_0,r}} \leq \lim_{\ep'\rightarrow \ep^-} \|\mu^0\|_{\G_{\al+\ep'}^{\sigma_0,r}} \leq \|\mu^0\|_{\G_{\al+\ep}^{\sigma_0,r}} < \infty.
\end{equation}
Similarly, for any $t_2\geq t_1\geq 0$,
\begin{equation}
\|\mu^{t_2}\|_{\G_{\phi^{t_2}+\ep}^{\sigma_0,r}} = \lim_{\ep'\rightarrow \ep^-} \|\mu^{t_2} \|_{\G_{\phi^{t_2}+\ep'}^{\sigma_0,r}}\leq \lim_{\ep'\rightarrow \ep^-} \|\mu^{t_1} \|_{\G_{\phi^{t_1}+\ep'}^{\sigma_0,r}} = \|\mu^{t_1} \|_{\G_{\phi^{t_1}+\ep}^{\sigma_0,r}},
\end{equation}
where the inequality is strict if $t_2>t_1$. This completes the proof of \cref{thm:main} in the case $\ga\leq 1$.
\end{proof}

\medskip
Let us now present the case $\ga>1$, the strategy for which is similar to before. Fix $\ep>0$ and suppose that $\mu^0\in \X_{\al+\ep,\ga}^{\sigma_{0,r},\sigma_{0,q},r,\frac{2q}{q-1}}$ for $\sigma_{0,r},\sigma_{0,q}$ above the regularity thresholds $\ka_{0,r},\ka_{0,q}$ given by \cref{prop:mon} and $1<q<\frac{d}{\ga-1}$. Let us drop the subscript $\ga$ from the notation $\X_{\al+\ep,\ga}$, as $\ga>1$ is fixed. In what follows, we assume that the parameters $d,\ga,r,q,s,\sigma_{0,r},\sigma_{0,q},\al,\be,\nu$ satisfy all the constraints of \cref{thm:main}. We also assume that
\begin{equation}\label{eq:qsigma0tz}
\|\mu^0\|_{\X_{\al+\ep}^{\sigma_{0,r},\sigma_{0,q}, r,\frac{2q}{q-1}}} < \frac{\nu^2-2\beta}{\max\{C_{mon,r},C_{mon,q}\}|\M|},
\end{equation}
where $C_{mon,r}=C_r,C_{mon,q}=C_q>0$ are the constants from \cref{prop:mon}. Assuming a realization of $W$ from $\Omega_{\al,\be,\nu}$, \cref{prop:lwp} implies that given $r\geq 1$ sufficiently small depending on $d,s,\ga$ and $0<\sigma<\frac{2s-1}{s}$ sufficiently large depending on $d,\ga,s,r$, there is a maximal solution $\mu$ to the Cauchy problem \eqref{eq:cp} with lifespan $[0,T_{\max,\sigma,\ep})$, such that $\mu$ belongs to $C_T^0\X_{\phi+\ep}^{\sigma,0,r,\frac{2q}{q-1}}$ for any $0\leq T<T_{\max,\sigma,\ep}$. Here, we recycle the notation $T_{\max,\ep,\sigma}$ used above. Analogous to \cref{lem:lspan}, we have the following relation for the maximal lifespans as we decrease $\ep$.

\begin{lemma}\label{lem:qlspan}
Let $\mu$ be as above. There exists a constant $C>0$ depending on $d,\ga,r,q,s,\sigma,\be,\nu$ such that for any $0\leq \ep_2<\ep_1\leq \ep$, the maximal times of existence $T_{\max,\sigma,\ep_1}, T_{\max,\sigma,\ep_2}$ of $\mu^t$ as taking values in $\X_{\phi^t+\ep_1}^{\sigma,0,r,\frac{2q}{q-1}}, \X_{\phi^t+\ep_2}^{\sigma,0,r,\frac{2q}{q-1}}$, respectively, satisfy the inequality
\begin{equation}\label{eq:qTmaxeplb}
T_{\max,\sigma,\ep_2} \geq T_{\max,\sigma,\ep_1} + C(|\M|\|\mu^0\|_{\X_{\al+\ep}^{\sigma_{0,r},\sigma_{0,q},r,\frac{2q}{q-1}}})^{-\frac{2s}{2s-\sigma s-1}}.
\end{equation}
\end{lemma}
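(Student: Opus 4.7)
The plan is to mirror the proof of \cref{lem:lspan}, but now carrying two Sobolev indices through the argument and invoking the joint form of \cref{prop:mon} at the end. Fix $0\leq \ep_2 < \ep_1\leq \ep$. For any $T< T_{\max,\sigma,\ep_1}$, we have $\mu\in C_T^0\X_{\phi+\ep_1}^{\sigma,0,r,\frac{2q}{q-1}}$ by the definition of $T_{\max,\sigma,\ep_1}$. The first step is to apply \cref{lem:Gemb} twice (once in each component of $\X$), using the strict inequality $\ep_2<\ep_1$ to trade shrinkage of the Gevrey index for an increase in Sobolev regularity, so that $\mu$ lies in $C_T^0\X_{\phi+\ep_2}^{\sigma_r',\sigma_q',r,\frac{2q}{q-1}}$ for any $\sigma_r'\geq \sigma$ and $\sigma_q'\geq 0$, with quantitative bounds carrying factors of $\frac{\lceil\sigma_r'-\sigma\rceil!}{(\ep_1-\ep_2)^{\lceil\sigma_r'-\sigma\rceil}}$ and $\frac{\lceil\sigma_q'\rceil!}{(\ep_1-\ep_2)^{\lceil\sigma_q'\rceil}}$.

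Next, I would fix specific values $\sigma_r', \sigma_q'$ subject to the constraints
\begin{equation}
\sigma\leq \sigma_r',\quad \ka_{0,r}<\sigma_r',\quad \sigma_r'+\tfrac{2}{r}\leq \sigma_{0,r},\quad \ka_{0,q}<\sigma_q',\quad \sigma_q'+\tfrac{q-1}{q}\leq \sigma_{0,q},
\end{equation}
all of which can be satisfied by taking the initial Sobolev indices $\sigma_{0,r},\sigma_{0,q}$ large enough (which is already allowed by hypothesis). By the previous embedding step, $\mu\in C_T^0\X_{\phi+\ep_2}^{\sigma_r'+2/r,\sigma_q'+(q-1)/q,r,\frac{2q}{q-1}}$, which is precisely the regularity required by the two-norm statement of \cref{prop:mon}. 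Combined with the smallness assumption \eqref{eq:qsigma0tz} (which, by trivial embedding $\|\cdot\|_{\X_{\al+\ep_2}^{\sigma_r',\sigma_q',r,\frac{2q}{q-1}}}\leq \|\cdot\|_{\X_{\al+\ep}^{\sigma_{0,r},\sigma_{0,q},r,\frac{2q}{q-1}}}$, implies the smallness hypothesis of \cref{prop:mon} at the level $(\sigma_r',\sigma_q')$ once we absorb the $\sigma$-dependence of $C_{mon,r},C_{mon,q}$ into the choice of $\sigma_{0,r},\sigma_{0,q}$), \cref{prop:mon} yields the monotone decrease of both $\|\mu^t\|_{\G_{\phi^t+\ep_2}^{\sigma_r',r}}$ and $\|\mu^t\|_{\G_{\phi^t+\ep_2}^{\sigma_q',\frac{2q}{q-1}}}$ on $[0,T]$.

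From this monotonicity plus the trivial embeddings $\|\cdot\|_{\hat{W}^{\sigma s,r}}\leq \|\cdot\|_{\hat{W}^{\sigma_r' s,r}}$ and $\|\cdot\|_{\hat{L}^{2q/(q-1)}}\leq \|\cdot\|_{\hat{W}^{\sigma_q' s,2q/(q-1)}}$, we obtain the key uniform bound
\begin{equation}
\sup_{T< T_{\max,\sigma,\ep_1}}\|\mu\|_{C_T^0\X_{\phi+\ep_2}^{\sigma,0,r,\frac{2q}{q-1}}} \leq \|\mu^0\|_{\X_{\al+\ep}^{\sigma_{0,r},\sigma_{0,q},r,\frac{2q}{q-1}}} \eqqcolon R.
\end{equation}
Denote by $C_{lwp}>0$ the local well-posedness constant from \cref{prop:lwp} at the regularity level $\sigma$. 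Pick $T_* < T_{\max,\sigma,\ep_1}$ with $T_{\max,\sigma,\ep_1} - T_* \leq \tfrac{1}{2} C_{lwp}(|\M|R)^{-\frac{2s}{2s-\sigma s-1}}$; since $\|\mu^{T_*}\|_{\X_{\phi^{T_*}+\ep_2}^{\sigma,0,r,\frac{2q}{q-1}}}\leq R$, applying \cref{prop:lwp} with initial time translated to $T_*$ and initial datum $\mu^{T_*}$ extends $\mu$ in $\X_{\phi+\ep_2}^{\sigma,0,r,\frac{2q}{q-1}}$ by an additional time at least $C_{lwp}(|\M|R)^{-\frac{2s}{2s-\sigma s-1}}$. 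Hence $T_{\max,\sigma,\ep_2}\geq T_* + C_{lwp}(|\M|R)^{-\frac{2s}{2s-\sigma s-1}} \geq T_{\max,\sigma,\ep_1} + \tfrac{1}{2}C_{lwp}(|\M|R)^{-\frac{2s}{2s-\sigma s-1}}$, which is \eqref{eq:qTmaxeplb} with $C=C_{lwp}/2$.

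The main obstacle, compared to \cref{lem:lspan}, is the bookkeeping needed to run the monotonicity argument simultaneously in the two pieces of $\X$: one must choose a pair $(\sigma_r',\sigma_q')$ that lies above both regularity thresholds $\ka_{0,r},\ka_{0,q}$ yet stays below $(\sigma_{0,r}-\tfrac{2}{r},\sigma_{0,q}-\tfrac{q-1}{q})$, and one must verify that the smallness hypothesis \eqref{eq:qsigma0tz} is strong enough to trigger \cref{prop:mon} at this pair (absorbing the dependence of $C_r,C_q$ on the auxiliary indices into the choice of $\sigma_{0,r},\sigma_{0,q}$). Once this coordination is set, the rest of the argument is a direct adaptation of the single-index case.
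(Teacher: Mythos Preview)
Your proposal is correct and follows essentially the same route as the paper's proof: trade Gevrey index for Sobolev regularity via \cref{lem:Gemb}, invoke \cref{prop:mon} at an auxiliary pair $(\sigma_r',\sigma_q')$ above the thresholds to get monotonicity, deduce a uniform bound on $\|\mu\|_{C_T^0\X_{\phi+\ep_2}^{\sigma,0,r,\frac{2q}{q-1}}}$ by $\|\mu^0\|_{\X_{\al+\ep}^{\sigma_{0,r},\sigma_{0,q},r,\frac{2q}{q-1}}}$, and relaunch the local theory near $T_{\max,\sigma,\ep_1}$. One minor point: your extra constraints $\sigma_r'+\tfrac{2}{r}\leq \sigma_{0,r}$ and $\sigma_q'+\tfrac{q-1}{q}\leq \sigma_{0,q}$ are not needed, since \cref{lem:Gemb} already yields $\mu\in C_T^0\X_{\phi+\ep_2}^{\sigma_r'+2/r,\sigma_q'+(q-1)/q,r,\frac{2q}{q-1}}$ for \emph{any} choice of $\sigma_r',\sigma_q'$ (the paper only imposes $\sigma_r'\leq\sigma_{0,r}$, $\sigma_q'\leq\sigma_{0,q}$ so that the smallness hypothesis transfers); but this over-caution is harmless.
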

\begin{proof}
Fix $0<\ep_2 <\ep_1\leq \ep$. For any $\sigma_r \geq\sigma$ and $\sigma_q \geq 0$, it follows from \cref{lem:Gemb} that $\mu$ is also a solution in $C_T^0\X_{\phi+\ep_2}^{\sigma_r,\sigma_q,r,\frac{2q}{q-1}}$ for any $0\leq T<T_{\max,\sigma,\ep_1}$. Furthermore, we have the quantitative bounds
\begin{align}
\|\mu^t\|_{\G_{\phi^t+\ep_2}^{\sigma_r,r}} \leq \frac{\lceil{\sigma_r-\sigma}\rceil !}{(\ep_1-\ep_2)^{\lceil{\sigma_r-\sigma}\rceil}} \|\mu^t\|_{\G_{\phi^t+\ep_1}^{\sigma_r,r}} \qquad \forall \ 0\leq t<T_{\max,\sigma,\ep_1},\\
\|\mu^t\|_{\G_{\phi^t+\ep_2}^{\sigma_q,\frac{2q}{q-1}}} \leq \frac{\lceil{\sigma_q}\rceil !}{(\ep_1-\ep_2)^{\lceil{\sigma_q}\rceil}} \|\mu^t\|_{\G_{\phi^t+\ep_1}^{0,\frac{2q}{q-1}}} \qquad \forall \ 0\leq t<T_{\max,\sigma,\ep_1},
\end{align}
which of course imply that $\|\mu\|_{C_T^0\X_{\phi+\ep_2}^{\sigma_r,\sigma_q,r,\frac{2q}{q-1}}}$ is finite on any compact subinterval of $[0,T_{\max,\sigma,\ep_1})$. Choose $\sigma_r,\sigma_q$ such that
\begin{equation}
\sigma_{0,r} \geq \sigma_r > \ka_{0,r} \quad \text{and} \quad \sigma_{0,q} \geq \sigma_q > \ka_{0,q},
\end{equation}
where $\ka_{0,r},\ka_{0,q}$ are the regularity thresholds of \cref{prop:mon}. Using the assumption \eqref{eq:sigma0tz}, we can then apply \cref{prop:mon} to conclude that the function $t\mapsto \|\mu^t\|_{\X_{\phi^t+\ep_2}^{\sigma_r,\sigma_q,r,\frac{2q}{q-1}}}$ is strictly decreasing on the interval $[0,T]$ for any $T<T_{\max,\sigma,\ep_1}$. In particular, since we can ensure $\sigma\leq \sigma_r$ and $0\leq \sigma_q$, it follows from this monotonicity and \cref{lem:Gemb} that
\begin{equation}
\|\mu\|_{C_T^0\X_{\phi+\ep_2}^{\sigma,0,r,\frac{2q}{q-1}}} \leq \|\mu\|_{C_T^0\X_{\phi+\ep_2}^{\sigma_r,\sigma_q,r,\frac{2q}{q-1}}} \leq \|\mu^0\|_{\X_{\al+\ep_2}^{\sigma_r,\sigma_q,r,\frac{2q}{q-1}}} \leq \|\mu^0\|_{\X_{\al+\ep}^{\sigma_{0,r},\sigma_{0,q},r,\frac{2q}{q-1}}}
\end{equation}
for all $0\leq T<T_{\max,\sigma,\ep_1}$. Let $C_{lwp,\sigma}$ be the constant from \cref{prop:lwp}, and choose $T_* < T_{\max,\sigma,\ep_1}$ so that
\begin{equation}
T_{\max,\sigma,\ep_1}-T_* \leq \frac{C_{lwp,\sigma}(|\M| \|\mu^0\|_{\X_{\al+\ep}^{\sigma_{0,r},\sigma_{0,q},r,\frac{2q}{q-1}}})^{-\frac{2s}{2s-\sigma s-1}}}{2}.
\end{equation}
Then by the same argument as in the proof of \cref{lem:lspan}, we see that $\mu \in C_{T_0}^0\X_{\phi+\ep_2}^{\sigma,0,r,\frac{2q}{q-1}}$, where
\begin{equation}
T_0 \geq T_{\max,\sigma,\ep_1} + \frac{C_{lwp,\sigma}(|\M| \|\mu^0\|_{\X_{\al+\ep}^{\sigma_{0,r},\sigma_{0,q},r,\frac{2q}{q-1}}})^{-\frac{2s}{2s-\sigma s-1}}}{2},
\end{equation}
which completes the proof of the lemma.
\end{proof}

\begin{proof}[Proof of \cref{thm:main} for $\ga>1$]
Fix $0<\ep'<\ep$. Let $\sigma,\sigma_{0,r},\sigma_{0,q}$ be as above. If $T_{\max, \sigma, \ep'}<\infty$, then choosing $n\in\N$ such that
\begin{equation}
n C(|\M|\|\mu^0\|_{\X_{\al+\ep}^{\sigma_{0,r},\sigma_{0,q},r,\frac{2q}{q-1}}})^{-\frac{2s}{2s-\sigma s-1}} > T_{\max,\sigma,\ep'}-T_{\max,\sigma,\ep},
\end{equation}
where $C$ is the same constant as in the inequality \eqref{eq:qTmaxeplb}, one can use the same argument from above, except now invoking \cref{lem:qlspan}, to derive a contradiction. Thus, $T_{\max,\sigma,\ep'}=\infty$.

We have shown that for any $0<\ep'<\ep$ and all $0<\sigma<\frac{2s-1}{s}$ sufficiently large depending on $d,s,\ga,r$, it holds that $\|\mu\|_{C_T^0\X_{\phi+\ep'}^{\sigma,0,r,\frac{2q}{q-1}}}<\infty$ for all $T>0$. By the arbitrariness of $\ep'$,  \cref{lem:Gemb} implies for any $T>0$, $\|\mu\|_{C_T^0\X_{\phi+\ep'}^{\sigma_{0,r}+\frac{2}{r}, \sigma_{0,q}+\frac{q-1}{q},r,\frac{2q}{q-1}}} < \infty$. Using initial datum assumption \eqref{eq:qsigma0tz}, we can apply \cref{prop:mon} on the interval $[0,T]$ to obtain that the function $t\mapsto \|\mu^t\|_{\X_{\phi^t+\ep'}^{\sigma_{0,r},\sigma_{0,q},r,\frac{2q}{q-1}}}$ is strictly decreasing on $[0,T]$. Since $T>0$ was arbitrary, we see that this monotonicity property holds on the entire interval $[0,\infty)$. Using the same Fatou argument from before, we complete the proof in the $\ga>1$ case.
\end{proof}

\bibliographystyle{alpha}
\bibliography{RD}
\end{document}